\documentclass[11pt,leqno,letterpaper]{article}

\usepackage[svgnames]{xcolor}
\usepackage{setspace}
\usepackage{abstract}
\usepackage[normalem]{ulem}
\usepackage{mathtools}
\usepackage[abbrev,lite,nobysame]{amsrefs}
\usepackage{amsthm,amsfonts}
\usepackage{thmtools}
\usepackage{csquotes}
\usepackage{enumerate}
\usepackage{bbm}
\usepackage{dsfont}
\usepackage{marvosym}
\usepackage{tikz}
\usetikzlibrary{patterns}

\usepackage[normalem]{ulem}
\newcommand{\stkout}[1]{\ifmmode\text{\sout{\ensuremath{#1}}}\else\sout{#1}\fi}

\usepackage[looser]{newtxtext}
\usepackage[bigdelims,varvw]{newtxmath}
\usepackage[protrusion=true, tracking=true, kerning=true,spacing=true]{microtype}
\usepackage{enumitem}

\usepackage[colorlinks=true,linkcolor=NavyBlue,
citecolor=NavyBlue,urlcolor=NavyBlue,bookmarksdepth=3,hypertexnames=false]{hyperref}
\usepackage{cleveref}
\crefname{equation}{}{}

\numberwithin{equation}{section}

\theoremstyle{plain}
\newtheorem{theorem}{Theorem}[section]
\newtheorem{lemma}[theorem]{Lemma}
\newtheorem{corollary}[theorem]{Corollary}
\newtheorem{proposition}[theorem]{Proposition}
\newtheorem{example}[theorem]{Example}

\theoremstyle{remark}
\newtheorem{remark}[theorem]{Remark}
\theoremstyle{definition}
\newtheorem{definition}[theorem]{Definition}

\renewcommand{\leq}{\leqslant}
\renewcommand{\le}{\leqslant}
\renewcommand{\ge}{\geqslant}
\renewcommand{\geq}{\geqslant}
\renewcommand{\mathcal}{\mathscr}
\renewcommand{\tilde}{\widetilde}

\newcommand\tfootnote[1]{%
	\begingroup
	\renewcommand\thefootnote{}\footnote{#1}%
	\addtocounter{footnote}{-1}%
	\endgroup
}

\font\bosy=cmbsy10
\def\conc{\hbox{\bosy \char '175}}

\newcommand\ICONT[1]{\mathop{\displaystyle \mathop{\conc}_{#1}}}
\newcommand{\sct}{\scriptstyle}
\def\di#1#2{\substack{\sct#1 \\ \noalign{\vskip-2mm}\\ \scriptstyle#2}}

\begin{document}\linespread{1.05}\selectfont
	\date{}

	\author{Sorin~Micu \protect\footnote{ Department of Mathematics, University of Craiova, 13, Al.I. Cuza Street, Craiova, 200585 and Gheorghe Mihoc-Caius Iacob Institute of Mathematical Statistics and Applied Mathematics of the Romanian Academy, Calea 13 Septembrie, No. 13, Bucharest, 050711, Romania,  e-mail: \href{mailto:sd_micu@yahoo.com}{sd$\_$micu@yahoo.com}} \and Ionel~Roven\c ta \protect\footnote{Department of Mathematics, University of Craiova, Craiova, e-mail: \href{mailto:ionelroventa@yahoo.com}{ionelroventa@yahoo.com}} \and Marius~Tucsnak  \protect\footnote{Institut de Math\'ematiques de Bordeaux, UMR 5251, Universit\'{e} de Bordeaux/Bordeaux INP/CNRS, 351 Cours de la Lib\'eration - F 33 405 TALENCE, France, and Institut Universitaire de France (IUF), e-mail: \href{mailto:marius.tucsnak@u-bordeaux.fr}{marius.tucsnak@u-bordeaux.fr}}}

	\title{Bergman-space regularity for the heat equation with white-noise boundary forcing}
	
	\maketitle
	
	\tfootnote{{{\bf MSC2020}:  	35R60, 93B03, 60H15} {\bf Keywords}: heat equation, white noise boundary inputs, Bergman spaces, reachable space, operator semigroups, well posed linear control systems}

\begin{abstract}
We introduce a Bergman-space framework for the study of boundary-forced heat equations and show that, in the one-dimensional case, boundary white noise gives rise to a sharp holomorphic regularity phenomenon. More precisely, we consider the heat equation on a bounded interval with Dirichlet or Neumann boundary conditions driven by independent white noises at the endpoints, and we prove that for every positive time the corresponding state extends holomorphically to a rhombus in the complex plane having the original interval as one of its diagonals. Moreover, the resulting process admits a continuous version with values in a scale of weighted Bergman spaces on that rhombus, depending on two parameters $\delta\in(0,1)$ and $\Theta\in\left(0,\frac{\pi}{4}\right)$.

To our knowledge, this is the first systematic use of Bergman spaces as state spaces for parabolic equations with stochastic boundary forcing. We also prove that the result is optimal, in the sense that the conclusion fails at the critical values $\delta=0$ and $\Theta=\frac{\pi}{4}$.
\end{abstract}

\newpage
	
\setcounter{tocdepth}{3}	
\tableofcontents

\section{Introduction}\label{true_introduction}

This paper proposes a new functional-analytic point of view on boundary-forced heat equations, based on Bergman spaces on complex domains naturally associated with the spatial interval.  
Our main observation is that, for the one-dimensional heat equation driven by independent white noises at the endpoints, the appropriate state-space description  is not merely Sobolev in nature: it is in fact holomorphic.  
In this sense, the present work builds a bridge between areas which are usually treated separately, namely the analysis of parabolic equations with boundary inputs, the theory of Bergman spaces, and stochastic evolution equations.

More precisely, we show that when the heat equation on a bounded interval is driven at the boundary by white noise, the resulting stochastic evolution still exhibits a strong complex-analytic regularity.  
For every positive time, the solution extends holomorphically to a rhombus in the complex plane having the underlying interval as one of its diagonals, and its trajectories are continuous in time with values in suitable weighted Bergman spaces on that rhombus.  
Thus, although boundary white-noise forcing is usually treated in rough Sobolev-type spaces, the dynamics of the one-dimensional heat equation in fact selects a much finer and intrinsically complex-analytic state space.

The novelty of our results is therefore twofold.  
On the one hand, they provide a sharp regularity statement for stochastic heat equations with noisy boundary data.  
On the other hand, and more importantly from the analytic point of view, they show that Bergman spaces arise naturally as state spaces for boundary-forced parabolic equations with stochastic inputs.  
To our knowledge, these are the first results establishing such a connection in a systematic way.

We informally describe below our main result in the case of Dirichlet boundary conditions, whereas the precise statements for both Dirichlet and Neumann boundary conditions are provided in \Cref{sec_main_not}. 

Our main example is the system:
\begin{equation}\label{PROBHEAT1_stoch}
   \left\{ \begin{array}{lr}\dfrac{\partial \psi}{\partial t}(t,x)
   = \dfrac{\partial^2 \psi}{\partial x^2}(t,x)& \qquad(t\geqslant 0,\
   x\in (0,\pi)),\\ \ &\ \\ \psi(t,0)=  {\dot W}^0_t,
   \ \ \psi\left(t,\pi\right)=  {\dot W}_t^\pi & t\in[ 0,\infty),\\ \ &\ \\ \psi(0,x)
   = \psi_0(x)\ \ \ \ \ \ & x\in \left(0,\pi\right). \end{array}\right.
\end{equation}
In the above equations $\psi$ stands for the state trajectory of the system and $\{W^i_t ,\ t \geqslant 0\}$, with $i \in \{ 1, 2\}$, are independent standard real Wiener processes.

Classical well-posedness results for \eqref{PROBHEAT1_stoch}, going back to Da Prato and Zabczyk \cite{DAP_Z_Main}, are formulated in rough spaces, such as Sobolev spaces of negative order.  
From that perspective, stochastic boundary forcing appears to destroy the strong spatial regularity properties usually associated with parabolic equations.  
Our purpose is to show that, at least in one space dimension, this picture is incomplete: beyond the rough Sobolev framework, the evolution actually takes place in a much finer scale of spaces of Bergman type.

Related boundary-noise problems on the half-line were analyzed in Al\`os and Bonaccorsi \cites{MR1953738,MR1899108} and in Brze\'zniak et al.\ \cite{MR3315663}, while Goldys and Peszat \cite{MR4561684} treated more general parabolic equations on bounded domains.  
A common feature of these works is that well posedness is obtained in rough state spaces, such as negative-order Sobolev spaces or weighted $L^p$ spaces.  
In contrast, the present work identifies weighted Bergman spaces on suitable rhombi as the natural state spaces for the one-dimensional heat equation with stochastic boundary forcing.

The starting point of our analysis is a deterministic analogy.  
Consider the boundary-controlled heat equation
\begin{equation}\label{PROBHEAT1}
   \left\{ \begin{array}{lr}\dfrac{\partial z}{\partial t}(t,x)
   = \dfrac{\partial^2 z}{\partial x^2}(t,x)& t\geqslant 0,\
   x\in (0,\pi),\\ \ &\ \\ z(t,0)=  u_0(t),
   \ \ z\left(t,\pi\right)=  u_\pi(t) & t\in[ 0,\infty),\\ \ &\ \\ 
   z(0,x)
= 0\ \ \ \ \ \ & x\in \left(0,\pi\right), \end{array}\right.
\end{equation}
which models the heat propagation in a rod
of length $\pi$, controlled by prescribing the temperature at both ends.
It is well known, see, for instance, \cite[Theorem 10.4.1]{MR747979}, that for all $u_0,\ u_\pi\in L^2[0,\infty)$ and every $\tau \geqslant 0$ the map
$x\mapsto z(\tau,x)$ extends holomorphically to a square in the complex plane having $[0,\pi]$ as one of its diagonals.  

More recent results by Ervedoza, Le Balc'h and Tucsnak \cite{MR4474841} and by Hartmann and Orsoni \cite{hartmann2021separation} showed that this holomorphic extension phenomenon is not merely a byproduct of parabolic smoothing, but in fact provides the correct state-space description of the deterministic system.  
More precisely, let $D_\frac{\pi}{4}$ be the square in the complex plane having the segment $[0,\pi]$ as one of its diagonals (the apparently strange notation $D_\frac{\pi}{4}$ will be explained below).  
Consider the Bergman space $A^2(D_{\frac{\pi}{4}})$, which is formed by the functions holomorphic and square integrable on $D_{\frac{\pi}{4}}$. Then, combining results from
\cite{MR4474841} and  \cite{hartmann2021separation}, it follows that $A^2(D_{\frac{\pi}{4}})$ is the smallest Hilbert space such that for all $L^2$ inputs $u_0$, $u_\pi$ the solution of \eqref{PROBHEAT1} is continuous in time with values in that space.

These deterministic results suggest a natural question:  
\emph{does this Bergman-space picture survive after passing from deterministic controls to stochastic boundary inputs?}  
Equivalently, can one still describe the trajectories of \eqref{PROBHEAT1_stoch} in terms of canonical holomorphic state spaces, and if so, what is the sharp scale of Bergman spaces selected by the noise?

In this paper we give a positive answer, which also reveals a new rigidity phenomenon.  
In the stochastic case, the relevant holomorphic space is no longer the unweighted Bergman space on the maximal square $D_\frac{\pi}{4}$, but rather a scale, depending on two parameters, of weighted Bergman spaces on smaller rhombi.

To state this more concretely, let $\Theta\in(0,\pi/4]$ and let $D_\Theta$ be the rhombus 
in the complex plane having $[0,\pi]$ as one of its diagonals and an angle of measure $2\Theta$
at its vertex $s=0$, see Figure \ref{fig_unic} below.  
For $\delta\geqslant 0$ we introduce the weight
\begin{equation}\label{def_rho_delta}
 \rho_\delta(s)=|s|^\delta \left|\pi-s\right|^\delta \qquad\qquad(\delta\geqslant 0,\, \, s\in D_\Theta),    
\end{equation}
and we denote by $A_\delta^2(D_\Theta)$  the corresponding weighted Bergman space.  

We defer the precise definitions to the next section and state here the main conclusion for the system \eqref{PROBHEAT1_stoch} in the informal form:

\medskip

\noindent
\emph{For every $\Theta\in\left(0,\frac{\pi}{4}\right)$ and every $\delta\in(0,1)$, the solution of
\eqref{PROBHEAT1_stoch} admits a version continuous in time with values in $A_\delta^2(D_\Theta)$.  
Moreover, this result is sharp: in general one cannot take $\delta=0$, nor can one take the critical angle $\Theta=\frac{\pi}{4}$.}

\medskip

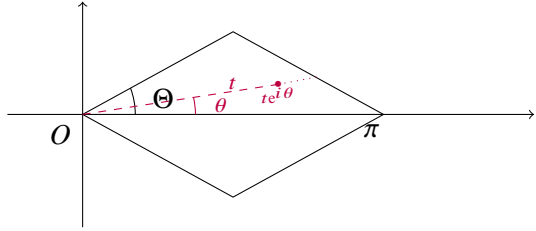
\begin{figure}[h!]
\centering
\begin{tikzpicture}
\draw[->] (-1,0) -- (6,0);
\draw [->] (0,-1.5) -- (0,1.5);
\draw (0, 0) node[below left] {$O$};
\draw (3.6, 0) node[below right] {$\pi$};
%
\draw (0.7,0) arc (0:20.5:1) ;
\draw (0.8,-0.05) node[above right] {$\Theta$};
\draw (0,0) -- (2, 1.1) -- (4, 0) -- (2, -1.1) -- (0, 0) ;
\draw [purple, dashed] (0,0) -- (2.6,0.4);
\draw [purple] (2.6,0.4) node {$\scriptscriptstyle{\bullet}$};
\draw [purple] (2.6,0.5) node[below] {$\scriptscriptstyle{t {\rm e}^{i \theta}}$};
\draw [purple] (1.5,0) arc (0:13:1) ;
\draw [purple] (1.6,-0.1) node [above right] {$\scriptstyle{\theta}$};
\draw [purple] (2,0.2) node [above] {$\scriptstyle{t}$};
\draw [purple, dotted] (2.6,0.4) --  (3.12, 1.6/26*3.12+8/26);
\end{tikzpicture}
\caption{Picture of $D_\Theta$}
\label{fig_unic}
\end{figure}

The above result shows that the appropriate regularity theory for \eqref{PROBHEAT1_stoch} is not merely Sobolev, but genuinely complex-analytic.  
In particular, it singles out a sharp scale of weighted Bergman spaces as the natural state spaces for the stochastic evolution.  
This point of view appears to be new even for the one-dimensional heat equation, and it reveals an unexpected robustness of the holomorphic structure previously observed in deterministic reachable-space theory.

The proof combines recent advances on reachability theory for boundary control systems with tools from Bergman-space theory, thereby extending to the stochastic setting the reachable-space perspective previously developed for deterministic heat equations. Some of the intermediate steps of our main proofs yield results which might be of independent interest,
presenting similarities with recent contributions in  Ervedoza and Tendani  \cite{ervedoza2025reachablespaceparabolicequations}. We think in particular of \Cref{th_hol_tilde} and \Cref{th_hol_tilde_neu}, where we show that the one-dimensional heat equation, with Dirichlet or Neumann homogeneous boundary conditions, is well posed in spaces of Bergman type on the rhombus $D_\Theta$ introduced above.


\section{Notation and roadmap to the main results}\label{sec_main_not}
\subsection{Notation and statement of the main results}

We begin by formalizing our notation for the Bergman spaces, more informally introduced in the previous section. Firstly, given $\Theta\in \left(0,\frac{\pi}{2}\right)$, we define
\begin{equation}\label{rtheta}
D_\Theta
:=
\left\{ 
\lambda \in \mathbb{C}; 
\lambda =  t {\rm e}^{i \theta}
\text{ with }
| \theta | < \Theta 
\text{ and }
t \in 
\left[
0, 
r_\theta
:=
\frac{\pi \sin(| \Theta|)}
{\sin( | \theta| + | \Theta|)}
\right]
\right\},
\end{equation}
(see Figure \ref{fig_unic}). 
In other words, \(D_\Theta\) is the rhombus of
vertices
\[
w_1=0,\ w_2=\frac{\pi}{2}\left(1-i\tan\Theta\right),\ w_3=\pi,\ w_4=\frac{\pi}{2}\left(1+i\tan\Theta\right). 
\]

Remark that for $\Theta=\frac{\pi}{4}$ we retrieve the square $D_\frac{\pi}{4}$ already introduced above.
For every $\Theta\in \left(0,\frac{\pi}{2}\right)$ and $\delta\geqslant 0$ we denote by
$A^2_\delta \left(D_\Theta\right)$ the weighted Bergman space given by 
\begin{equation}\label{fara_rho}
A^2_\delta\left(D_\Theta\right)
:=
L^2_\delta\left(D_\Theta\right) 
\cap 
{\rm Hol}(  D_\Theta),
\end{equation}
where ${\rm Hol}(  D_\Theta)$ is the space of holomorphic function on $D_\Theta$ and
\begin{equation*}
L_\delta^2 \left(D_\Theta\right)=\left\{g\in L^2_{\rm loc} \left(D_\Theta\right)\ \ |\ \ 
\int_{D_\Theta} | g(s)|^2 \,\rho_\delta(s) \, {\rm d} A(s) <\infty\right\},
\end{equation*}
where the weight $\rho_\delta$ has been introduced in \eqref{def_rho_delta}. Here and in the sequel we write ${\rm d}A(s)$ for the two-dimensional Lebesgue measure. 

\begin{remark}\label{rem_ident}
 In the whole remaining part of this work, we systematically identify a function holomorphic on the rhombus $D_\Theta$ with its trace on the interval $(0,\pi)$. Consequently, the assertion $f\in A_\delta^2(D_\Theta)$
 should be understood: $f$ is an analytic function on $(0,\pi)$ which admits a holomorphic extension  on $D_\Theta$, still denoted by $f$, with 
 \[
 \int_{D_\Theta} |  f(s)|^2 \,\rho_\delta(s) \, {\rm d} A(s) < \infty.
 \]
 Note that $A^2_\delta\left(D_\Theta\right)$ becomes a Hilbert space when endowed with the norm 
\[
\langle f,g \rangle_{A_\delta^2(D_\Theta)} =\int_{D_\Theta}  f(s) \overline{g(s)} \rho_\delta(s) \, {\rm d} A(s),
\]
For $\delta=0$ this space is simply denoted by $A^2 \left(D_\Theta\right)$.
\end{remark}

We also introduce, for every $\Theta\in \left(0,\frac{\pi}{2}\right)$ and every $\delta > 0$, the function space
\begin{equation}\label{Sob_Berg}
A_\delta^{1,2}(D_\Theta)=\left\{f\in A_\delta^2(D_\Theta)\ \ |\ \ f'\in A_\delta^2(D_\Theta)\right\},   
\end{equation}
which, when endowed with the inner product
\[
\langle f,g \rangle_{A_\delta^{1,2}(D_\Theta)} =\int_{D_\Theta} \left[ f(s) \overline{g(s)} +  f'(s) \overline{g'(s)}\right]\rho_\delta(s)\, {\rm d} A(s),
\]
is a Hilbert space. For $\delta=0$ this space is simply denoted by $A^{1,2}(D_\Theta)$.


Before precisely stating our main results, we note that a concept of mild solution with values in the negative order Sobolev space $W^{-1,2}(0,\pi)$ of  \eqref{PROBHEAT1_stoch} has been introduced in  \cite{DAP_Z_Main}, where the existence, uniqueness and continuity with respect to time of such solutions has been proved. Moreover, results from  \cites{MR1953738,MR1899108,MR3315663} indicate that similar results hold with the negative order Sobolev space $W^{-1,2}(0,\pi)$  replaced by a weighted $L^p$ space on $[0,\pi]$.  
By analogy with the deterministic case, we can expect that these solutions are continuous in time with values  in a space  of functions which can be extended holomorphically to some open subset $D$ of $\mathbb{C}$ with  $(0,\pi)\subset D$.  

 For the system \eqref{PROBHEAT1_stoch} our main result in this direction is:

\begin{theorem}\label{th_main}
  For every $\delta\in (0,1)$, $\Theta\in \left(0,\frac{\pi}{4}\right)$ and $\psi_0\in A_\delta^2(D_\Theta)$ the mild solution solution $\psi$
  of \eqref{PROBHEAT1_stoch} takes values in the space $A_\delta^2(D_\Theta)$ defined in \eqref{fara_rho}. Moreover, $\psi$ has a version which is continuous on $[0,\infty)$ with
  values in $A_\delta^2(D_\Theta)$. Finally, the above conclusion is sharp, at least relatively to the scale of Hilbert spaces 
  $$
  \left(A_\delta^2(D_\Theta)\right)_{\di{\delta\in (0,1)}
{\Theta\in (0,\pi/4)}}.
$$ 
More precisely, for \ $\Theta\in \left(0,\frac{\pi}{4}\right)$ \ the solution $\psi$
does not generally take values in $A^2(D_\Theta)$  and for $\delta\in (0,1)$ it does not take values in $A_\delta^2(D_\frac{\pi}{4})$. 
\end{theorem}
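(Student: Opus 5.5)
The plan is to split the mild solution as $\psi(t)=\mathbb{T}_t\psi_0+\Phi(t)$. Here $\mathbb{T}$ is the Dirichlet heat semigroup and
\[
\Phi(t)=\int_0^t \mathbb{T}_{t-\sigma}B\,{\rm d}W_\sigma ,
\]
with $B$ the Dirichlet boundary control operator and $W=(W^0,W^\pi)$. Each piece will be treated in a Hilbert space framework in which the state space is $A^2_\delta(D_\Theta)$.

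\emph{Step 1: the semigroup.} I first show that $\mathbb{T}$ restricts to a $C_0$-semigroup on $A^2_\delta(D_\Theta)$ when $\Theta<\pi/4$. This is the role of \Cref{th_hol_tilde}. I would prove it using the reflection representation of the Dirichlet heat kernel as a $2\pi$-periodic odd extension convolved with the Gaussian. For $s\in D_\Theta$ and $|\arg s|<\Theta<\pi/4$, the Gaussian $e^{-(s-y)^2/4t}$ is still decaying, because $\mathrm{Re}\,(s-y)^2>0$ in the relevant sector. This gives uniform boundedness and strong continuity in the weighted norm, and so the term $\mathbb{T}_t\psi_0$ is continuous in $A^2_\delta(D_\Theta)$.

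\emph{Step 2: the stochastic convolution.} By the Da Prato--Zabczyk theory of Gaussian stochastic convolutions in a Hilbert space $H$, $\Phi$ is $H$-valued with a continuous version provided that, for some $\alpha>0$,
\[
\int_0^T \sigma^{-2\alpha}\|\mathbb{T}_\sigma B\|_{HS(\mathbb{R}^2,H)}^2\,{\rm d}\sigma<\infty,
\]
which is the factorization method. Here $\mathbb{T}_\sigma B e_0$ is $-\partial_y$ of the heat kernel at $y=0$, i.e.
\[
k_\sigma(s)=\sum_n \tfrac{2}{\pi}\, n\, e^{-n^2\sigma}\sin(ns),
\]
and similarly at $\pi$. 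Up to smooth errors, $k_\sigma(s)\approx c\,\sigma^{-3/2} s\, e^{-s^2/4\sigma}$ near $s=0$, by the method of images. The key computation is
\[
\int_{D_\Theta}|k_\sigma(s)|^2|s|^\delta\,{\rm d}A(s)\approx \sigma^{-3}\int_0^{\Theta}\!\!\int_0^\infty r^{3+\delta}e^{-r^2\cos(2\theta)/2\sigma}\,{\rm d}r\,{\rm d}\theta\approx \sigma^{-1+\delta/2}\,(\cos 2\Theta)^{-(2+\delta)/2}.
\]
This is integrable in $\sigma$ near $0$, even after multiplication by $\sigma^{-2\alpha}$ with $\alpha<\delta/4$, exactly when $\delta>0$, and it is finite exactly when $\Theta<\pi/4$. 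The endpoint $\pi$ is handled symmetrically. The restriction $\delta<1$ presumably enters only through the admissibility statement and the condition $\psi_0\in A^2_\delta$, or through the pairing used in the definition of the mild solution. I would check this via the duality $W^{-1,2}\supset A^2_\delta$ and the identification of the $W^{-1,2}$ mild solution with the Bergman-valued one, which holds because the embedding is continuous and injective.

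\emph{Step 3: sharpness.} Take $\psi_0=0$. Then $\Phi(t)$ is a centered Gaussian variable, and $\Phi(t)\in H$ a.s. forces, by the converse for Gaussian measures (Fernique / Itô--Nisio), $\int_0^t\|\mathbb{T}_\sigma B\|_{HS(\mathbb{R}^2,H)}^2{\rm d}\sigma<\infty$. For $\delta=0$ the computation above gives $\sigma^{-1}$, which diverges logarithmically. For $\Theta=\pi/4$ the $\theta$-integral diverges because $\cos 2\theta\to0$ near the edge: at fixed $\sigma$ the kernel has $|k_\sigma(re^{i\pi/4})|\sim \sigma^{-3/2}r$ with no decay along the edge of the square. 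One must confirm that periodic images do not cancel this; near $s=0$ they are negligible.

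The main obstacle is making Step 2 rigorous on the whole rhombus, not just near $0$. This requires a uniform bound on $\sum_n n e^{-n^2\sigma}\sin(ns)$ for complex $s\in D_\Theta$, where the naive series grows like $e^{n|\mathrm{Im}\, s|}$. I would first try the image-sum representation and estimate each Gaussian term on $D_\Theta$ using $\mathrm{Re}\,(s-2k\pi)^2\geq c_\Theta|s-2k\pi|^2$ for $k\ne0$.
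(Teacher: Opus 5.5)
\textbf{The gap is in Step 1.} This is where the paper does most of its work, and the mechanism you give for it is false. Let $G_t(s,y)$ be the image-sum Dirichlet heat kernel, so that $\mathbb{T}_t\psi_0(s)=\int_0^\pi G_t(s,y)\psi_0(y)\,{\rm d}y$. For real $y\in(0,\pi)$ and $s=x+i\eta\in D_\Theta$ one has $\Re\,(s-y)^2=(x-y)^2-\eta^2$, which is negative for $y$ near $x$.

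The condition $|\arg s|<\Theta<\frac{\pi}{4}$ only controls Gaussians centred at the endpoints $y=0$ and $y=\pi$. That is why it works for your boundary kernel $k_\sigma$ in Step 2. It does not control $G_t(s,y)$ for general $y$. In fact $|G_t(x+i\eta,x)|\approx(4\pi t)^{-1/2}{\rm e}^{\eta^2/4t}$, so any bound obtained from the size of the kernel on the real segment blows up as $t\to0^+$. This must be so: $\mathbb{T}_t$ is not uniformly bounded from $L^2(0,\pi)$ into $A^2_\delta(D_\Theta)$, since otherwise every $L^2$ function would lie in $A^2_\delta(D_\Theta)$. Hence the uniform bound has to use the holomorphy of $\psi_0$.

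For each fixed $t>0$, boundedness of $\mathbb{T}_t$ on $A^2_\delta(D_\Theta)$ is cheap: use $R_{\Sigma_D}=A^2(D_{\frac{\pi}{4}})\subset A^2_\delta(D_\Theta)$ and the closed graph theorem, as in \Cref{rem_exp_abs}. The whole difficulty is $t\to0^+$. Without $\sup_{0<t\leqslant1}\|\mathbb{T}_t\|_{\mathcal{L}(A^2_\delta(D_\Theta))}<\infty$ and strong continuity, two things are missing:
\begin{itemize}
\item continuity of $t\mapsto\mathbb{T}_t\psi_0$ at $t=0$;
\item the hypothesis of the factorization method, which requires a $C_0$-semigroup on $H$.
\end{itemize}
So the continuous-version claim is not proved.

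\textbf{The missing idea.} Move the integration path through $s$, i.e. replace $[0,\pi]$ by $[0,s]\cup[s,\pi]$. On this path $\Re\,(s-w)^2>0$, because $\arg s$ and $\arg(\pi-s)$ both lie in $(-\Theta,\Theta)$. You then need weighted $L^2(D_\Theta,\rho_\delta)$ bounds for the resulting ray-wise integral operators (image terms included), uniformly in the parameter, and a dense class to get strong continuity. The paper does exactly this at the level of the resolvent:
\begin{itemize}
\item \eqref{eq:Rdef} is the path-deformed kernel;
\item Appendix I proves $\|\widetilde{\mathcal R}_\lambda\|\leqslant M/(1+|\lambda|)$ on $\Sigma_\varphi$ by a Schur test along rays;
\item \Cref{cor_imp} supplies the dense core.
\end{itemize}
Together these yield \Cref{th_hol_tilde}.

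\textbf{The rest of your plan is sound and is essentially the paper's route.}
\begin{itemize}
\item Your factorization condition with $\alpha<\delta/4$ is \eqref{good_form}.
\item Integrating $|k_\sigma|^2\rho_\delta$ first in $s$ and then in $\sigma$ is, by Tonelli, the paper's Parseval-in-time computation in \Cref{th_jap_bis}, \Cref{ceilalti} and \Cref{prop_sharp}. The image sum disposes of the $m\neq0$ terms everywhere on $D_{\frac{\pi}{4}}$, as you anticipate.
\item Identifying the Bergman-valued convolution with the $W^{-1,2}$-valued one via the continuous injective embedding of \Cref{cor_lem_intors} is a legitimate shortcut for \Cref{prop_restr}. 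That embedding is indeed where $\delta<1$ enters.
\end{itemize}

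\textbf{One correction to Step 3.} At $\Theta=\frac{\pi}{4}$ the square is bounded, so $\|\mathbb{T}_\sigma B\|^2_{HS}$ in $A^2_\delta(D_{\frac{\pi}{4}})$ is finite for every $\sigma>0$. It grows like $\sigma^{-2}$ as $\sigma\to0^+$, with its mass concentrating near the edges $\arg s=\pm\frac{\pi}{4}$. It is the time integral that diverges, not the fixed-$\sigma$ $\theta$-integral.
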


We also consider the analogue of \eqref{PROBHEAT1_stoch} with noise in the Neumann boundary data, i.e., the system

\begin{equation}\label{PROBHEAT1_stoch_neu}
   \left\{ \begin{array}{lr}\dfrac{\partial \psi}{\partial t}(t,x)
   = \dfrac{\partial^2 \psi}{\partial x^2}(t,x)& \qquad(t\geqslant 0,\
   x\in (0,\pi)),\\ \ &\ \\ \frac{\partial \psi}{\partial x}(t,0)=  {\dot W}^0_t,
   \ \ \frac{\partial \psi}{\partial x}\left(t,\pi\right)=  {\dot W}_t^\pi & t\in[ 0,\infty),\\ \ &\ \\ \psi(0,x)
   = \psi_0(x)\ \ \ \ \ \ & x\in \left(0,\pi\right), \end{array}\right.
\end{equation}
where $\{W^i_t ,\ t \geqslant 0\}$, with $i \in \{ 1, 2\}$, are independent standard real Wiener processes.

We recall that a concept of mild solution with values in $L^2[0,\pi]$ of  \eqref{PROBHEAT1_stoch_neu} has been introduced in  \cite{DAP_Z_Main}, where the existence, uniqueness and continuity with respect to time of such solutions has been proved. Again, we can expect that these solutions are continuous in time with values  in space formed of much more regular functions. Our main result on the system \eqref{PROBHEAT1_stoch_neu} is:

\begin{theorem}\label{th_main_neu}
  For every $\delta\in (0,1)$, $\Theta\in \left(0,\frac{\pi}{4}\right)$ and $\psi_0$ in the space $A_\delta^{1,2}\left(D_\Theta\right)$, introduced in \eqref{Sob_Berg}, the mild solution solution $\psi$
  of \eqref{PROBHEAT1_stoch_neu} takes values in $A_\delta^{1,2}\left(D_\Theta\right)$. Moreover, $\psi$ has a version continuous in time with values in $A_\delta^{1,2}\left(D_\Theta\right)$. Finally, the above conclusion is sharp, at least relatively to the scale of Hilbert spaces 
  $$
  \left(A_\delta^{1,2}(D_\Theta)\right)_{\di{\delta\in (0,1)}
{\Theta\in (0,\pi/4)}}.
$$ 
More precisely, for \ $\Theta\in \left(0,\frac{\pi}{4}\right)$ \ the solution $\psi$
does not generally take values in $A^{1,2}(D_\Theta)$ 
  and for $\delta\in (0,1)$ it does not take values in $A_\delta^{1,2}(D_\frac{\pi}{4})$.
\end{theorem}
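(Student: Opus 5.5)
The plan is to reduce \Cref{th_main_neu} to the Dirichlet result \Cref{th_main}, or more precisely to its proof, by differentiating in space. If $\psi$ solves \eqref{PROBHEAT1_stoch_neu}, then formally $\varphi=\partial_x\psi$ solves the heat equation \eqref{PROBHEAT1_stoch} with Dirichlet data $\dot W^0_t,\dot W^\pi_t$ and initial datum $\psi_0'\in A^2_\delta(D_\Theta)$.

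To make this rigorous, I would write the mild solution through the Neumann semigroup on $L^2(0,\pi)$ with cosine eigenbasis $\cos(nx)$. The relevant objects are
\[
\psi(t)=\mathbb{T}^N_t\psi_0+\int_0^t \mathbb{T}^N_{t-s}B_N\,{\rm d}W_s ,
\]
where the Neumann control operator $B_N$ pairs with $\cos(n\cdot)$ through the point values at $0$ and $\pi$. Differentiating term by term maps $\cos(nx)$ to $-n\sin(nx)$. The factor $n$ is exactly what the Dirichlet control operator produces in the sine basis, since $B_D^*\sin(n\cdot)=\pm n$. Hence $\partial_x\psi(t)$ coincides, coefficientwise, with the Dirichlet mild solution with initial datum $\psi_0'$. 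By \Cref{th_main}, this solution has a continuous version in $A^2_\delta(D_\Theta)$.

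It then remains to recover $\psi$ itself from its derivative. I would set
\[
\psi(t,s)=\psi(t,\pi/2)+\int_{\pi/2}^s \partial_x\psi(t,\sigma)\,{\rm d}\sigma ,
\]
and would need two facts:
\begin{enumerate}
\item an estimate $\|F\|_{A^2_\delta}\lesssim |F(\pi/2)|+\|F'\|_{A^2_\delta}$ on the rhombus, obtained by integrating along segments from the centre $\pi/2$; this works because $D_\Theta$ is convex and the weight is integrable near the vertices for $\delta<1$;
\item continuity of $t\mapsto\psi(t,\pi/2)$, which holds for the $L^2$ mild solution by interior parabolic smoothing. A cleaner route is to use the zero mode, since $\langle\psi(t),1\rangle=\langle\psi_0,1\rangle+W^\pi_t-W^0_t$ is continuous; I would then take the primitive normalized by its mean.
\end{enumerate}
Together these give a continuous version of $\psi$ in $A^{1,2}_\delta(D_\Theta)$.

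For sharpness, I would again transfer from \Cref{th_main}. If $\psi$ took values in $A^{1,2}(D_\Theta)$, then $\partial_x\psi$ would take values in $A^2(D_\Theta)$, contradicting the Dirichlet sharpness statement, since $\partial_x\psi$ is a Dirichlet solution. The same argument applies to $A^{1,2}_\delta(D_{\pi/4})$. The zero initial datum suffices as a counterexample.

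The main obstacle I expect is justifying the identification of $\partial_x\psi$ with the Dirichlet stochastic convolution, including the signs at the two endpoints and the zero mode. Since the proof of \Cref{th_main} presumably rests on a Dirichlet well-posedness result in Bergman spaces (\Cref{th_hol_tilde}), I would first try to avoid this identification. Instead, I would prove directly the Neumann analogue, \Cref{th_hol_tilde_neu}, and repeat the Dirichlet argument. That route needs a Hilbert–Schmidt estimate
\[
\int_0^T \|\mathbb{T}^N_t B_N\|^2_{HS(\mathbb{C}^2,A^{1,2}_\delta)}\,{\rm d}t<\infty ,
\]
which I expect to reduce to the same kernel bound on $D_\Theta$ that underlies the Dirichlet case, because of the derivative correspondence above.
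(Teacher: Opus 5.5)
Your main route is correct, but it is organized differently from the paper's.

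The paper stays inside the abstract framework of \Cref{prop_reg_stoch}. It first proves that the Neumann Laplacian generates an analytic semigroup on $A^{1,2}_\delta(D_\Theta)$ (\Cref{th_hol_tilde_neu}), via the density result \Cref{dens_meu_prop} and a resolvent estimate built from the Dirichlet one. It then derives weighted Hilbert--Schmidt bounds for $\Phi^N_\tau$ into $A^{1,2}_\delta(D_\Theta)$ (\Cref{expint_neu}) from three ingredients: the identity $(\Phi^N_\tau u)'=\Phi^D_\tau u$ in \eqref{inca_un_label}, \Cref{expint}, and the norm equivalence of \Cref{rem_fonala_foarte}.

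You apply the same derivative correspondence to the whole mild solution, initial datum included. The continuous $A^2_\delta(D_\Theta)$-version of $\partial_x\psi$ then comes directly from \Cref{th_main}. The zero mode $\langle\psi(t),1\rangle$ is continuous, being a functional of the $L^2$-continuous version, and you reinsert it through a mean-normalized Poincar\'e inequality. The only extra ingredients you need are those of Appendix II:
\begin{enumerate}
\item \Cref{prop:embedding-DTheta}: a primitive of an $A^2_\delta$ function is in $A^2_\delta$;
\item \Cref{prop:poincare-Qtheta}: it bounds the $A^{1,2}_\delta(D_\Theta)$-increments of the reconstructed process by the $A^2_\delta(D_\Theta)$-increments of $\partial_x\psi$ plus the increments of the mean.
\end{enumerate}
Your radial argument from the centre for item 1 degenerates at $\pi/2$ itself, where the polar Jacobian vanishes. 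This is easily patched, and Appendix II avoids it by integrating along lines.

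Your argument is shorter and needs no Neumann semigroup theory on Bergman spaces. The paper's argument buys \Cref{th_hol_tilde_neu} as a result of independent interest, and a uniform treatment of both boundary conditions. Your sharpness transfer is the process-level counterpart of \Cref{rem_pentru_n}.

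The identification you flag as the main obstacle is routine. The quantity $-n\langle\psi(t),\cos(n\cdot)\rangle$ and the $n$-th sine coefficient of the Dirichlet solution with datum $\psi_0'$ solve the same scalar SDE driven by the same noises. This is precisely what the paper relies on. Your fallback is therefore unnecessary, and as written it would not give continuity anyway. The condition $\int_0^T\|\mathbb{T}^N_tB_N\|^2_{HS}\,{\rm d}t<\infty$ only yields values in $A^{1,2}_\delta(D_\Theta)$ through \Cref{th_reg_min}. A continuous version requires the weighted condition with the factor $t^{-2\gamma}$ of \Cref{prop_corectata}.
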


\subsection{Roadmap of the proof of the main results}\label{subsec_roadmap}
In this subsection we briefly explain the strategy of the proofs of \Cref{th_main} and \Cref{th_main_neu}.  
The key point is that the holomorphic regularity of the stochastic trajectories is obtained by combining two ingredients:  
on the one hand, sharp deterministic regularity properties for the corresponding boundary-controlled heat equations; on the other hand, abstract criteria ensuring that stochastic convolutions inherit continuity in time with values in a smaller state space.

We first treat the case of Dirichlet boundary noise.  
The starting point is the deterministic control system associated with the heat equation with Dirichlet boundary inputs, recalled in \Cref{backsys}.  
For this system, the reachable space is known to coincide with the Bergman space $A^2(D_{\frac{\pi}{4}})$.  
Our aim is to show that, after passing to the stochastic problem, one still obtains trajectories in a holomorphic state space, but only in the larger weighted spaces $A_\delta^2(D_\Theta)$ with $\delta\in(0,1)$ and $\Theta\in(0,\frac{\pi}{4})$.

The proof proceeds in two steps.  
First, in \Cref{sec_rhombus} we establish weighted Hilbert--Schmidt estimates for the deterministic input maps associated with the Dirichlet boundary-controlled heat equation.  
These estimates rely on the explicit representation of the input map by complexified heat kernels and on precise summability properties in weighted Bergman spaces on rhombi.  
They yield, in particular, that the input operators introduced in \eqref{def_tilde_phi_dir} are Hilbert--Schmidt from $L^2([0,\tau];\mathbb C^2)$ to $A_\delta^2(D_\Theta)$.

Second, in \Cref{sec_generator} we show that the heat semigroup with Dirichlet boundary conditions restricts to an analytic semigroup on $A_\delta^2(D_\Theta)$.  
This requires proving that the Dirichlet Laplacian admits a natural realization on this weighted Bergman space and that the corresponding resolvent is bounded on suitable sectors.  
Once this semigroup-theoretic step is completed, the abstract results from \Cref{sec2}, and in particular \Cref{prop_reg_stoch}, imply that the stochastic convolution has a version continuous in time with values in $A_\delta^2(D_\Theta)$.  
This proves the first part of \Cref{th_main}.

The sharpness statement in \Cref{th_main} is obtained by going back to the deterministic input maps.  
More precisely, \Cref{prop_sharp} shows that at the critical values $\delta=0$ or $\Theta=\frac{\pi}{4}$ the relevant input map is no longer Hilbert--Schmidt into the corresponding Bergman space.  
By the characterization recalled in \Cref{th_reg_min}, this prevents the stochastic solution from taking values in those critical spaces in general.

The proof of \Cref{th_main_neu} follows the same general pattern, but with the Bergman--Sobolev space $A_\delta^{1,2}(D_\Theta)$ in place of $A_\delta^2(D_\Theta)$.  
The main observation is that the Neumann problem is closely linked to the Dirichlet one through differentiation with respect to the space variable.  
This allows us to transfer a substantial part of the deterministic estimates from the Dirichlet setting to the Neumann setting.  
In particular, the derivative of the Neumann input map can be identified with the Dirichlet input map, which leads to Hilbert--Schmidt estimates in $A_\delta^{1,2}(D_\Theta)$.

The remaining step is to prove that the Neumann heat semigroup restricts to an analytic semigroup on $A_\delta^{1,2}(D_\Theta)$.  
This is carried out in \Cref{sen_meu_main}, together with the density and resolvent estimates needed for the abstract stochastic argument.  
Once these facts are established, \Cref{prop_reg_stoch} again yields continuity in time of the stochastic trajectories with values in the appropriate holomorphic space.  
The optimality of the result is proved by the same mechanism as in the Dirichlet case, namely by combining the failure of the Hilbert--Schmidt property at the critical parameters with \Cref{th_reg_min}.

To summarize, the proof of both main theorems follows the same scheme:
\begin{enumerate}
\item identify a holomorphic state space naturally connected with the reachable space of the deterministic boundary-controlled heat equation;
\item prove that the deterministic input maps satisfy suitable Hilbert--Schmidt estimates in that space;
\item show that the heat semigroup restricts to an analytic semigroup on the same space;
\item apply  abstract stochastic regularity criteria to deduce that the stochastic solution admits a continuous version with values in that holomorphic space;
\item use the failure of the Hilbert--Schmidt property at the critical values to prove sharpness.
\end{enumerate}
This roadmap also explains the organization of the paper: \Cref{sec_rhombus} provides the deterministic Bergman-space estimates for the Dirichlet problem, \Cref{sec_generator} uses them to prove \Cref{th_main}, and \Cref{sen_meu_main} adapts the argument to the Neumann case and proves \Cref{th_main_neu}. The paper is completed by \Cref{backsys} and \Cref{sec2} containing background material and by two appendices collecting the proof of the main resolvent estimate and of several auxiliary properties of Bergman spaces on rhombi.



\section{Some background on well-posed linear control systems}\label{backsys}

 Beginning with its introduction in \cite{DAP_Z_Main}, the formalism used to describe  systems with white noise in the boundary 
conditions has many common points with the theory of well-posed linear boundary control systems introduced in  Weiss
\cite{weiss1989admissibility_con}. This similitude is natural since in both cases the input (the white noise or the deterministic 
control) acts through the boundary. Consequently, the semigroup  formulations of these two types of these problems involve operators with values in a space 
larger then the aimed state space, often called {\em unbounded control operators.} For many deterministic or stochastic PDE systems  involving such operators, we can however
construct solutions with values in the natural state space. This is due to a property of the control operator, which has been  called {\em admissibility} in \cite{weiss1989admissibility_con}
in the deterministic case and {\em stochastic admissibility} in Abreu, Haak and Van Neerven \cite{abreu2013stochastic}, for systems with noise in the boundary conditions.

In this section we gather, for later use, some basic facts about a class of deterministic control systems. These systems are described by a strongly continuous operator semigroup,
simply designed by operator semigroup or just semigroup in the remaining part of this work, and by an admissible control operator.

Most of these results are known, so they are given without proofs. We refer to \cite{weiss1989admissibility_con} or Tucsnak and Weiss \cite[Ch.3]{TucsnakWeiss} for more details and for the proofs.
Moreover, this section also contains two results which seem new:  \Cref{prop_restr} and \Cref{rem_exp_abs} below, for which we provide proofs.

\begin{definition} \label{WPS}
Let $U$ and $X$ be Hilbert spaces. A {\em well-posed linear control
system} with state space $X$ and control space $U$ is a couple $\Sigma=\begin{bmatrix}\mathbb{T} & \Phi\end{bmatrix}$  of families of operators such that
\begin{enumerate}
\item $\mathbb{T}=(\mathbb{T}_t)_{t\geqslant 0}$ is an operator semigroup on $X$;
\item $\Phi=(\Phi_t)_{t\geqslant 0}$ is a family of bounded linear
operators from $L^2([0,\infty);U)$ to $X$ such that
for every $u, v\in L^2([0,\infty);U)$ and all $\tau,t\geqslant 0$ we have
\begin{equation}\label{funceqPhi0}
   \Phi_{\tau+t} (u\ICONT{\tau}v) = \mathbb{T}_t\Phi_\tau u + \Phi_t v \qquad\qquad(t,\tau\geqslant 0),
\end{equation}
where the \emph{$\tau$-concatenation}
of two signals $u$ and $v$, denoted $u\ICONT\tau v$, is the function
\vspace{-2mm}
\begin{equation}\label{DEF_CONC}
u\ICONT\tau v = \begin{cases}
 u(t)&\quad \hbox{for} \ \ t\in[0,\tau),\\
 v(t-\tau)&\quad \hbox{for}\ \ t\geqslant\tau .
 \end{cases}
 \end{equation}
\end{enumerate}
\end{definition}

Let $A:\mathcal{D}(A)\to X$ be the generator of $\mathbb{T}$. 
Let $\beta$ belong to the resolvent set of $A$ and let $X_{-1}$ be the completion of $X$ with respect to the the norm
\begin{equation}\label{f_in_X}
    \|f\|_{X_{-1}}=\|(\beta \mathbb{I}-A)^{-1} f\|_X \qquad\qquad(f\in X).
\end{equation}
 
\begin{remark}\label{rem-1}
The space $X_{-1}$ defined above does not depend on the choice of $\beta$ in the resolvent set of $A$, since $\|\cdot\|_{X_ {-1}}$
is equivalent to the  norm in the dual of $\mathcal{D}(A^*)$ with respect to the pivot space $X$.
\end{remark}

It is known (see, for instance, \cite[Remark 2.10.5]{TucsnakWeiss}) that 
$X\subset X_{-1}$ with continuous and dense embedding and that the original semigroup $\mathbb{T}$ has an extension to $X_{-1}$ that is the image of $\mathbb{T}$ through the unitary operator $\beta I-A\in\mathcal{L}(X,X_{-1})$, where
$\beta$ is in the resolvent set of $A$. We refer to \cite[Remark 2.10.5]{TucsnakWeiss} for a proof of the last statement. This restriction (or extension) will be still denoted by $\mathbb{T}$.
Moreover, the generator of this semigroup, which is a restriction or extension of the original generator, will still be denoted by $A$.

We also recall below an important result which is a particular case of Theorem 3.9 in \cite{weiss1989admissibility_con}.

\begin{theorem}\label{th_rep}
Let $X$ and $U$ be two Hilbert spaces and $\Sigma=\begin{bmatrix}\mathbb{T} & \Phi \end{bmatrix}$ be a well-posed linear control system with state space $X$
and control space $U$. Then there exists a unique operator $B\in \mathcal{L}(U,X_{-1})$ such that
\begin{equation}\label{Phi_t}
   \Phi_t u = \int_0^t\mathbb{T}_{t-\sigma}Bu(\sigma)\, {\rm d}\sigma \qquad\qquad(t>0).
\end{equation}
Moreover, for any $z_0\in X$ and $u\in L^2([0,\infty);U)$ the function
\begin{equation}\label{mild}
   z(t) = \mathbb{T}_t z_0 + \Phi_t u \qquad\qquad(t\geqslant 0),
\end{equation}
is the unique solution  in $C([0,\infty);X)$ (in the sense of \cite[Definition 3.5]{weiss1989admissible}) of
\begin{equation}\label{xdot}
   \dot{z}(t) = A z(t) + B u(t) \,,
\end{equation}
with $z(0)=z_0$. 
\end{theorem}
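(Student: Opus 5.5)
The approach follows Weiss's construction. I first define $B$ through the resolvent and the functional equation, then prove the integral representation \eqref{Phi_t} by a density argument. Well-posedness and uniqueness of the solution in the sense of \cite[Definition 3.5]{weiss1989admissible} come last.

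\emph{Construction of $B$.} For $v\in U$ and $\tau>0$, let $u_\tau = \mathbf 1_{[0,\tau]} v$. I set
\[
B v := \lim_{\tau\to 0^+} \frac{1}{\tau}\Phi_\tau u_\tau,
\]
with the limit taken in $X_{-1}$. To show this limit exists, I apply $(\beta I-A)^{-1}$, which is bounded from $X_{-1}$ to $X$. I then use the Laplace transform. The functional equation \eqref{funceqPhi0} applied to concatenations of constant inputs shows that $\tau\mapsto\Phi_\tau(\mathbf 1 v)$ satisfies $\Phi_{\tau+t}(\mathbf 1 v)=\mathbb T_t\Phi_\tau(\mathbf 1 v)+\Phi_t(\mathbf 1 v)$. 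This is the relation satisfied by $\int_0^\tau \mathbb T_\sigma B v\,{\rm d}\sigma$. The Hille--Yosida growth bound $\|\Phi_\tau\|\le M e^{\omega\tau}$, which itself follows from \eqref{funceqPhi0} and the semigroup bound, lets me define
\[
\hat B(\lambda)v := \lambda\int_0^\infty e^{-\lambda t}\,\Phi_t(\mathbf 1 v)\,{\rm d}t
\quad\text{for } \operatorname{Re}\lambda>\omega.
\]
Using the functional equation, I check that $(\lambda I - A)\hat B(\lambda)v$ is independent of $\lambda$, and I call this common element $Bv\in X_{-1}$. Boundedness $B\in\mathcal L(U,X_{-1})$ follows from $\|\Phi_t\|\le Me^{\omega t}$ and $\|(\lambda-A)^{-1}\|_{\mathcal L(X_{-1},X)}<\infty$.

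\emph{Representation formula.} For step functions $u$ (finite concatenations of constants), \eqref{funceqPhi0} together with the constant-input case gives
\[
\Phi_t u=\int_0^t\mathbb T_{t-\sigma}Bu(\sigma)\,{\rm d}\sigma,
\]
where the integral is computed in $X_{-1}$. Both sides are continuous from $L^2([0,t];U)$ to $X_{-1}$: the left by boundedness of $\Phi_t$, the right because the extended semigroup is exponentially bounded on $X_{-1}$. Since step functions are dense, the formula holds for all $u$, with the integral a priori in $X_{-1}$ but actually lying in $X$. Uniqueness of $B$ follows from differentiating this formula at $t=0$ for constant $u$.

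\emph{Solutions.} For $z$ given by \eqref{mild}, continuity in $X$ reduces to continuity of $t\mapsto\Phi_t u$. Writing $\Phi_{t+h}u=\mathbb T_h\Phi_t u+\Phi_h(\text{shifted }u)$ and noting $\|\Phi_h\|$ is bounded for small $h$, this follows because the shifted input's restriction to $[0,h]$ has $L^2$ norm tending to $0$. The identity
\[
z(t)-z_0=A\int_0^t z\,{\rm d}s+B\int_0^t u\,{\rm d}s \quad\text{in } X_{-1}
\]
is verified first for smooth $u$ and then extended by density. Uniqueness holds because the difference of two solutions solves $\dot w=Aw$ in $X_{-1}$ with $w(0)=0$, so $w=0$ by uniqueness for semigroups on $X_{-1}$.

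The main obstacle is the construction step: proving that the $\lambda$-independence identity, and hence a genuine element $Bv\in X_{-1}$, follows purely from \eqref{funceqPhi0} and the resolvent identity.
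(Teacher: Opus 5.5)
The paper gives no proof of this statement. It quotes Weiss's Theorem~3.9, and \Cref{lem_construction} records that there $B$ is produced directly as the $X_{-1}$-limit of $\Phi_\tau(u({\rm v}))/\tau$. Your route is genuinely different and sound. You set $\phi(t)=\Phi_t(\mathbf 1{\rm v})$, define $B{\rm v}:=\lambda(\lambda\mathbb I-A)\hat\phi(\lambda)$, and get the constant-input representation from uniqueness of Laplace transforms.

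The step you call the main obstacle is a short computation. Transforming $\phi(t+\tau)=\mathbb T_t\phi(\tau)+\phi(t)$ in both variables gives
\[
\frac{\hat\phi(\mu)-\hat\phi(\lambda)}{\lambda-\mu}=(\lambda\mathbb I-A)^{-1}\hat\phi(\mu)+\frac{1}{\mu}\hat\phi(\lambda),
\]
that is, $\mu\hat\phi(\mu)-\lambda\hat\phi(\lambda)=\mu(\lambda-\mu)(\lambda\mathbb I-A)^{-1}\hat\phi(\mu)$. Applying $\lambda\mathbb I-A$ yields $\lambda(\lambda\mathbb I-A)\hat\phi(\lambda)=\mu(\mu\mathbb I-A)\hat\phi(\mu)$.

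The direct route needs neither the exponential bound nor Laplace uniqueness. Integrating $(\mathbb T_h-\mathbb I)\phi(t)=\phi(t+h)-\phi(t)-\phi(h)$ over $t\in[0,1]$ gives
\[
\frac{\phi(h)}{h}=\frac1h\int_1^{1+h}\phi(s)\,{\rm d}s-\frac1h\int_0^h\phi(s)\,{\rm d}s-\frac{\mathbb T_h-\mathbb I}{h}\int_0^1\phi(s)\,{\rm d}s\longrightarrow\phi(1)-A\int_0^1\phi(s)\,{\rm d}s
\]
in $X_{-1}$. Then $\frac{\phi(t+h)-\phi(t)}{h}=\mathbb T_t\frac{\phi(h)}{h}\to\mathbb T_tB{\rm v}$ gives the representation. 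Its main advantage is that it yields the limit formula \eqref{formula_limita} itself, which the paper uses in item~3 of \Cref{prop_restr}. In your approach that formula is only a corollary of the representation, since $\frac1\tau\int_0^\tau\mathbb T_\sigma B{\rm v}\,{\rm d}\sigma\to B{\rm v}$ in $X_{-1}$. So your opening sentence, which announces $B$ as that limit without ever showing the limit exists, should be moved to the end.

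A few repairs are needed.
\begin{enumerate}
\item You take Laplace transforms of $\phi$ before establishing any regularity of $t\mapsto\Phi_tu$. Later you prove only right continuity, but $z\in C([0,\infty);X)$ needs two-sided continuity; right continuity plus continuity in $X_{-1}$ does not give it. Both points are settled by $\Phi_tu=\Phi_{t+h}(S_hu)$, where $S_h$ is the right shift by $h$ with zero padding; this is \eqref{funceqPhi0} with $\Phi_h0=0$. It yields $\|\Phi_s\|\leqslant\|\Phi_T\|$ for $s\leqslant T$, and, for small $h>0$,
\[
\|\Phi_{t+h}u-\Phi_tu\|_X\leqslant\|\Phi_{t+1}\|\,\|u-S_hu\|_{L^2},\qquad \|\Phi_{t-h}u-\Phi_tu\|_X\leqslant\|\Phi_t\|\,\|S_hu-u\|_{L^2}.
\]
Both right-hand sides tend to $0$ by strong continuity of translations.
\item The constant-input formula $\phi(t)=\int_0^t\mathbb T_\sigma B{\rm v}\,{\rm d}\sigma$ underlies your step-function argument. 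State it explicitly as the outcome of Laplace uniqueness: both sides are continuous in $X_{-1}$ with transform $\hat\phi(\lambda)$.
\item Boundedness of $B$ uses $\lambda\mathbb I-A\in\mathcal L(X,X_{-1})$ together with $\|\phi(t)\|_X\leqslant\sqrt t\,\|\Phi_t\|\,\|{\rm v}\|$. It does not use boundedness of $(\lambda\mathbb I-A)^{-1}$ from $X_{-1}$ to $X$.
\end{enumerate}
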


\begin{remark}\label{lem_construction}
 The existence of the operator $B$ in the proof of Theorem 3.9 in \cite{weiss1989admissibility_con} is obtained constructively. More precisely, we have
 \begin{equation}\label{formula_limita}
B {\rm v}=\lim_{\tau\to 0+} \frac{\Phi_\tau(u({\rm v}))}{\tau} \qquad{\rm in}\qquad X_{-1} \qquad({\rm v}\in U),
 \end{equation}
 where, for every ${\rm v}\in U$, we have that  $u({\rm v})\in L^2[0,\infty);U)$ is  any signal with  
 \[
 [u({\rm v})](t)={\rm v} \qquad\qquad(t\in [0,1]).
 \]
 We refer to the proof of Theorem 3.9 in \cite{weiss1989admissible} for the existence of the limit appearing in \eqref{formula_limita}.
\end{remark}

\begin{remark}\label{rem_admissible}
 In \eqref{Phi_t} the notation $\mathbb{T}_{t-\sigma}$ should be understood as standing for the extension of the original $\mathbb{T}_{t-\sigma}$ to $X_{-1}$, so that $\mathbb{T}_{t-\sigma}Bu(\sigma)\in X_{-1}$ for almost every $\sigma\in [0,t]$. However, in the context of well posed linear control systems, the integral on $[0,t]$ of this expression lies in $X$. Given a semigroup $\mathbb{T}$, an 
 operator $B\in \mathcal{L}(U,X_{-1})$ having the property that for every $u\in L^2([0,\infty);U)$ and every $t\geqslant 0$ the right hand side of \eqref{Phi_t}
 defines an element of $X$ is called an {\em admissible control operator for $\mathbb{T}$.} Thus, thanks to \Cref{th_rep}, a well posed linear control system
 can be alternatively defined by a pair $(A,B)$, with $A$ the generator of an operator semigroup $\mathbb{T}$ on $X$ and  $B\in \mathcal{L}(U,X_{-1})$ an admissible 
 control operator for $\mathbb{T}$.
\end{remark}

We also recall two other concepts which are fundamental in systems theory: the reachable space and null controllability. Firstly, given $\tau>0$, the reachable space in time $\tau$
of a well-posed linear control system $\Sigma=\begin{bmatrix} \mathbb{T} & \Phi \end{bmatrix}$, denoted ${\rm Ran}\, \Phi_\tau$, is the range of the operator $\Phi_\tau$.
We also note that ${\rm Ran}\, \Phi_\tau$, endowed with the norm
\begin{equation}\label{norm_induced}
\|\eta\|_{{\rm Ran}\, \Phi_\tau}=\inf_{\di{u\in L^2([0,\tau];U)}
{\Phi_\tau u=\eta}} \quad \|u\|_{L^2([0,\tau];U)} \qquad\qquad(\eta\in {\rm Ran}\, \Phi_\tau) ,
\end{equation}
is a Hilbert space.

\begin{definition} \label{excont} 
Let $\tau>0$. The well-posed control  system $\begin{bmatrix}\mathbb{T} &\Phi \end{bmatrix}$ is said null-controllable in time in time $\tau$
      if \ ${\rm Ran}\, \Phi_\tau\supset{\rm Ran}\, \mathbb{T}_\tau$.
\end{definition}

The classical result below, due to Fattorini \cite{fatt78} and  Seidman \cite{Seid79} (see also \cite[Proposition 3.1]{MR4474841} for a short proof),
gives an important property of systems which are null controllable in any time.

\begin{proposition}\label{FattSeid}
Assume that the well-posed  linear control system $\Sigma=\begin{bmatrix}\mathbb{T} &\Phi\end{bmatrix}$ is null controllable in any positive time.
Then ${\rm Ran}\, \Phi_\tau$ does not depend on $\tau>0$.
\end{proposition}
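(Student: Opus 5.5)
The plan is to show the two inclusions ${\rm Ran}\,\Phi_\tau\subset{\rm Ran}\,\Phi_{\tau'}$ and ${\rm Ran}\,\Phi_{\tau'}\subset{\rm Ran}\,\Phi_\tau$ for arbitrary $0<\tau<\tau'$, using only the composition rule \eqref{funceqPhi0} and null controllability.

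\emph{Monotonicity.} We first prove ${\rm Ran}\,\Phi_\tau\subset{\rm Ran}\,\Phi_{\tau'}$ whenever $\tau<\tau'$; this step does not use controllability. Take $\eta=\Phi_\tau v$ with $v\in L^2([0,\infty);U)$. Apply \eqref{funceqPhi0} with the zero signal in the first slot and delay $\tau'-\tau$:
\[
\Phi_{\tau'}\bigl(0\ICONT{\tau'-\tau}v\bigr)=\mathbb{T}_\tau\Phi_{\tau'-\tau}0+\Phi_\tau v=\Phi_\tau v=\eta .
\]
Hence $\eta\in{\rm Ran}\,\Phi_{\tau'}$.

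\emph{Reverse inclusion.} Fix $\eta=\Phi_{\tau'}u\in{\rm Ran}\,\Phi_{\tau'}$ and split $[0,\tau']$ at $s:=\tau'-\tau>0$. The input $u$ coincides with $u\ICONT{s}w$, where $w(t)=u(t+s)$. Applying \eqref{funceqPhi0} with the first time equal to $s$ and the second equal to $\tau$ gives
\[
\eta=\mathbb{T}_\tau\Phi_s u+\Phi_\tau w .
\]
The second term already lies in ${\rm Ran}\,\Phi_\tau$. For the first term, null controllability in time $\tau$ means ${\rm Ran}\,\mathbb{T}_\tau\subset{\rm Ran}\,\Phi_\tau$. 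So there is $\tilde v$ with $\mathbb{T}_\tau(\Phi_s u)=\Phi_\tau\tilde v$. By linearity, $\eta=\Phi_\tau(\tilde v+w)\in{\rm Ran}\,\Phi_\tau$.

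Combining the two inclusions yields ${\rm Ran}\,\Phi_\tau={\rm Ran}\,\Phi_{\tau'}$ for all $0<\tau<\tau'$. Null controllability is only invoked at the smaller time $\tau$, which is positive; this is where the hypothesis ``in any positive time'' enters.

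The only delicate point is applying the concatenation identity \eqref{funceqPhi0} with the correct ordering of times. This is bookkeeping rather than a genuine obstacle.

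If one also wants the norms \eqref{norm_induced} at different times to be equivalent, that follows from the closed graph theorem. The reachable spaces coincide as sets and both norms make them Hilbert spaces continuously embedded in $X$, so the identity map between them is closed and therefore bounded in both directions.
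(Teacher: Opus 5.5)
Your proof is correct. It is essentially the short argument the paper cites rather than reproves (Proposition 3.1 of Ervedoza--Le Balc'h--Tucsnak). Zero-padding the input in the composition rule gives ${\rm Ran}\,\Phi_\tau\subset{\rm Ran}\,\Phi_{\tau'}$, and splitting the input at $\tau'-\tau$ and absorbing $\mathbb{T}_\tau\Phi_{\tau'-\tau}u$ by null controllability at the smaller time $\tau$ gives the reverse inclusion.
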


The result above justifies the following definition:

\begin{definition}\label{def_reachable_unic}
Let $\Sigma=\begin{bmatrix}\mathbb{T} &\Phi\end{bmatrix}$ be a well-posed linear control system which is null controllable in any positive time. Its reachable space $R_\Sigma$ is
defined as ${\rm Ran}\, \Phi_\tau$ for some (and hence all) $\tau>0$.
\end{definition}

We give below two examples illustrating the concepts and results introduced in this section. These examples  will be repeatedly used in this work.

\begin{example}\label{ex_heat_det}
 It is well known, see, for instance, \cite[Section 10.7]{TucsnakWeiss}, that the  system \eqref{PROBHEAT1} can be written in the form  \eqref{xdot} with
 $X=X_D:=W^{-1,2}(0,\pi)$ and 
 $A=A_D$, where the operator $A_D:\mathcal{D}(A_D)\to X_D$ is defined by \(\mathcal{D}(A_D)=W_0^{1,2}(0,\pi)\), with
 \begin{equation*}
     A_D \varphi=\frac{{\rm d}^2 \varphi }{{\rm d}x^2} \qquad\qquad(\varphi\in \mathcal{D}(A_D)).
 \end{equation*}
 It is well known that $A_D$ is a negative operator on $X_D$, so it generates an analytic semigroup $\mathbb{T}^D$
 on $X_D$.
 In this case the input space is $U=\mathbb{C}^2$ and the admissible control operator $B_D$ is defined by 
 \begin{equation}\label{def_B_primul}
 B_D \begin{bmatrix}  u_0\\ u_\pi \end{bmatrix}  =   \begin{bmatrix}  -u_0 \frac{{\rm d}\delta_0}{{\rm d}x}\\ u_\pi \frac{{\rm d}\delta_\pi}{{\rm d}x} \end{bmatrix} \qquad\qquad\left(\begin{bmatrix}  u_0\\ u_\pi \end{bmatrix}\in \mathbb{C}^2\right),
 \end{equation}
 where $\delta_0$ and $\delta_\pi$ are the Dirac masses concentrated at $x=0$ and $x=\pi$, respectively.
 In other terms, equations \eqref{PROBHEAT1} determine a well-posed linear control system $\Sigma_D$ with state space $X_D:=W^{-1,2}(0,\pi)$ and control space $U=\mathbb{C}^2$.
 It goes back to the classical work of Fattorini and Russell \cite{FatRus2} that this system is null controllable in any positive time so its reachable space
 $R_{\Sigma_D}$ is meaningful according to \Cref{def_reachable_unic}.  This range has been fully determined  in a series of papers, going from Hartmann, Kellay and Tucsnak \cite{HKT_2020}
to Hartmann and Orsoni \cite{hartmann2021separation}, where it has been shown that for every $\tau>0$ we have $R_{\Sigma_D}=A^2\left(D_\frac{\pi}{4}\right)$, where $D_\frac{\pi}{4}$ is defined by \eqref{fara_rho} with $\Theta=\frac{\pi}{4}$.  As mentioned in \Cref{rem_ident}, this means that the states which are reachable at time $\tau>0$ are exactly the smooth functions defined on $(0,\pi)$ which admit an extension to  $D_\frac{\pi}{4}$ which is both holomorphic and square integrable (with respect to the area measure) on $D_\frac{\pi}{4}$.
\end{example}

\begin{example}\label{ex_heat_det_neu}
Consider the system
\begin{equation}\label{PROBHEAT1_neu}
   \left\{ \begin{array}{lr}\dfrac{\partial z}{\partial t}(t,x)
   = \dfrac{\partial^2 z}{\partial x^2}(t,x)& t\geqslant 0,\
   x\in (0,\pi),\\ \ &\ \\ \frac{\partial z}{\partial x}(t,0)=  u_0(t),
   \ \ \frac{\partial z}{\partial x}\left(t,\pi\right)=  u_\pi(t) & t\in[ 0,\infty),\\ \ &\ \\ 
   z(0,x)
= 0\ \ \ \ \ \ & x\in \left(0,\pi\right), \end{array}\right.
\end{equation}
described by the one dimensional heat equation with Neumann boundary control. As in the case of Dirichlet boundary inputs, this system can be written (see, for instance, \cite[Section 10.2]{TucsnakWeiss}) in the form  \eqref{xdot} with
 $X=X_N:=L^2[0,\pi]$ and 
 $A=A_N$, where the operator $A_N:\mathcal{D}(A_N)\to X_N$ is defined by
 \[
 \mathcal{D}(A_N)=\left\{f\in W^{2,2}(0,\pi)\ \ \ |\ \ \frac{{\rm d}f}{{\rm d}x}(0)=\frac{{\rm d}f}{{\rm d}x}(\pi)=0\right\},
 \]
 \begin{equation*}
     A_N \varphi=\frac{{\rm d}^2 \varphi }{{\rm d}x^2} \qquad\qquad(\varphi\in \mathcal{D}(A_N)).
 \end{equation*}
 It is well known that $A_N$ is a negative operator on $X_N$, so it generates an analytic semigroup $\mathbb{T}^N$
 on $X_N$.
 In this case the input space is $U=\mathbb{C}^2$ and the admissible control operator $B_N$ is defined by 
 \begin{equation}\label{def_B_primul_neumann}
 B_N \begin{bmatrix}  u_0\\ u_\pi \end{bmatrix}  = 
   \begin{bmatrix}  u_0 \delta_0\\ -u_\pi \delta_\pi \end{bmatrix} \qquad\qquad\left(\begin{bmatrix}  u_0\\ u_\pi \end{bmatrix}\in \mathbb{C}^2\right),
 \end{equation}
 where $\delta_0$ and $\delta_\pi$ are the Dirac masses concentrated at $x=0$ and $x=\pi$, respectively.

Alternatively, we can say that \eqref{PROBHEAT1_neu} determine a well-posed linear control system with state space $X_N$ and control space $U=\mathbb{C}^2$.
According to \cite{FatRus2}, this system is null controllable in every positive time. Thus  
\Cref{FattSeid} implies that ${\rm Ran}\, \Phi_\tau^N$ does not depend on $\tau>0$. According to \cite{hartmann2021separation}, for every $\tau>0$ we have ${\rm Ran}\, \Phi_\tau^N=A^{1,2}\left(D_\frac{\pi}{4}\right)$ (see \eqref{Sob_Berg} for the definition of this space).
 Again, as mentioned in \Cref{rem_ident}, this means that the states which are reachable at time $\tau>0$ are exactly the smooth functions defined on $(0,\pi)$ which admit an extension to  $D_\frac{\pi}{4}$ having the properties of being holomorphic on $D_\frac{\pi}{4}$ and of being, together with its complex derivative, square integrable on $D_\frac{\pi}{4}$.
\end{example}
 
 The proposition  below gives sufficient conditions guaranteeing that the restriction of a well-posed linear control system to a smaller space is still a well-posed linear control system, with the same control operator. 

\begin{proposition}\label{prop_restr}
    Let $\Sigma =\begin{bmatrix} \mathbb{T} &\Phi \end{bmatrix}$ be a well posed 
    control system with state space $X$ and control space $U$. Let $A$ be the generator of $\mathbb{T}$ and let $B$ be
    the associated control operator defined in \eqref{formula_limita}. Assume 
    that the system is null controllable in any positive time and let the Hilbert space 
    $\tilde X$ be invariant for $\mathbb{T}$, with
    \begin{equation}\label{inclusions}
    R_\Sigma\subset \tilde X \subset X,
    \end{equation}
    with continuous  inclusions, where  $R_\Sigma$ is the reachable space of $\Sigma$ (in the sense of \Cref{def_reachable_unic}). Moreover, assume that $\tilde{\mathbb{T}}:=\mathbb{T}|_{\tilde X}$\
    is an operator semigroup on $\tilde{X}$. Then we have: 
    \begin{enumerate}
   \item  $\tilde \Sigma=\begin{bmatrix}
        \tilde{\mathbb{T}} & \Phi
    \end{bmatrix}$ is a well posed  control system with state space $\tilde X$ and control space $U$. Moreover, 
    the generator $\tilde A$ of  $\tilde{\mathbb{T}}$ is the part of $A$ in $\tilde X$, i.e.
    \begin{equation}\label{gen_tilde}
    \mathcal{D}(\tilde A)=\{f\in \mathcal{D}(A)\ \ |\ \ Af\in \tilde X\},
    \end{equation}
    \begin{equation}\label{op_tilde}
    \tilde A f=A f \qquad\qquad(f\in \mathcal{D}(\tilde A)).
    \end{equation}
    \item 
    Let $\tilde X_{-1}$ be the completion of $\tilde X$ with respect to the norm $f\mapsto \|(\beta I-\tilde A)^{-1}f\|_{\tilde{X}}$, where $\beta$ is in resolvent set of $\tilde A$,
    and recall that $X_{-1}$ is the completion of $X$ with respect to the norm defined in \eqref{f_in_X}. Then, up to a linear isomorphism, we have that $\tilde X_{-1}\subset X_{-1}$
    with continuous inclusion.
    
    \item  Let $\tilde B$ be the control operator associated to $\tilde\Sigma$ via \eqref{formula_limita}, where $X_{-1}$ is replaced by
    $\tilde X_{-1}$.
    Then $\tilde B=B$.
    \end{enumerate}
\end{proposition}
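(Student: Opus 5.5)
Part 1 is the core; parts 2 and 3 follow from it together with \eqref{formula_limita}.

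\emph{Part 1: $\tilde\Sigma$ is well posed.} First we check that $\Phi_\tau$ maps $L^2([0,\infty);U)$ into $\tilde X$ boundedly. By \eqref{inclusions}, $\operatorname{Ran}\Phi_\tau = R_\Sigma\subset\tilde X$. The operator $\Phi_\tau$ is bounded into $X$, and $\tilde X\hookrightarrow X$ continuously. Hence $\Phi_\tau$, viewed as a map into $\tilde X$, is closed, and the closed graph theorem makes it bounded into $\tilde X$. Alternatively, one can factor $\Phi_\tau$ through $R_\Sigma$ with the norm \eqref{norm_induced}, in which $\Phi_\tau$ is a contraction, and then use the continuous inclusion $R_\Sigma\subset\tilde X$. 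The composition law \eqref{funceqPhi0} holds in $\tilde X$ because it already holds in $X$ and $\tilde{\mathbb T}_t=\mathbb T_t|_{\tilde X}$. This gives well-posedness.

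For the generator, let $f\in\mathcal D(\tilde A)$. Then $(\mathbb T_t f-f)/t$ converges in $\tilde X$, hence in $X$. So $f\in\mathcal D(A)$ with $Af=\tilde Af\in\tilde X$. Conversely, let $f\in\mathcal D(A)$ with $Af\in\tilde X$. Take $\beta$ large, in both resolvent sets; this is possible because both semigroups have exponential bounds. Put $g=(\beta-A)f\in\tilde X$ and $h=(\beta-\tilde A)^{-1}g\in\mathcal D(\tilde A)\subset\mathcal D(A)$. Then $(\beta-A)h=g$, and injectivity of $\beta-A$ gives $f=h\in\mathcal D(\tilde A)$. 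In particular $(\beta-\tilde A)^{-1}=(\beta-A)^{-1}|_{\tilde X}$.

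\emph{Part 2: $\tilde X_{-1}\subset X_{-1}$.} The norm of $\tilde X_{-1}$ is $\|(\beta-A)^{-1}f\|_{\tilde X}$. Since $\tilde X\hookrightarrow X$, this dominates $c\|(\beta-A)^{-1}f\|_X=c\|f\|_{X_{-1}}$ for $f\in\tilde X$. So the identity on $\tilde X$ extends to a continuous map $J:\tilde X_{-1}\to X_{-1}$.

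The main obstacle is injectivity of $J$, i.e.\ that the completions are compatible. We argue as follows. The operator $\beta-\tilde A$ extends to a unitary operator from $\tilde X$ onto $\tilde X_{-1}$, and $\beta-A$ extends to a unitary operator from $X$ onto $X_{-1}$. These extensions satisfy $J(\beta-\tilde A)=(\beta-A)\iota$, where $\iota:\tilde X\hookrightarrow X$ is the inclusion. This identity holds on $\mathcal D(\tilde A)$ and then by density and continuity. Now suppose $J\xi=0$. Write $\xi=(\beta-\tilde A)\eta$ with $\eta\in\tilde X$. Then $(\beta-A)\iota\eta=0$, so $\eta=0$ and hence $\xi=0$.

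\emph{Part 3: $\tilde B=B$.} By \eqref{formula_limita} applied to $\tilde\Sigma$, $\tilde B{\rm v}=\lim_{\tau\to0^+}\Phi_\tau(u({\rm v}))/\tau$ in $\tilde X_{-1}$. Apply the continuous map $J$: the same limit exists in $X_{-1}$, and it equals $B{\rm v}$ by uniqueness of limits. Thus, under the identification of Part 2, $J\tilde B=B$.
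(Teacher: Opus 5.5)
Your proposal is correct and follows essentially the same route as the paper: the composition law \eqref{funceqPhi0} is inherited from $X$, and the generator is identified as the part of $A$. Where the paper just cites Engel--Nagel you give the direct resolvent argument, and you also make explicit, via the closed graph theorem, the boundedness of $\Phi_\tau$ into $\tilde X$ that the paper leaves implicit. Part 2 matches the paper too: $J$ extends the identity, and its injectivity comes from the isometric extensions of $\beta-A$ and $\beta-\tilde A$. Part 3 is likewise the same, with $\tilde B=B$ following from uniqueness of the limit in \eqref{formula_limita}.

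One small tidy-up concerns the converse inclusion for \eqref{gen_tilde}. When you write $(\beta-A)f\in\tilde X$ you tacitly use $f\in\tilde X$. This is part of the usual definition of the part of $A$, and in any case it follows here from $f=\mathbb{T}_\tau f-\int_0^\tau\mathbb{T}_s Af\,{\rm d}s$. Indeed $\mathbb{T}_\tau X\subset {\rm Ran}\,\Phi_\tau=R_\Sigma\subset\tilde X$ by null controllability, and the integral is a $\tilde X$-valued integral of $s\mapsto\tilde{\mathbb{T}}_s Af$.
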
 

\begin{proof}
From the facts that  $\mathbb{T}$, $\Phi$ satisfy \eqref{funceqPhi0},   $\tilde {\mathbb{T}}$ is the restriction of $ \mathbb{T}$ to  $ \tilde{X}$ and  \eqref{inclusions} holds, it follows that 
for every $u, v\in L^2([0,\infty);U)$  we have
\begin{equation}\label{funceqPhi01}
   \Phi_{\tau+t} (u\ICONT{\tau}v) = \tilde{\mathbb{T}}_t\Phi_\tau u + \Phi_t v \qquad\qquad(t,\tau\geqslant 0).
\end{equation}
This shows that indeed $\tilde \Sigma$ is a well posed  control system with state space $\tilde X$ and control space $U$.
The fact that the generator $\tilde A$ of $\tilde{\mathbb{T}}$ is given by \eqref{gen_tilde} and \eqref{op_tilde} follows by applying a standard 
semigroup theoretic result, see, for instance, Engel and Nagel \cite[Chapter II, Subsection 2.3]{EN06} or \cite[Proposition 2.4.4]{TucsnakWeiss}. We have thus proved the properties claimed at
the first point in the proposition. 

To tackle the second point, let $\beta$ be a large enough real number, such that $\beta$ is both in the resolvent set of $A$ and in the resolvent set of $\tilde A$. Since the embedding $\tilde X\subset X$ is continuous, it follows that there exists $M>0$ such that for every  $f\in \tilde X$ we have
\[
\|(\beta\mathbb{I}-\tilde A)^{-1} f\|_{\tilde X}\geqslant M\|(\beta\mathbb{I}-\tilde A)^{-1}f\|_X=M\|(\beta\mathbb{I}-A)^{-1}f\|_X.
\]
Hence, for every $f\in \tilde X$ we have
\[
\|f\|_{X_{-1}}
 \geqslant  M\|f\|_{\tilde X_{-1}}.
\]
Therefore the embedding $i$ of $\tilde X$ into $X$ is a continuous operator from $(\tilde X,\|\cdot\|_{\tilde X_{-1}})$ into
$(X,\|\cdot\|_{X_{-1}})$, so it extends uniquely to a continuous linear map
\[
J:\tilde X_{-1}\to X_{-1}.
\]
Moreover, $J$ is injective. Indeed, let $z\in \tilde X_{-1}$ and assume
that $Jz=0$. The operators $(\beta I-A)^{-1}$ and
$(\beta I-\tilde A)^{-1}$ extend by continuity to isometric isomorphisms from
$X_{-1}$ onto $X$ and from $\tilde X_{-1}$ onto $\tilde X$, respectively.
Applying $(\beta I-A)^{-1}$ to the identity $Jz=0$, we obtain
\[
0 = (\beta I-A)^{-1}Jz = i(\beta I-\tilde A)^{-1}z.
\]
Since $i$ is injective, it follows that
\[
(\beta I-\tilde A)^{-1}z=0.
\]
As $(\beta I-\tilde A)^{-1}:\tilde X_{-1}\to \tilde X$ is injective, we conclude
that $z=0$. Thus $J$ is injective so it is a linear isomorphism from $\tilde X_{-1}$ into a subspace of $X_{-1}$.

To prove the third and last claimed conclusion, we recall from \eqref{formula_limita} that
for every ${\rm v}\in U$ the limit $\displaystyle{\lim_{\tau\to 0+} \frac{\Phi_\tau(u({\rm v}))}{\tau}}$ exists in $\tilde X_{-1}$ and in $X_{-1}$
and equals to $\tilde B {\rm v}$ and $B {\rm v}$, respectively. Since $\tilde X_{-1}$ is continuously embedded in $X_{-1}$ it follows that $\tilde B=B$, which ends the proof.
\end{proof}

We end this section with a result concerning the exponential stability of the restriction of an exponentially stable semigroup to an appropriately chosen subspace. 

 \begin{proposition}\label{rem_exp_abs}
Under the assumptions of \Cref{prop_restr}, assume that $\mathbb T$ is exponentially stable on $X$, i.e., there exist $M,\omega>0$ such that
\[
\|\mathbb T_t\|_{\mathcal L(X)}\leqslant M {\rm e}^{-\omega t}
\qquad\qquad (t\geqslant 0).
\]
Then $(\widetilde{\mathbb T}_t)_{t\geqslant 0}$ is exponentially stable on $\widetilde X$. 
\end{proposition}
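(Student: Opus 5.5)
The plan is to exploit null controllability together with the inclusion $R_\Sigma\subset\tilde X$ to get decay in the stronger norm of $\tilde X$ from decay in $X$. I would argue by composing the semigroup with a steering map. Fix $\tau>0$. Null controllability in time $\tau$ means ${\rm Ran}\,\mathbb T_\tau\subset{\rm Ran}\,\Phi_\tau$. By a closed graph argument applied to $\mathbb T_\tau\in\mathcal L(X)$ and the Hilbert space ${\rm Ran}\,\Phi_\tau$ with the norm \eqref{norm_induced}, there is a constant $C_\tau$ such that
\[
\|\mathbb T_\tau z\|_{{\rm Ran}\,\Phi_\tau}\leqslant C_\tau\|z\|_X\qquad (z\in X).
\]
This is the standard factorization behind \Cref{FattSeid}, and it holds because ${\rm Ran}\,\Phi_\tau$ embeds continuously into $X$. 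Since $R_\Sigma={\rm Ran}\,\Phi_\tau$ embeds continuously in $\tilde X$ by \eqref{inclusions}, we obtain $\mathbb T_\tau\in\mathcal L(X,\tilde X)$ with
\[
\|\mathbb T_\tau z\|_{\tilde X}\leqslant K\|z\|_X\qquad (z\in X),
\]
for some $K>0$.

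Next, for $f\in\tilde X$ and $t\geqslant\tau$, I would write $\tilde{\mathbb T}_t f=\mathbb T_\tau\mathbb T_{t-\tau}f$. This gives
\[
\|\tilde{\mathbb T}_t f\|_{\tilde X}\leqslant K\|\mathbb T_{t-\tau}f\|_X\leqslant KM{\rm e}^{-\omega(t-\tau)}\|f\|_X\leqslant KMc\,{\rm e}^{\omega\tau}{\rm e}^{-\omega t}\|f\|_{\tilde X},
\]
where $c$ is the norm of the embedding $\tilde X\subset X$. For $t\in[0,\tau]$, the uniform boundedness of the strongly continuous semigroup $\tilde{\mathbb T}$ on compact intervals gives $\|\tilde{\mathbb T}_t\|_{\mathcal L(\tilde X)}\leqslant M'\leqslant M'{\rm e}^{\omega\tau}{\rm e}^{-\omega t}$. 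Combining the two ranges of $t$ yields exponential stability with the same rate $\omega$.

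The main obstacle is the first step: justifying rigorously that $\mathbb T_\tau$ maps $X$ boundedly into the reachable space equipped with its own norm. I would do this with the closed graph theorem. If $z_n\to z$ in $X$ and $\mathbb T_\tau z_n\to\eta$ in ${\rm Ran}\,\Phi_\tau$, then by the continuous embedding into $X$ we also have $\mathbb T_\tau z_n\to\eta$ in $X$. Hence $\eta=\mathbb T_\tau z$, and the graph is closed. Alternatively, one may simply use that $\mathbb T_\tau$ is closed as a map $X\to\tilde X$, since it is bounded into $X$ and $\tilde X\hookrightarrow X$; this uses directly that the range lies in $\tilde X$.
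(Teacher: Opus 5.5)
Your proof is correct and follows the paper's route: the closed graph theorem gives $\mathbb T_\tau\in\mathcal L(X,\tilde X)$, and then you factor $\tilde{\mathbb T}_t=\mathbb T_\tau\mathbb T_{t-\tau}$ for $t\geqslant\tau$ and use the decay in $X$ together with the embedding $\tilde X\subset X$. Your explicit bound on $\|\tilde{\mathbb T}_t\|_{\mathcal L(\tilde X)}$ for $t\in[0,\tau]$, via uniform boundedness of the $C_0$-semigroup $\tilde{\mathbb T}$ on compact intervals, fills in a step the paper leaves implicit.
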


\begin{proof}
We fix $\tau>0$ and we we remark that from the null controllability of $\Sigma$ and the inclusion $R_\Sigma\subset \tilde X$  it follws that  $\mathbb{T}_\tau$
maps $X$ into $\tilde X$.  Moreover, it is easy to check that $\mathbb T_\tau$, viewed as an operator from $X$ to $\widetilde X$, 
has a closed graph in $X\times \widetilde X$. By the closed graph theorem, it follows that $\mathbb{T}_\tau\in \mathcal{L}(X,\tilde X)$.
Now, let $t\geqslant \tau$ and $x\in \widetilde X$. Using the semigroup property, we obtain
\[
\widetilde{\mathbb T}_t x
=
\mathbb T_\tau \bigl(\mathbb T_{t-\tau}x\bigr).
\]
Hence
\[
\|\widetilde{\mathbb T}_t x\|_{\widetilde X}
\leqslant C_\tau \|\mathbb T_{t-\tau}x\|_X
\leqslant C_\tau M {\rm e}^{-\omega(t-\tau)} \|x\|_X.
\]
Since $x\in \widetilde X$, it follows that
\[
\|\widetilde{\mathbb T}_t x\|_{\widetilde X}
\leqslant C_\tau M c_{\rm e} {\rm e}^{\omega\tau} {\rm e}^{-\omega t}\|x\|_{\widetilde X}
\qquad (t\geqslant \tau),
\]
where $c_{\rm e}$ is the norm of the embedding operator of $\tilde X$ into $X$.
Thus,
\[
\|\widetilde{\mathbb T}_t\|_{\mathcal L(\widetilde X)}
\leqslant C_\tau M c_{\rm e} {\rm e}^{\omega\tau} {\rm e}^{-\omega t}
\qquad (t\geqslant \tau),
\]
so that indeed $\widetilde{\mathbb T}$ is exponentially stable on $\widetilde X$.
\end{proof}

\section{Some background on linear stochastic differential equations in infinite dimensional Hilbert spaces}\label{sec2}

In this section we recall, following Da Prato and Zabczyk \cites{DAP_Z_Book,DAP_Z_Main} (see also Hairer \cite{hairer2023} or Fkirine, Hadd and Rhandi \cite{fkirine2024evolution}) some basic facts
on linear stochastic differential equations in Hilbert spaces. The particularity of the considered class of differential equations is that the white noise acts through an operator with values in a space which is larger then the aimed state space. As already mentioned in \Cref{backsys}, such operators are called unbounded control operators in the control theoretic literature and their use became common knowledge for systems driven by PDEs with inputs acting on the boundary.  

Within this section we continue to use the concepts and the notation introduced in \Cref{backsys}, which means that we consider the Hilbert spaces $U$ (the control space) and $X$ (the state space), which will constantly be identified with their duals. Moreover,
 $A:\mathcal{D}(A)\to X$ is the generator of an operator semigroup
$\mathbb{T}=\left(\mathbb{T}_t\right)_{t\geqslant 0}$ on $X$ and  $B\in \mathcal{L}(U_,X_{-1})$, where the space $X_{-1}$ has been introduced just above \Cref{rem-1}. We recall from \Cref{backsys}, Remark \ref{rem_admissible}, that $B$ is an admissible control operator for $\mathbb{T}$ if and only if
\(\Sigma=\begin{bmatrix} \mathbb{T} & \Phi\end{bmatrix}\), with the family $\Phi=(\Phi_t)_{t\geqslant 0}$ defined in \eqref{Phi_t}, is a well-posed linear control system with state space $X$ and control space $U$, in the sense of \Cref{WPS}. 

With the above notation, we assume that the Hilbert space $U$ and $X$ are separable and we  consider the  stochastic Cauchy problem in $X_{-1}$
\begin{equation}\label{ec_stoh_abs}
{\rm d} Z(t) = A Z(t){\rm d}t + B {\rm d} W(t) \qquad\qquad(t\geqslant 0),    
\end{equation}
\begin{equation}\label{init_stoh_abs}
    Z(0)=Z_0,
\end{equation}
where $W$ is a cylindrical Wiener process on $U$. It is well known, see, for instance, \cite[Theorem 5.4]{DAP_Z_Book} , that, provided that $Z_0\in X_{-1}$, equations \eqref{ec_stoh_abs}, \eqref{init_stoh_abs} have a unique mild solution
in $X_{-1}$ defined by
\begin{equation}\label{duh_stoh}
 Z(t)=  \mathbb{T}_t Z_0 + \int_0^t \mathbb{T}_{t-\sigma} B\, {\rm d} W (\sigma) \qquad\qquad(t\geqslant 0),
\end{equation}
if and only if for some $\tau>0$ we have
\(
\int_0^\tau \|\mathbb{T}_t B\|_{HS(U,X_{-1})}^2 \, {\rm d}t<\infty,
\)
where $\|L\|_{HS(U,X_{-1})}$ stands for the Hilbert-Schmidt norm of an operator $L\in \mathcal{L}(U,X_{-1})$.

In this work we are interested in the situation when the mild solution $Z$ defined by \eqref{duh_stoh} takes values in the space $X$, generally strictly smaller than $X_{-1}$. This property,
called {\em stochastic admissibility} in \cite{abreu2013stochastic} and \cite{fkirine2024evolution}, 
has been characterized in \cite{DAP_Z_Main}.  
We give below this characterization in a form borrowed from \cite{fkirine2024evolution}.

\begin{theorem}\label{th_reg_min}
Assume that the Hilbert spaces $X$ and $U$ are separable, let the operator $A:\mathcal{D}(A)\to X$ be the generator of an operator semigroup on $X$ and let $B\in \mathcal{L}(U,X_{-1})$. Assume that $B$ is an admissible control operator for $\mathbb{T}$. Then, provided that $Z_0\in X$, the mild solution $Z$ of the stochastic equation \eqref{ec_stoh_abs} defined by \eqref{duh_stoh} takes values in $X$
if and only if for some (hence all) $t>0$ the operator $\Phi_t$ defined in \eqref{Phi_t} is Hilbert-Schmidt from $L^2([0,t];U)$ to $X$.
\end{theorem}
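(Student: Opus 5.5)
The plan is to reduce the statement to a computation of second moments through the Itô isometry, and then to read off from this computation that the stochastic convolution is an $X$-valued Gaussian random variable exactly when $\Phi_t$ is Hilbert--Schmidt. Since $Z(t)=\mathbb{T}_tZ_0+W_B(t)$ with $W_B(t):=\int_0^t\mathbb{T}_{t-\sigma}B\,{\rm d}W(\sigma)$, and $\mathbb{T}_tZ_0\in X$ because $Z_0\in X$, it suffices to show that $W_B(t)\in X$ almost surely if and only if $\Phi_t\in HS(L^2([0,t];U),X)$.

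\textbf{Sufficiency.} Fix an orthonormal basis $(e_k)$ of $U$, so that $W(\sigma)=\sum_k \beta_k(\sigma)e_k$ with independent real Brownian motions $\beta_k$. Formally,
\[
W_B(t)=\sum_k\int_0^t \mathbb{T}_{t-\sigma}Be_k\,{\rm d}\beta_k(\sigma).
\]
Take an orthonormal basis $(h_j)$ of $L^2[0,t]$. Then the family $(h_j\otimes e_k)$ is an orthonormal basis of $L^2([0,t];U)$. Let $R_t$ denote the time reversal $(R_tu)(\sigma)=u(t-\sigma)$, which is unitary. We have
\[
\Phi_t(R_t(h_j\otimes e_k))=\int_0^t h_j(\sigma)\,\mathbb{T}_{\sigma}Be_k\,{\rm d}\sigma \in X
\]
by admissibility. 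The key identity I would establish is
\[
\|\Phi_t\|_{HS}^2=\sum_{j,k}\Bigl\|\int_0^t h_j(\sigma)\mathbb{T}_\sigma Be_k\,{\rm d}\sigma\Bigr\|_X^2 .
\]
This quantity plays the role of ``$\int_0^t\|\mathbb{T}_\sigma B\|_{HS(U,X)}^2{\rm d}\sigma$'', even though $\mathbb{T}_\sigma Be_k$ need not lie in $X$ pointwise. To make this rigorous, I would approximate each Brownian motion $\beta_k$ by its Lévy--Ciesielski expansion $\beta_k(s)=\sum_j \xi_{jk}\int_0^s h_j$, with $(\xi_{jk})$ i.i.d.\ standard Gaussians. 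This gives
\[
W_B(t)=\sum_{j,k}\xi_{jk}\,\Phi_t(R_t(h_j\otimes e_k)),
\]
with the series converging in $L^2(\Omega;X_{-1})$. If $\Phi_t$ is Hilbert--Schmidt into $X$, the same series converges in $L^2(\Omega;X)$, with
\[
\mathbb{E}\|W_B(t)\|_X^2=\|\Phi_t\|_{HS}^2 .
\]
Since $X\hookrightarrow X_{-1}$ continuously, the two limits agree, and therefore $W_B(t)\in X$ almost surely.

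\textbf{Necessity.} Suppose $W_B(t)\in X$ almost surely. Then $W_B(t)$ is a centered Gaussian random variable in $X_{-1}$ that is supported in $X$. Because the Borel $\sigma$-algebras are compatible (by Lusin--Souslin, since the inclusion is injective and continuous), it is a Gaussian measure on $X$ as well. Its covariance operator $Q_t$ on $X$ is therefore trace class by Fernique's theorem. In the partial sums above, the coefficients $\xi_{jk}$ are independent. By the Itô--Nisio theorem, almost sure convergence in $X_{-1}$ of a series of independent symmetric terms whose limit lies in $X$ upgrades to convergence in $X$; equivalently, one can compare covariances. This gives $\operatorname{tr}Q_t=\sum_{j,k}\|\Phi_tR_t(h_j\otimes e_k)\|_X^2<\infty$, i.e.\ $\Phi_t\in HS$.

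\textbf{Independence of $t$.} This follows from the composition property \eqref{funceqPhi0}: $\Phi_{t+s}$ is a sum of $\mathbb{T}_s\Phi_t$ and $\Phi_s$ acting on orthogonal pieces of the input, and the Hilbert--Schmidt class is an ideal.

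The main obstacle is the necessity direction, specifically justifying that an $X_{-1}$-valued Gaussian living in $X$ has covariance in $X$ equal to $\Phi_t\Phi_t^*$. I would handle this by identifying the covariance through the functionals $x\mapsto\langle x,y\rangle$ for $y$ in the dense subspace $\mathcal{D}(A^*)$, where all computations in $X_{-1}$ are legitimate.
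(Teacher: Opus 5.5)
Your proof is correct, and it follows a different route from the one the paper relies on. Note that the paper does not prove this statement itself; it imports it from \cite{DAP_Z_Main} in the formulation of \cite{fkirine2024evolution}.

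The mechanism behind that citation, which the paper makes visible in its proof of \Cref{prop_corectata}, is duality. By Weiss's representation theorem, $(\Phi_t^*y)(\sigma)=B_\Lambda^*\mathbb{T}_{t-\sigma}^*y$. Hence $\|\Phi_t\|_{{\rm HS}}^2=\int_0^t\|B_\Lambda^*\mathbb{T}_\sigma^*\|_{{\rm HS}(X,U)}^2\,{\rm d}\sigma$, and the Da Prato--Zabczyk criterion is phrased through this integral (\eqref{good_form} with $\gamma=0$).

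You bypass the Yosida extension and any pointwise meaning of $\mathbb{T}_\sigma B$ in $X$. You expand the noise along an orthonormal basis of $L^2([0,t];U)$, whose images under $\Phi_t$ lie in $X$ by admissibility. Sufficiency is then orthogonality of a Gaussian series. Necessity comes from three steps: the $X$-covariance $Q$ of $W_B(t)$ (bounded by Fernique) satisfies $\langle Qy,y\rangle=\|\Phi_t^*y\|^2$ for $y\in\mathcal{D}(A^*)$; this extends to all $y\in X$ by continuity, which is legitimate precisely because $\Phi_t\in\mathcal{L}(L^2([0,t];U),X)$; and then ${\rm tr}\,Q=\|\Phi_t\|_{\rm HS}^2$.

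This is the Hilbert-space case of the van Neerven--Weis $\gamma$-radonifying criterion. Your route is elementary and self-contained. The duality route has the advantage of producing the time-pointwise density $\sigma\mapsto\|B_\Lambda^*\mathbb{T}_\sigma^*\|_{\rm HS}$, which is what the weighted continuity criterion in \Cref{prop_corectata} needs.

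A few small repairs are needed.

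\textbf{The $R_t$ slip.} With $\xi_{jk}=\int_0^t h_j\,{\rm d}\beta_k$, which is what your expansion of $\beta_k$ means, the pathwise identity is $W_B(t)=\sum_{j,k}\xi_{jk}\,\Phi_t(h_j\otimes e_k)$, without $R_t$. The reversed form holds only after redefining the coefficients through $R_th_j$. This is harmless for the norm identity, since the Hilbert--Schmidt norm does not depend on the basis.

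\textbf{The Itô--Nisio step.} It needs the variant in which convergence is tested only against a separating subspace of functionals. A priori you control $\langle S_n,y\rangle_X$ only for $y\in\mathcal{D}(A^*)$. The covariance comparison makes this unnecessary, so lead with it.

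\textbf{Independence of $t$.} For ``some hence all $t$'', spell out both directions. First, $\Phi_s=\Phi_{t+s}J$ with the isometry $Jv=0\ICONT{t}v$, which passes to smaller times. Second, $\Phi_{t+s}(u\ICONT{t}v)=\mathbb{T}_s\Phi_tu+\Phi_sv$ with $s\leqslant t$ gives $t\mapsto 2t$. Iterating covers all times.
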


A question of interest is whether the mild solution $Z$ of  \eqref{ec_stoh_abs} has a representative which is continuous (in time) with values in $X$.
A sufficient condition to have this property is given in \Cref{prop_corectata} below. To our best knowledge, this result, although implicitely contained in \cite{DAP_Z_Main},  is not explicitly stated in the existing literature. Therefore, for the convenience of the reader and with no claim of originality, we provide a short proof.

\begin{proposition}\label{prop_corectata}
Under the assumptions of \Cref{th_reg_min}, let $0 < \gamma < \frac12$ and let $(\Phi_\tau)_{\tau\geqslant 0}$ be the input maps of the system $(A,B)$. Let $(L_\tau)_{\tau> 0}$ 
be the linear operators defined on $L^\infty([0,\tau],U)$ by
\begin{equation}\label{def_tilde_phi}
L_\tau u = \Phi_\tau (u_\gamma) \qquad\qquad(\tau>0),    
\end{equation}
where 
\[
u_\gamma(\sigma)=(\tau-\sigma)^{-\gamma} u(\sigma) \qquad\qquad(\sigma\in (0,\tau)).
\]
Assume that for some $\tau>0$ we have that $L_\tau $ extends to a Hilbert-Schmidt operator, still denoted by $L_\tau $,
from $L^2([0,\tau];U)$ to $X$. Then the mild solution $Z$ of  \eqref{ec_stoh_abs} has a representative which is continuous (in time) with values in $X$. 
\end{proposition}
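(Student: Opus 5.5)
The plan is to use the Da Prato--Kwapień--Zabczyk factorization method. The term $\mathbb{T}_t Z_0$ is continuous in $X$ because $Z_0\in X$, so only the stochastic convolution $W_B(t)=\int_0^t \mathbb{T}_{t-\sigma}B\,{\rm d}W(\sigma)$ needs attention. Using $\int_\sigma^t (t-s)^{\gamma-1}(s-\sigma)^{-\gamma}\,{\rm d}s=\frac{\pi}{\sin \pi\gamma}$, we write
\[
W_B(t)=\frac{\sin\pi\gamma}{\pi}\int_0^t (t-s)^{\gamma-1}\mathbb{T}_{t-s}Y(s)\,{\rm d}s,
\qquad
Y(s)=\int_0^s (s-\sigma)^{-\gamma}\mathbb{T}_{s-\sigma}B\,{\rm d}W(\sigma).
\]
At first this identity holds in $X_{-1}$, where everything is well defined by the stochastic Fubini theorem; the point is to show that $Y$ actually lives in $X$ with good integrability.

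\textbf{Step 1: Gaussian bounds for $Y(s)$.} For fixed $s$, $Y(s)$ is a centered Gaussian variable. Its covariance in $X$ is governed by the operator $u\mapsto \int_0^s (s-\sigma)^{-\gamma}\mathbb{T}_{s-\sigma}Bu(\sigma)\,{\rm d}\sigma = L_s u$. Hence $Y(s)\in X$ a.s. and $\mathbb{E}\|Y(s)\|_X^2=\|L_s\|_{HS(L^2([0,s];U),X)}^2$, provided $L_s$ is Hilbert--Schmidt. This is the same mechanism as in \Cref{th_reg_min}. We need this bound uniformly for $s\in[0,\tau]$. By hypothesis $L_\tau$ is Hilbert--Schmidt. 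For $s<\tau$ we reduce to $L_\tau$ via time shift: set $v(\sigma)=u(\sigma-(\tau-s))$ on $[\tau-s,\tau]$ and $v=0$ before. Then $(\tau-\sigma)^{-\gamma}$ matches $(s-\sigma')^{-\gamma}$, and $L_s u = L_\tau v$ by the concatenation identity \eqref{funceqPhi0}, since $\Phi_{\tau-s}(0)=0$. So $L_s=L_\tau\circ S_s$ with $S_s$ an isometry, giving $\|L_s\|_{HS}\leqslant\|L_\tau\|_{HS}$. For $s>\tau$ we split $[0,s]$ into pieces of length at most $\tau$. On pieces away from $s$, the weight is bounded and the contribution is $\mathbb{T}_{\cdot}$ applied to a $\Phi$-type term. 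That $\Phi_\tau$ is Hilbert--Schmidt follows from $\Phi_\tau u=L_\tau((\tau-\cdot)^\gamma u)$ with a bounded multiplier. This gives $\sup_{s\in[0,T]}\mathbb{E}\|Y(s)\|_X^2<\infty$ for every $T$. By Gaussianity, $\sup_s\mathbb{E}\|Y(s)\|_X^{p}<\infty$ for all $p$. We also need measurability of $Y$ as an $X$-valued process, which follows from its measurability in $X_{-1}$ and the continuous injection $X\subset X_{-1}$.

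\textbf{Step 2: pathwise regularity.} Choose $p$ with $\gamma>1/p$. By Fubini, $Y\in L^p([0,T];X)$ almost surely. The standard lemma for the operator $R_\gamma f(t)=\int_0^t (t-s)^{\gamma-1}\mathbb{T}_{t-s}f(s)\,{\rm d}s$ says that it maps $L^p([0,T];X)$ into $C([0,T];X)$ when $\gamma>1/p$. Its proof uses only the strong continuity and local boundedness of $\mathbb{T}$ on $X$, together with Hölder's inequality. Hence $R_\gamma Y$ is a continuous $X$-valued process. It agrees with $W_B(t)$ a.s. for each $t$, as elements of $X_{-1}$ and hence of $X$, so it is the desired version.

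The main obstacle is making the factorization rigorous when $B$ is unbounded, i.e.\ justifying that $Y(s)$, defined a priori in $X_{-1}$, equals the $X$-valued Gaussian limit described by $L_s$. The stochastic Fubini interchange must also be justified; it holds in $X_{-1}$ because $\int_0^\tau\|\mathbb{T}_tB\|^2_{HS(U,X_{-1})}\,{\rm d}t<\infty$. I would handle this by approximating $W$ via finitely many modes $\sum_{k\leqslant n}\beta_k e_k$, where everything is an honest Bochner/Itô integral in $X_{-1}$. The Hilbert--Schmidt property of $L_s$ then gives $L^2(\Omega;X)$-Cauchy convergence, and the limits coincide in $X_{-1}$.
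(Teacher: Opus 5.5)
Your argument is correct, but it is organized differently from the paper's proof.

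\textbf{The paper's route.} The paper never runs the factorization itself. It computes the adjoint $(L_\tau^*\psi)(\sigma)=(\tau-\sigma)^{-\gamma}B_\Lambda^*\mathbb{T}^*_{\tau-\sigma}\psi$ via the Yosida extension $B^*_\Lambda$ and Weiss's representation theorem. From $\sum_n\|L_\tau^*e_n\|^2<\infty$ it reads off the Da Prato--Zabczyk condition $\int_0^\tau(\tau-\sigma)^{-2\gamma}\|B^*_\Lambda\mathbb{T}^*_{\tau-\sigma}\|^2_{{\rm HS}(X,U)}\,{\rm d}\sigma<\infty$. It then cites Theorems 2.1 and 2.3 of \cite{DAP_Z_Main}, whose proofs are exactly the factorization argument you write out.

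\textbf{What differs in yours.} You are opening that black box. The identity $\mathbb{E}\|Y(s)\|_X^2=\|L_s\|^2_{{\rm HS}}$, obtained from your finite-mode approximation, replaces the adjoint computation.

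\textbf{What each route buys.} Yours is self-contained. It avoids $B^*_\Lambda$ altogether; the paper's inclusion ${\rm Ran}\,\mathbb{T}^*_t\subset\mathcal{D}(B^*_\Lambda)$ and its formula for $L_\tau^*$ are only sketched. It also makes explicit how a hypothesis at the single time $\tau$ gives bounds uniform on $[0,T]$: through $L_s=L_\tau\circ S_s$ for $s\leqslant\tau$ and the concatenation identity for $s>\tau$. The paper leaves this to the cited theorems. The paper's route buys brevity.

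\textbf{Two details to state precisely.}
\begin{enumerate}
\item For your finite-mode approximants to be $X$-valued, you need $\mathbb{T}_tBe_k\in X$ for a.e.\ $t$, with $\int_0^s(s-\sigma)^{-2\gamma}\|\mathbb{T}_{s-\sigma}Be_k\|_X^2\,{\rm d}\sigma<\infty$. This holds because a Hilbert--Schmidt operator from $L^2[0,s]$ to $X$ has a kernel in $L^2([0,s];X)$. That kernel must agree a.e.\ with $(s-\sigma)^{-\gamma}\mathbb{T}_{s-\sigma}Be_k$ as elements of $X_{-1}$.
\item The integrability that justifies defining $Y(s)$ and the stochastic Fubini interchange in $X_{-1}$ is the weighted bound $\sup_{s\leqslant T}\int_0^s(s-\sigma)^{-2\gamma}\|\mathbb{T}_{s-\sigma}B\|^2_{{\rm HS}(U,X_{-1})}\,{\rm d}\sigma<\infty$. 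It is not the unweighted condition you quote. It does follow from the previous point, your Step 1 and the continuous embedding $X\subset X_{-1}$.
\end{enumerate}
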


\begin{proof}
Under our assumptions, the adjoint $L_\tau^*$ of $L_\tau$ is a Hilbert-Schmidt operator from $X$ to $L^2([0,\tau];U)$. 
To compute $L_\tau^*$ we introduce the Yosida extension of $B^*$, denoted $B^*_\Lambda$, defined by
\[
\mathcal{D}(B_\Lambda^*) =\{x \in X \ \ |\ \lim_{\lambda\to \infty} \lambda B_\Lambda^* (\lambda\mathbb{I}- A)^{-1}x \ \ \hbox{\rm exists\ in}\ \ U\},
\]
\[
B_\Lambda^* x =
\lim_{\lambda\to \infty} \lambda B^* (\lambda \mathbb{I}- A)^{-1}x \qquad\qquad(x\in \mathcal{D}(B^*_\Lambda)).
\]
Since $B^*$ is an admissible observation operator for $\mathbb{T}^*$, it follows from the representation
theorem of Weiss \cite[Theorem 4.5]{weiss1989admissible} that ${\rm Ran}\, \mathbb{T}_t^* \subset \mathcal{D}(B_\Lambda^*)$. 
Combining this fact with the methodology employed in the proof of Theorem 4.4.3 from \cite{TucsnakWeiss}, it is not difficult to check that this adjoint is 
given by     
\begin{equation}\label{adj_pondere}
(L_\tau^*\psi)(\sigma)=(\tau-\sigma)^{-\gamma} B_\Lambda^* \mathbb{T}_{\tau-\sigma}^*\psi \qquad\qquad(\sigma\in (0,\tau),\ \psi\in X).
\end{equation}
Let $(e_n)_{n\in \mathbb{N}}$ be an orthonormal basis in $X$. Since $L_\tau^*$  is  Hilbert-Schmidt from $X$ to $L^2([0,\tau];U)$
it follows that \
\(
\sum_{n\in \mathbb{N}} \left\| L_\tau^* e_n\right\|^2_{L^2([0,\tau],U)} <\infty.
\) 
Combining the above estimate and \eqref{adj_pondere} int follows that 
\[
\int_0^\tau \sum_{n\in \mathbb{N}} (\tau-\sigma)^{-2\gamma} \left\| B_\Lambda^* \mathbb{T}_{\tau-\sigma}^*e_n \right\|_U^2\, {\rm d}\sigma <\infty.
\]
The above inequality can be equivalently written
\begin{equation}\label{good_form}
\int_0^\tau  (\tau-\sigma)^{-2\gamma} \left\| B_\Lambda^* \mathbb{T}_{\tau-\sigma}^* \right\|_{{\rm HS}(X,U)}^2\, {\rm d}\sigma <\infty,
\end{equation}
which implies the conclusion by applying Theorems 2.3 and 2.1 from \cite{DAP_Z_Main}. 
\end{proof}

We next provide two examples which are the counterparts of \Cref{ex_heat_det} and \Cref{ex_heat_det_neu}
when the boundary data are given by white noises.

\begin{example}\label{ex_heat_stoh}
The system \eqref{PROBHEAT1_stoch} can be written in the form \eqref{ec_stoh_abs}, \eqref{init_stoh_abs}, with the choice of the spaces $X=X_D$, $U=\mathbb{C}^2$ and of the operators 
$A=A_D,\ B=B_D$, where $X_D$, $A_D$ and $B_D$ have been defined in  \Cref{ex_heat_det}. 
Let $\Phi=(\Phi_t^D)_{t\geqslant 0}$ be the family of maps defined by \eqref{Phi_t} with $B=B_D$ and $\mathbb{T}=\mathbb{T}^D$ (the operator semigroup generated by $A_D$ on $X_D$).
 As shown in \cite[Proposition 3.1]{DAP_Z_Main}, with this choice of spaces and operators, given $\psi_0\in X_D$, the initial value problem \eqref{ec_stoh_abs}, \eqref{init_stoh_abs} admits a unique solution
mild solution, defined by 
\begin{equation}\label{duh_stoh_dir}
 \psi(t)=  \mathbb{T}_t^D \psi_0 + \int_0^t \mathbb{T}_{t-\sigma}^D B_D \, {\rm d} W (\sigma) \qquad\qquad(t\geqslant 0),
\end{equation}
and this solution has a continuous representative with values in $X_D$. 
\end{example}

\begin{example}\label{rem_neu_stoch}
Similar results are known for the  heat  equation on $[0,\pi]$ with noise in the Neumann boundary conditions. More precisely, we consider the system \eqref{PROBHEAT1_stoch_neu}, where $\{W^i_t ,\ t \geqslant 0\}$, with $i \in \{ 1, 2\}$, are independent standard real Wiener processes.
It is known, see \cite[Proposition 3.2]{DAP_Z_Main}, that the solutions of \eqref{PROBHEAT1_stoch_neu} have an $X_N=L^2[0,\pi]$-continuous version for each   $\psi_0\in X_N$. Moreover, this solution writes
\begin{equation}\label{duh_stoh_neu}
 \psi(t)=  \mathbb{T}_t^N \psi_0 + \int_0^t \mathbb{T}_{t-\sigma}^N B_N \, {\rm d} W (\sigma) \qquad\qquad(t\geqslant 0),
\end{equation}
where $\mathbb{T}_t^N$ and $B_N$ have been defined in \Cref{ex_heat_det_neu}.
\end{example}

We end this section by a result relating the regularity of the processes defined by \eqref{duh_stoh} in terms
of regularity properties for the pair $(A,B)$.

\begin{proposition}\label{prop_reg_stoch}
With the notation and under the assumptions in \Cref{prop_restr}, suppose that   for some (hence all) $t>0$ the operator 
$\Phi_t$ defined in \eqref{Phi_t} is Hilbert-Schmidt from $L^2([0,t];U)$ to $X$ and from $L^2([0,t];U)$ to $\tilde X$.
Then, for every $Z_0\in \tilde X$, the process $Z$ solving \eqref{ec_stoh_abs}, \eqref{init_stoh_abs} and defined by \eqref{duh_stoh}
takes values in $\tilde X$.
Moreover, if for some $\tau,\ \gamma>0$ the operator  $L_{\tau,\gamma}$ defined by \eqref{def_tilde_phi} is Hilbert-Schmidt
from $L^2([0,\tau];U)$ to $\tilde X$ then, for every $Z_0\in \tilde X$, the process $Z$ admits a representative which is continuous in time with values in $\tilde X$.
\end{proposition}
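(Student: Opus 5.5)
The plan is to reduce both claims to the abstract results already available, \Cref{th_reg_min} and \Cref{prop_corectata}, by applying them to the restricted system $\tilde\Sigma=\begin{bmatrix}\tilde{\mathbb T} & \Phi\end{bmatrix}$ furnished by \Cref{prop_restr}. By the first point of \Cref{prop_restr}, $\tilde\Sigma$ is a well-posed control system on $\tilde X$ with control space $U$. By its third point, the associated control operator $\tilde B\in\mathcal L(U,\tilde X_{-1})$ coincides with $B$, and by its second point $\tilde X_{-1}$ embeds continuously and injectively into $X_{-1}$. Hence the stochastic equation \eqref{ec_stoh_abs} posed in $\tilde X_{-1}$ with the pair $(\tilde A,\tilde B)$ is a restriction of the one posed in $X_{-1}$ with $(A,B)$. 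The input maps of $\tilde\Sigma$ are the same operators $\Phi_t$, now viewed as maps into $\tilde X$.

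First claim. Fix $Z_0\in\tilde X$. Since $\Phi_t$ is Hilbert--Schmidt from $L^2([0,t];U)$ to $\tilde X$ and $\tilde B$ is admissible for $\tilde{\mathbb T}$, \Cref{th_reg_min} applied on $\tilde X$ shows that the mild solution
\[
\tilde Z(t)=\tilde{\mathbb T}_tZ_0+\int_0^t\tilde{\mathbb T}_{t-\sigma}\tilde B\,{\rm d}W(\sigma)
\]
takes values in $\tilde X$. It remains to identify $\tilde Z$ with $Z$ from \eqref{duh_stoh}. The deterministic parts agree because $\tilde{\mathbb T}_t=\mathbb T_t|_{\tilde X}$. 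For the stochastic convolutions, the plan is to use that the extension $J:\tilde X_{-1}\to X_{-1}$ from the proof of \Cref{prop_restr} intertwines the extended semigroups. Indeed, $J(\beta I-\tilde A)=(\beta I-A)i$ on $\tilde X$, and combining this with $\tilde{\mathbb T}_t=\mathbb T_t|_{\tilde X}$ gives $J\tilde{\mathbb T}_t=\mathbb T_tJ$ on $\tilde X_{-1}$. Therefore $J\tilde{\mathbb T}_{t-\sigma}\tilde B=\mathbb T_{t-\sigma}B$ as operators from $U$ into $X_{-1}$. A bounded linear operator commutes with the stochastic integral of Hilbert--Schmidt-valued integrands, so $J\tilde Z(t)=Z(t)$ almost surely, and the embedding $\tilde X\subset X$ lets us view $Z(t)$ as an element of $\tilde X$. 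The hypothesis that $\Phi_t$ is also Hilbert--Schmidt into $X$ guarantees, again by \Cref{th_reg_min}, that $Z(t)\in X$, so the identification takes place consistently in $X$ as well.

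Second claim. We apply \Cref{prop_corectata} directly to the system $(\tilde A,\tilde B)$ on $\tilde X$. The hypothesis is exactly that $L_{\tau,\gamma}=\Phi_\tau((\tau-\cdot)^{-\gamma}\,\cdot)$ is Hilbert--Schmidt into $\tilde X$; here one restricts to $\gamma\in(0,\frac12)$, as in that proposition. This yields a version of $\tilde Z$ that is continuous in time with values in $\tilde X$, and by the identification above it is a version of $Z$. The term $\tilde{\mathbb T}_tZ_0$ is continuous in $\tilde X$ because $\tilde{\mathbb T}$ is a $C_0$-semigroup on $\tilde X$.

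The main obstacle is the identification step. One must check carefully that the adjoint computation \eqref{adj_pondere} inside \Cref{prop_corectata}, with $\tilde B^*_\Lambda$ and $\tilde{\mathbb T}^*$ taken relative to the $\tilde X$ inner product, is legitimate; this only requires the admissibility of $\tilde B$, which holds by well-posedness of $\tilde\Sigma$. One must also check that the intertwining $J\tilde{\mathbb T}_t=\mathbb T_tJ$ holds on all of $\tilde X_{-1}$. For the latter I would verify it on the dense set $\tilde X$ and extend by continuity.
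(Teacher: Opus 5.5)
Your proposal is correct and follows the paper's proof. Both apply \Cref{th_reg_min} to the restricted system $\tilde\Sigma$ from \Cref{prop_restr}, identify $\tilde Z$ with $Z$ via $\tilde{\mathbb T}=\mathbb T|_{\tilde X}$ and $\tilde B=B$, and invoke \Cref{prop_corectata} on $\tilde X$ for the continuous version; your intertwining argument $J\tilde{\mathbb T}_t=\mathbb T_tJ$ (with $J\tilde B=B$) just makes explicit the identification $\tilde Z=Z$, which the paper asserts in one line.
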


\begin{proof}
    Under our assumptions we can apply \Cref{th_reg_min} to assert that for every $Z_0\in \tilde X$ the process $\tilde Z$ defined by
    \begin{equation}\label{duh_stoh_tilde}
 \tilde Z(t)=  \tilde{\mathbb{T}}_t Z_0 + \int_0^t \tilde{\mathbb{T}}_{t-\sigma} \tilde B\, {\rm d} W (\sigma) \qquad\qquad(t\geqslant 0),
\end{equation}
takes values in $\tilde X$. Since $\tilde{\mathbb{T}}$ is the restriction of $\mathbb{T}$ to $\tilde X$ and, according to \Cref{prop_restr}
we have that $\tilde B=B$ it follows that $\tilde Z=Z$, where $Z$ defined by \eqref{duh_stoh}. Consequently, $Z$ takes values in $\tilde X$.

Finally, if $L_{\tau,\gamma}$  is Hilbert-Schmidt from $L^2([0,\tau];U)$ to $\tilde X$ then, according to \Cref{prop_corectata}, the process $\tilde Z$
has a representative which is continuous in time with values in $\tilde X$. Since $Z=\tilde Z$, the second conclusion follows. 
\end{proof}

\section{From Bergman spaces to the boundary controlled heat equation}\label{sec_rhombus}

In this section we recall some  known results on the Bergman spaces, namely those appearing in the statements of our main results. We focus on the case, already considered in the introduction,
when the functions in these spaces are defined on the rhombus $D_\Theta$ introduced in  \eqref{rtheta} or in an infinite sector
of the complex plane.  More precisely, for $\Theta\in \left(0,\frac{\pi}{2}\right)$ we consider the weighted Bergman spaces  \(A^2_\delta\left(D_\Theta\right)\)
introduced in \eqref{fara_rho}
and we define the Bergman spaces 
\begin{equation}\label{sec_drept}
A^2 \left(\Delta_\Theta\right)=\left\{f\in {\rm Hol} \left(\Delta_\Theta\right)\ \ |\ \ 
\int_{\Delta_\Theta} |f(s)|^2  \, {\rm d }A(s) <\infty\right\},
\end{equation}
\begin{equation}\label{sec_stang}
A^2 \left(\pi-\Delta_\Theta\right)
=\left\{f\in {\rm Hol} \left(\pi-\Delta_\Theta\right)\ \ |\ \ 
\int_{\pi-\Delta_\Theta} |f(s)|^2  \, {\rm d}A(s) <\infty\right\},
\end{equation}
where
\begin{equation}
\Delta_\Theta=\{s\in \mathbb{C}\ \ |\ \ -\Theta<\arg s <\Theta \}.    
\end{equation}
As in the previous section we identify a function $f$ in one of the above spaces with its trace on $(0,\infty)$, or $(-\infty,\pi)$. More precisely, the meaning of the assertion $f\in A^2(\Delta_\Theta)$
is: $f$ is real analytic on $(0,\infty)$ and admits an extension which is holomorphic and square integrable (with respect to the surface measure) on $\Delta_\Theta$. This extension will be still denoted by $f$.

The relevance of these spaces in the study of the heat equation on a half line, with nonhomogeneous
Dirichlet boundary conditions at its extremity, has been first highlighted in  Aikawa, Hayashi, and Saitoh \cite{Saitoh}, from
which we borrow the complex analytic tools used in this section. Moreover, the main result in this section reveals a new connection between the weighted Bergman spaces  \(A^2_\delta\left(D_\Theta\right)\) and the input maps of the system described by the heat equation with Dirichlet boundary controls. To prepare the proof of this main result we first give the proposition below, which provides information on the integrability of the traces on $(0,\pi)$ of functions in \(A^2_\delta\left(D_\Theta\right)\). 

\begin{proposition}\label{lem_intoarsa}
Let  $\Theta\in \left(0,\frac{\pi}{2}\right)$. Then for every $\delta \in (0,1)$ there exists a constant $C_{\Theta,\delta}>0$ such that
\begin{multline}\label{int_pe_raze}
\int_0^{\pi} t^{1+\delta} (\pi-t)^{1+\delta} |f(t)|^2 \, {\rm d}t
\leqslant C_{\Theta,\delta} \|f\|^2_{A^2_\delta(D_\Theta)}
\qquad\quad\left(f\in A^2_\delta(D_\Theta)\right).
\end{multline}
\end{proposition}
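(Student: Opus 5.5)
We use the sub-mean-value property of $|f|^2$, with $f$ holomorphic, on discs centred at real points whose radius is proportional to the distance to the nearest vertex. Since $\Theta$ is fixed, there is $c=c_\Theta\in(0,\frac12)$ such that for every $t\in(0,\pi)$ the disc $B_t:=B(t,c\,m(t))$, with $m(t)=\min(t,\pi-t)$, lies in $D_\Theta$. Near the vertex $0$ the rhombus contains the sector $\{|\arg s|<\Theta\}$ intersected with $|s|<\frac{\pi}{2}$, and the symmetric statement holds near $\pi$. One may take $c=\frac12\sin\Theta$, possibly reduced so that the discs stay in the correct half.

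Subharmonicity of $|f|^2$ gives
\[
|f(t)|^2\leqslant \frac{1}{\pi c^2 m(t)^2}\int_{B_t}|f(s)|^2\,{\rm d}A(s).
\]
On $B_t$ we have $|s|\asymp t$ and $|\pi-s|\asymp \pi-t$ with constants depending only on $c$. Hence $\rho_\delta(s)\asymp t^\delta(\pi-t)^\delta$ there. Note also $t(\pi-t)\asymp m(t)$, with constants depending only on $\pi$. Combining these,
\[
t^{1+\delta}(\pi-t)^{1+\delta}|f(t)|^2\leqslant C\, m(t)^{-1}\int_{B_t}|f(s)|^2\rho_\delta(s)\,{\rm d}A(s).
\]

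Next we integrate in $t$ over $(0,\pi)$ and apply Fubini. For a fixed $s\in D_\Theta$, we must bound
\[
\int_{\{t:\ s\in B_t\}} m(t)^{-1}\,{\rm d}t
\]
uniformly in $s$. The condition $s\in B_t$ forces $|s-t|<c\,m(t)$. With $c<\frac12$, this gives $m(t)\asymp |s|$ when $s$ is near $0$, or $m(t)\asymp|\pi-s|$ when $s$ is near $\pi$. The set of admissible $t$ is then contained in an interval of length $\lesssim m(t)$, namely $t\in(|s|/(1+c),\,|s|/(1-c))$ in the region near $0$. Therefore the integral is bounded by a constant depending only on $c$.

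This yields the estimate \eqref{int_pe_raze}, with $C_{\Theta,\delta}$ depending on $\Theta$ through $c$ and on $\delta$ through the comparability constants of the weight. In fact the argument works for every $\delta\geqslant0$, and the restriction $\delta<1$ is not needed. The only step needing care is the geometric one: checking that the discs fit inside the rhombus, and that the overlap count in the Fubini step is uniform, in particular near the two vertices and at the midpoint where $m$ switches. Both reduce to elementary estimates on the angle $\Theta$.
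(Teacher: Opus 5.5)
Your argument is correct, but it takes a genuinely different route from the paper's. The paper does not work on $D_\Theta$ directly. It first uses the separation-of-singularities theorems of Hartmann--Orsoni to write $s^{\delta/2}(\pi-s)^{\delta/2}f=g_0+g_\pi$, with $g_0\in A^2(\Delta_\Theta)$, $g_\pi\in A^2(\pi-\Delta_\Theta)$ and norms controlled by $\|f\|_{A^2_\delta(D_\Theta)}$. It then applies on each sector the trace inequality of Aikawa--Hayashi--Saitoh, which is essentially $\int x|g(x)|^2\,{\rm d}x\lesssim\|g\|^2_{A^2(\Delta_\Theta)}$. You instead prove the estimate from scratch: subharmonicity of $|f|^2$ on Whitney-type discs, comparability of $\rho_\delta$ on each disc, and a bounded-overlap count via Tonelli.

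The two geometric points you flag are indeed elementary. Since $D_\Theta$ is an intersection of four half-planes, the distance from $t\in(0,\pi)$ to $\partial D_\Theta$ is exactly $m(t)\sin\Theta$, so any $c<\sin\Theta$ works. The midpoint causes no trouble if you use both consequences of $|s-t|<c\,m(t)$ at once, namely $\bigl|\,|s|-t\,\bigr|<ct$ and $\bigl|\,|\pi-s|-(\pi-t)\,\bigr|<c(\pi-t)$. Together these give $m(t)>\min(|s|,|\pi-s|)/(1+c)$, and they confine $t$ to an interval of length at most $\frac{2c}{1-c^2}\min(|s|,|\pi-s|)$. Hence $\int_{\{t:\,s\in B_t\}}m(t)^{-1}\,{\rm d}t\leqslant\frac{2c}{1-c}$ uniformly in $s$.

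Your approach gives a self-contained proof with explicit constants and avoids the fairly deep decomposition theorem. It also makes plain that the restriction $\delta\in(0,1)$ plays no role in this estimate: any $\delta\geqslant 0$ works, and $\delta=0$ is just the rhombus analogue of the sector trace estimate the paper imports. The paper's route is shorter only because it delegates the analysis to those external results. In exchange, it keeps the estimate tied to the sector decomposition underlying the reachable-space theory on which the whole paper rests.
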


\begin{proof}
    Applying Theorems 1.4 and 1.5 from \cite{hartmann2021separation} it follows that every \(f\in A^2_\delta(D_\Theta)\) 
    satisfies
\begin{equation}\label{desco_2}
s^\frac{\delta}{2} (\pi-s)^\frac{\delta}{2}f(s)=g_0(s)+g_\pi (s) \qquad\qquad(s\in D_\Theta),    
\end{equation}
with $g_0\in A^2(\Delta_\Theta)$ and $g_\pi\in A^2(\pi-\Delta_\Theta)$
and 
\begin{equation}\label{eq:estg0g1f}
\|g_0\|_{A^2(\Delta_\Theta)}^2+\|g_\pi\|_{A^2(\pi-\Delta_\Theta)}^2
\leqslant K \|f\|^2_{A^2_\delta(D_\Theta)},
\end{equation}
where $K>0$ is a universal constant.
Applying next the last formula from \cite[Section 5]{Saitoh} it follows that there exists a constant $\tilde K$, depending only on $\theta$,
such that
\begin{equation*}
    \int_0^\pi x (\pi-x)\left( |g_0(x)|^2+ |g_\pi(x)|^2\right)\, {\rm d}x \leqslant \tilde K\left( \|g_0\|_{A^2(\Delta_\Theta)}^2+\|g_\pi\|_{A^2(\pi-\Delta_\Theta)}^2\right).
\end{equation*}
By combining the above estimate, \eqref{desco_2} and \eqref{eq:estg0g1f} we deduce  \eqref{int_pe_raze}. 
\end{proof}

As a consequence of the above result, we obtain:

\begin{corollary}\label{cor_lem_intors}
  Let  $\Theta\in \left(0,\frac{\pi}{2}\right)$, $\delta\in (0,1)$ and let \(A^2_\delta\left(D_\Theta\right)\)
  be the space introduced in \eqref{fara_rho}. Then  $A^2_\delta\left(D_\Theta\right)$ is contained, with continuous inclusion, in the negative order Sobolev space $ W^{-1,2}(0,\pi)$ . 
\end{corollary}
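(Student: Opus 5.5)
Since the trace map $f\mapsto f|_{(0,\pi)}$ is injective on $A^2_\delta(D_\Theta)$, the plan is to work with traces and prove a continuous embedding into $W^{-1,2}(0,\pi)$. The approach is by duality. We use the norm
\[
\|g\|_{W^{-1,2}(0,\pi)}=\sup\Bigl\{\Bigl|\int_0^\pi g\,\overline{\varphi}\,{\rm d}x\Bigr| \ :\ \varphi\in W^{1,2}_0(0,\pi),\ \|\varphi'\|_{L^2}\leqslant 1\Bigr\},
\]
which is the dual of $W^{1,2}_0(0,\pi)$ with pivot $L^2$. So it suffices to show that for $f\in A^2_\delta(D_\Theta)$ the pairing $\varphi\mapsto\int_0^\pi f\overline{\varphi}\,{\rm d}x$ is bounded on $W^{1,2}_0$, with a bound controlled by $\|f\|_{A^2_\delta(D_\Theta)}$.

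\textbf{Step 1.} For $\varphi\in W^{1,2}_0(0,\pi)$ we use the Hardy-type pointwise bound
\[
|\varphi(x)|\leqslant \min\bigl(x^{1/2},(\pi-x)^{1/2}\bigr)\|\varphi'\|_{L^2},
\]
which follows from Cauchy--Schwarz applied to $\varphi(x)=\int_0^x\varphi'$ and to $\varphi(x)=-\int_x^\pi\varphi'$. Hence $|\varphi(x)|\leqslant c\,x^{1/2}(\pi-x)^{1/2}\|\varphi'\|_{L^2}$, with $c$ an absolute constant.

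\textbf{Step 2.} Write $w(x)=x(\pi-x)$ and insert the weight $w^{(1+\delta)/2}$ before applying Cauchy--Schwarz:
\[
\Bigl|\int_0^\pi f\overline{\varphi}\Bigr|\leqslant \Bigl(\int_0^\pi w^{1+\delta}|f|^2\Bigr)^{1/2}\Bigl(\int_0^\pi w^{-1-\delta}|\varphi|^2\Bigr)^{1/2}.
\]
By \Cref{lem_intoarsa}, the first factor is at most $C_{\Theta,\delta}^{1/2}\|f\|_{A^2_\delta(D_\Theta)}$. By Step 1, the second factor is at most
\[
c\,\|\varphi'\|_{L^2}\Bigl(\int_0^\pi w^{-\delta}\,{\rm d}x\Bigr)^{1/2},
\]
and this integral is finite precisely because $\delta<1$. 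Combining the two factors gives the bound $C\|f\|_{A^2_\delta(D_\Theta)}\|\varphi'\|_{L^2}$.

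\textbf{Step 3.} The estimate also shows that $f\in L^1_{\rm loc}$ and that $f$ defines a distribution whose pairing with $C_c^\infty(0,\pi)$ extends continuously to $W^{1,2}_0$. Therefore $f\in W^{-1,2}(0,\pi)$ with $\|f\|_{W^{-1,2}}\leqslant C\|f\|_{A^2_\delta(D_\Theta)}$, which is the claimed continuous inclusion. For a genuine embedding we also need injectivity, which holds because a holomorphic function vanishing on $(0,\pi)$ vanishes identically.

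The main point to check is the exponent balance in Step 2: the weight $w^{1+\delta}$ coming from \Cref{lem_intoarsa} must be compensated by the vanishing of $\varphi$ at the endpoints. The pointwise Hardy bound gives only $w^{1}$, which leaves $w^{-\delta}$, and this is integrable exactly for $\delta<1$. That is consistent with the range of $\delta$ in the statement. If the pointwise bound turned out too crude, the fallback would be the sharper $L^2$ Hardy inequality $\int_0^\pi w^{-2}|\varphi|^2\leqslant C\|\varphi'\|^2_{L^2}$, but the pointwise bound already suffices.
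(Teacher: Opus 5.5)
Your proposal is correct and follows essentially the same argument as the paper. Both combine the pointwise bound $|\varphi(x)|\leqslant C\min\{\sqrt x,\sqrt{\pi-x}\}\,\|\varphi\|_{W^{1,2}_0}$ with Cauchy--Schwarz against the weight $x^{1+\delta}(\pi-x)^{1+\delta}$ from \Cref{lem_intoarsa}, which leaves the factor $x^{-\delta}(\pi-x)^{-\delta}$, integrable exactly because $\delta<1$. The differences are cosmetic: you use the symmetric weight $x(\pi-x)$ instead of splitting at $\pi/2$, and you state explicitly the linear dependence on $\|f\|_{A^2_\delta(D_\Theta)}$ and the injectivity, which the paper leaves implicit.
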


\begin{proof}
We first note there exists an absolute constant $C>0$ such that for every \(\xi\) in the Sobolev space \(W_0^{1,2}(0,\pi)\) we have
    \begin{equation}\label{nu_imediat}
        |\xi(x)|\leqslant  C \min\{\sqrt x,\sqrt{\pi-x}\} \|\xi\|_{W_0^{1,2}(0,\pi)}
        \qquad\qquad(x\in (0,\pi)).
    \end{equation}
Using next the Cauchy-Schwarz inequality and \eqref{int_pe_raze}  it follows that
for every $f\in A_\delta^2(D_\Theta)$ (again, we identify $f$ with its restriction to $(0,\pi)$) we have
 \begin{multline*}
     \left|\int_0^\pi f(x)\xi(x)\, {\rm d}x\right| \leqslant C \|\xi\|_{W_0^{1,2}(0,\pi)} \left(\int_0^\frac{\pi}{2} |f(x)| x^\frac12 \, {\rm d}x+
     \int_\frac{\pi}{2}^\pi |f(x)| (\pi-x)^\frac12 \, {\rm d}x\right)\\
     =C \|\xi\|_{W_0^{1,2}(0,\pi)}\left(\int_0^\frac{\pi}{2} x^\frac{1+\delta}{2}|f(x)| x^{-\frac\delta 2} \, {\rm d}x+
     \int_\frac{\pi}{2}^\pi (\pi-x)^\frac{1+\delta}{2} |f(x)| (\pi-x)^{-\frac\delta 2} \, {\rm d}x\right)\\
     \leqslant K \|\xi\|_{W_0^{1,2}(0,\pi)},
 \end{multline*}
 where $C$ is a universal constant and $K$ is a constant depending on $\|f\|_{A_\delta^2(D_\Theta)}$. 
\end{proof}

We turn now to the input maps of the system described by the heat equation with Dirichlet boundary controls. More precisely,
consider the maps $(\Phi_\tau^D)_{\tau \geqslant 0}$  defined by
\begin{equation}\label{DEFFITAUSIMPLU}
\Phi_\tau^D \begin{bmatrix} u_0\\ u_\pi\end{bmatrix}=z(\tau,\cdot)
\qquad\qquad(\tau \geqslant 0,\ u_0,\ u_\pi\in L^2[0,\tau]),
\end{equation}
where $z$ is the solution of \eqref{PROBHEAT1}. Alternatively, the input maps are defined by \eqref{Phi_t},
with $X,\ U,\ A$ and $B$ chosen as in \Cref{ex_heat_det}. Within this context we recall from \Cref{ex_heat_det} that $B$ is an admissible control operator for the semigroup generated by $A$.
Moreover, putting together results from Hartmann, Kellay and Tucsnak \cite{HKT_2020} and from  \cite{hartmann2021separation}
we have:

\

\

\

\begin{theorem}\label{rephrased}
Let $(\Phi_\tau^D)_{\tau \geqslant 0}$ be the input maps defined in \eqref{DEFFITAUSIMPLU}. Then for every $\tau>0$ the range of $\Phi_\tau^D$, denoted ${\rm Ran}\, \Phi_\tau^D$, coincides with $A^2\left(D_{\frac{\pi}{4}}\right)$.   
Moreover, 
\begin{multline}\label{DEFOP1}
\left(\Phi_\tau^D \begin{bmatrix} u_0\\ u_\pi\end{bmatrix}\right)(s) =\int_0^\tau  \frac{\partial K_0}{\partial s} (\tau-\sigma,s) u_0 (\sigma) \, {\rm d}\sigma\\
 + \int_0^\tau \frac{\partial K_\pi}{\partial s}(\tau-\sigma,s) u_\pi (\sigma) \, {\rm d}\sigma \qquad(\tau>0,\ u_0,\ u_\pi \in L^2[0,\tau],\ s\in D_\frac{\pi}{4}),
\end{multline}
 and
\begin{equation}\label{MAGIC0}
K_0(\sigma,s)=-\sqrt{\frac{1}{\pi\sigma}} \sum_{m\in \mathbb{Z}} \ {\rm e}^{-\frac{(s+2m \pi)^2}{4\sigma}}\qquad\qquad(\sigma>0,\ s\in D_\frac{\pi}{4}),
\end{equation}
\begin{equation}\label{MAGICPI}
K_\pi(\sigma,s)=K_0 (\sigma,\pi-s) \qquad\qquad(\sigma>0, s\in D_\frac{\pi}{4}),
\end{equation}
where the  series in \eqref{MAGIC0} converges in $A^2\left(D_{\frac{\pi}{4}}\right)$,
\end{theorem}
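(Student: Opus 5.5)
The statement \Cref{rephrased} has two parts, and I would treat them separately. The equality ${\rm Ran}\,\Phi_\tau^D=A^2(D_{\frac{\pi}{4}})$ I would simply assemble from the literature. \cite{HKT_2020} gives the inclusions $A^{1,2}$-type spaces on slightly larger domains $\subset{\rm Ran}\,\Phi_\tau^D\subset A^2(D_{\frac{\pi}{4}})$. The separation-of-singularities results of \cite{hartmann2021separation} (Theorems 1.4 and 1.5 there, already used in \Cref{lem_intoarsa}) then close the gap. They do this by decomposing $f\in A^2(D_{\frac{\pi}{4}})$ as $f=g_0+g_\pi$ with $g_0\in A^2(\Delta_{\frac{\pi}{4}})$ and $g_\pi\in A^2(\pi-\Delta_{\frac{\pi}{4}})$. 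Each piece is reachable by the half-line heat equation with boundary control, by the Aikawa--Hayashi--Saitoh characterization \cite{Saitoh}. The piece is then transferred to the interval by null controllability (\Cref{FattSeid} and the Fattorini--Russell result quoted in \Cref{ex_heat_det}). \Cref{FattSeid} also gives independence of $\tau$. I would only indicate this chain, since the paper presents the theorem as a combination of known results.

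For the kernel representation \eqref{DEFOP1}, I would first take smooth controls $u_0,u_\pi\in C_c^\infty((0,\tau))$. For these the solution of \eqref{PROBHEAT1} is given classically by the method of images. The Dirichlet heat kernel on $(0,\pi)$ is a periodized Gaussian, and the boundary contribution at $x=0$ is
\[
-2\,\partial_y G(\tau-\sigma,x,y)\big|_{y=0}=\partial_x\Big(\sum_{m\in\mathbb Z}(\pi(\tau-\sigma))^{-1/2}\cdots\Big),
\]
which up to sign conventions is $\partial_s K_0(\tau-\sigma,s)$ with $K_0$ as in \eqref{MAGIC0}. I would check the signs by verifying that $\partial_sK_0$ solves the heat equation, vanishes at $s=\pi$ by the $2\pi$-periodicity and oddness structure, and tends to $\delta$-type data at $s=0$ as $\sigma\to0$. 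The term at $\pi$ then follows by the symmetry $s\mapsto\pi-s$, which gives \eqref{MAGICPI}.

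Next comes holomorphic extension. For $s\in D_{\frac{\pi}{4}}$ one has $|\arg s|<\pi/4$, so ${\rm Re}\,(s+2m\pi)^2>0$ for $m=0$. For $m\neq0$, ${\rm Re}\,(s+2m\pi)^2\ge c\,m^2$, so every term except the $m=0$ and (for $K_\pi$) the reflected one is uniformly Gaussian-small. The series for $\partial_sK_0$ therefore converges locally uniformly on $D_{\frac{\pi}{4}}$ and defines a holomorphic function. Convergence in $A^2(D_{\frac{\pi}{4}})$ follows from summing the tails, which are bounded by $e^{-cm^2/\sigma}$, on the bounded domain.

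The main obstacle is passing from smooth controls to all of $L^2$. Here I would use the boundedness of $\Phi_\tau^D:L^2([0,\tau];\mathbb C^2)\to A^2(D_{\frac{\pi}{4}})$ from the first part, via the closed graph theorem since the range lies in a Hilbert space continuously embedded in $W^{-1,2}$ by \Cref{cor_lem_intors}. I would also use that, for fixed $s$ in the open rhombus, point evaluation is continuous on $A^2$, and that $\sigma\mapsto\partial_sK_0(\tau-\sigma,s)$ lies in $L^2(0,\tau)$ because ${\rm Re}\,s^2>0$ gives Gaussian decay as $\sigma\to\tau$. Both sides of \eqref{DEFOP1} are then continuous in $u$ pointwise in $s$, and density of smooth controls finishes the argument.
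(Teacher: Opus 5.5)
Your proposal is correct in substance. For the identity ${\rm Ran}\,\Phi_\tau^D=A^2(D_{\frac{\pi}{4}})$ it does what the paper does. The paper gives no argument for \Cref{rephrased}: it quotes \cite{HKT_2020} and \cite{hartmann2021separation} for both the range and the kernel representation. Your sketch of the literature chain, which is only a rough paraphrase of those papers, plays the same role.

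The genuine difference is that you derive \eqref{DEFOP1} yourself. Your steps are:
\begin{itemize}
\item Green's formula and the method of images for $u\in C_c^\infty((0,\tau);\mathbb{C}^2)$;
\item holomorphic continuation, using ${\rm Re}\,s^2>0$, ${\rm Re}\,(\pi-s)^2>0$ and ${\rm Re}\,(s+2m\pi)^2\geqslant c\,m^2$ for $m\neq 0$ on the open square;
\item a density argument, based on $\sigma\mapsto\frac{\partial K_0}{\partial s}(\tau-\sigma,s)\in L^2(0,\tau)$ and continuity of point evaluations.
\end{itemize}
This is sound. It gives a self-contained proof that the formula holds pointwise on $D_{\frac{\pi}{4}}$ for every $L^2$ input, and it also justifies the convergence claim for \eqref{MAGIC0}.

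You can drop the first part and the closed graph theorem from this step. As written, that step would also need $A^2(D_{\frac{\pi}{4}})\hookrightarrow A^2_\delta(D_\Theta)$ before \Cref{cor_lem_intors} applies. Instead, note that the $L^2$ bound on the kernel is locally uniform in $s$. Hence the right-hand side of \eqref{DEFOP1} for $u_n$ converges locally uniformly on $D_{\frac{\pi}{4}}$. Meanwhile $\Phi^D_\tau u_n\to\Phi^D_\tau u$ in $W^{-1,2}(0,\pi)$, which identifies the limit on $(0,\pi)$ and hence on $D_{\frac{\pi}{4}}$.

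Do not leave the signs at ``up to sign conventions'', because carrying out your check changes the statement.
\begin{itemize}
\item For the Dirichlet kernel $G$ the boundary term at $0$ is $\partial_yG(\tau-\sigma,x,0)$, with no factor $-2$. It equals $-2\partial_x$ of the periodized free Gaussian $(4\pi t)^{-1/2}\sum_m{\rm e}^{-(x+2m\pi)^2/(4t)}$, which is exactly $\frac{\partial K_0}{\partial x}$. So the $u_0$-term is right.
\item The reflection $x\mapsto\pi-x$ gives, for the $u_\pi$-term, the kernel $\bigl(\frac{\partial K_0}{\partial s}\bigr)(\tau-\sigma,\pi-s)=-\frac{\partial K_\pi}{\partial s}(\tau-\sigma,s)$, with $K_\pi$ as in \eqref{MAGICPI}.
\end{itemize}
Test with $u_\pi\equiv 1$ and small $\tau$. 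The dominant term of $\frac{\partial K_\pi}{\partial s}(\sigma,s)$ is $-\frac{\pi-s}{2\sqrt{\pi}\,\sigma^{3/2}}{\rm e}^{-(\pi-s)^2/(4\sigma)}<0$ on $(0,\pi)$. The printed formula would therefore produce a state close to $-1$ near $s=\pi$, although the boundary datum is $+1$.

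So your symmetry step proves \eqref{DEFOP1} with a minus sign in front of the second integral, or equivalently with $K_\pi(\sigma,s)=-K_0(\sigma,\pi-s)$, not the formula as printed. This is harmless for the rest of the paper, where only the moduli of these kernels enter the Hilbert--Schmidt estimates. You should still state the corrected formula explicitly rather than claim the symmetry ``gives \eqref{MAGICPI}''.
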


We are now in a position to state the main result in this section:

\begin{proposition}\label{expint}
Let $\tau>0$, $\delta\in (0,1)$ and $0<\Theta<\frac{\pi}{4}$. Then for every $\gamma \in \left[0,\frac{\delta}{4}\right)$ the input map $\Phi_\tau^D$ defined in \eqref{DEFFITAUSIMPLU} satisfies
\begin{equation*}
\sum_{n\in \mathbb{Z}} \left\|\Phi_\tau^D\left(\begin{bmatrix}{\rm e}_{n,\tau,\gamma}\\ 
0\end{bmatrix}\right)\right\|^2_{A^2_\delta(D_\Theta)}
+  \sum_{n\in \mathbb{Z}} \left\|\Phi_\tau^D\left(\begin{bmatrix} 0 \\ 
{\rm e}_{n,\tau,\gamma}\end{bmatrix}\right)\right\|^2_{A^2_\delta(D_\Theta)} < \infty ,
\end{equation*}
where 
\begin{equation}\label{dexp_timp}
{\rm e}_{n,\tau,\gamma}(t)= (\tau-t)^{-\gamma}{\rm e}^{\frac{ in\tau t}{2\pi}}
\qquad\qquad(n\in \mathbb{Z},\ t\in [0,\tau]).
\end{equation} 
Consequently, the operators $(L_\tau^D)_{\tau>0}$ defined by
\begin{equation}\label{def_tilde_phi_dir}
L_\tau^D u = \Phi_\tau^D (u_\gamma) \qquad\qquad(\tau>0),    
\end{equation}
where 
\[
u_\gamma(\sigma)=(\tau-\sigma)^{-\gamma} u(\sigma) \qquad\qquad(\sigma\in (0,\tau))
\]
are Hilbert-Schmidt  from $L^2\left([0,\tau];\mathbb{C}^2\right)$ to $A^2_\delta(D_\Theta)$.
\end{proposition}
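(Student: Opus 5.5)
The plan is to compute the Hilbert--Schmidt norm of $L_\tau^D$ explicitly through the kernel representation \eqref{DEFOP1}, so that summability reduces to a concrete weighted integral over $D_\Theta$. The Hilbert--Schmidt norm does not depend on the orthonormal basis of $L^2[0,\tau]$. The family ${\rm e}_{n,\tau,\gamma}$ in \eqref{dexp_timp} is, up to normalisation, $(\tau-t)^{-\gamma}$ times an exponential basis, so the two sums in the statement are comparable to $\|L_\tau^D\|^2_{HS}$; the final assertion then follows from the first.

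\textbf{Step 1: reduction to a weighted integral of the kernel.} Fix $s\in D_\Theta$. By \eqref{DEFOP1}, $u\mapsto (L^D_\tau u)(s)$ is the $L^2$ pairing of $u$ with
\[
k_s(\sigma)=(\tau-\sigma)^{-\gamma}\Bigl(\tfrac{\partial K_0}{\partial s}(\tau-\sigma,s),\ \tfrac{\partial K_\pi}{\partial s}(\tau-\sigma,s)\Bigr).
\]
By Parseval, $\sum_n |(L^D_\tau e_n)(s)|^2=\|k_s\|^2_{L^2}$ for any orthonormal basis $(e_n)$. Integrating against $\rho_\delta\,{\rm d}A$ and using Tonelli gives
\[
\|L_\tau^D\|_{HS}^2=\int_{D_\Theta}\rho_\delta(s)\int_0^\tau \sigma^{-2\gamma}\Bigl(\bigl|\partial_sK_0(\sigma,s)\bigr|^2+\bigl|\partial_sK_\pi(\sigma,s)\bigr|^2\Bigr){\rm d}\sigma\,{\rm d}A(s).
\]
By \eqref{MAGICPI} and the symmetry $s\mapsto\pi-s$ of $D_\Theta$ and of $\rho_\delta$, it suffices to treat the $K_0$ term.

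\textbf{Step 2: pointwise bounds on $\partial_sK_0$.} Differentiate \eqref{MAGIC0} termwise:
\[
\partial_sK_0(\sigma,s)=\frac{1}{2\sqrt\pi}\,\sigma^{-3/2}\sum_{m\in\mathbb Z}(s+2m\pi)\,{\rm e}^{-\frac{(s+2m\pi)^2}{4\sigma}}.
\]

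For the term $m=0$, write $s=r{\rm e}^{i\theta}$ with $|\theta|<\Theta$. Then ${\rm Re}(s^2)=r^2\cos 2\theta\geqslant r^2\cos 2\Theta$, and this lower bound is positive precisely because $\Theta<\frac{\pi}{4}$. Hence
\[
\int_0^\tau\sigma^{-3-2\gamma}r^2{\rm e}^{-\frac{r^2\cos 2\Theta}{2\sigma}}{\rm d}\sigma\leqslant C\,r^2(r^2)^{-2-2\gamma}=C\,r^{-2-4\gamma}.
\]
Since $\rho_\delta(s)\sim r^\delta$ near $0$ and is bounded on $D_\Theta$, the contribution is controlled by $\int_0^{C} r^{\delta-2-4\gamma}\,r\,{\rm d}r$. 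This is finite exactly when $\gamma<\frac{\delta}{4}$.

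Away from $s=0$ the $m=0$ term causes no difficulty: for $|s|\geqslant\varepsilon$ it is bounded by $C\sigma^{-3/2}{\rm e}^{-c/\sigma}$, which is integrable in $\sigma$ near $0$.

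For $m\neq0$ (the images), I will show that there is $c>0$ with
\[
{\rm Re}\bigl((s+2m\pi)^2\bigr)\geqslant c\,(1+m^2)\qquad (s\in D_\Theta,\ m\neq0).
\]
This holds because $s+2m\pi$ ranges over a translate of $D_\Theta$ lying in the sector $\{|\arg(\pm w)|<\Theta\}$ at distance at least $\pi$ from the origin; the worst case is $m=-1$, near the vertex $s=\pi$. Then
\[
\Bigl|\sum_{m\neq0}(s+2m\pi)\,{\rm e}^{-\frac{(s+2m\pi)^2}{4\sigma}}\Bigr|\leqslant C{\rm e}^{-c/\sigma},
\]
uniformly in $s$. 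Combined with $\sigma^{-3/2}$, this gives a bounded integrand.

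\textbf{Step 3: conclusion.} Combining Steps 1 and 2 gives $\|L^D_\tau\|_{HS}<\infty$, which is the claimed summability. For $\gamma=0$ it is in particular the Hilbert--Schmidt property of $\Phi_\tau^D$ into $A^2_\delta(D_\Theta)$.

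\textbf{Main obstacle.} I expect the main difficulty to be the rigorous part of Step 1 together with the image estimate in Step 2. Step 1 needs the pointwise evaluation of $\Phi^D_\tau u$ at complex $s\in D_\Theta$ to be given by the kernel integral (from \Cref{rephrased}, valid on $D_{\pi/4}\supset D_\Theta$), and termwise differentiation of the series to be justified; the uniform Gaussian decay of the image terms should give both. The image estimate requires a uniform lower bound on ${\rm Re}\bigl((s+2m\pi)^2\bigr)$ near the vertex $\pi$ for $m=-1$, which again uses $\Theta<\frac{\pi}{4}$.
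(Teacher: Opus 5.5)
Your proposal is correct and follows essentially the paper's own route: Parseval in the time variable for each fixed $s$, the bound $|s|^2(\Re\, s^2)^{-2-2\gamma}$ for the $m=0$ term, which gives integrability of $r^{\delta-1-4\gamma}$ exactly when $\gamma<\frac{\delta}{4}$, Gaussian decay of the image terms, and the symmetry $s\mapsto \pi-s$ for the second endpoint. The differences are minor refinements: you work with an arbitrary orthonormal basis (for $\tau\neq 2\pi$ the literal exponentials ${\rm e}^{\frac{in\tau t}{2\pi}}$ form only a Bessel family, which still yields the needed upper bound), and you sum the images pointwise before squaring, which avoids the step from $\sum_{m,n}\|\Gamma^{(0)}_{mn}\|^2<\infty$ to square-summability of $\sum_m\Gamma^{(0)}_{mn}$ that the paper leaves implicit.
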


Our proof of the above result requires some notation and two lemmas.
Firstly, for $\Theta\in \left(0,\frac{\pi}{4}\right)$, $\delta\in (0,1)$, $\gamma>0$, $s\in D_\Theta$
and  $m,\ n\in \mathbb{Z}$, we denote
\begin{equation}\label{untilde}
\Gamma_{mn}^{(0)}(s)=\frac{1}{2\sqrt\pi}\int_0^{2\pi} 
\frac{{\rm e}^{-\frac{(s+2m\pi)^2}{4(2\pi-\sigma)}}}{(2\pi-\sigma)^{\frac{3}{2}+\gamma}} (s+2m\pi)\, {\rm e}^{in\sigma}\, {\rm d}\sigma,
\end{equation}
\begin{equation}\label{deuxtilde}
\Gamma_{mn}^{(\pi)}(s)=\frac{1}{2\sqrt\pi}\int_0^{2\pi} 
\frac{{\rm e}^{-\frac{(s+(2m-1)\pi)^2}{4(2\pi-\sigma)}}}{(2\pi-\sigma)^{\frac{3}{2}+\gamma}} \left[(2m+1)\pi-s\right] {\rm e}^{in\sigma}\, {\rm d}\sigma.
\end{equation}

The first of the above mentioned  lemmas states as follows:

\begin{lemma} \label{th_jap_bis}  Let $\Theta\in \left(0,\frac{\pi}{4}\right)$, $\delta\in (0,1)$ and $\gamma\in \left[0,\frac{\delta}{4}\right)$. 
Then, using the notation introduced in \eqref{untilde}, \eqref{deuxtilde}, we have 
\[
\sum_{n\in \mathbb{Z}} \left(\|\Gamma_{0n}^{(0)}\|_{A^2_\delta(D_\Theta)}^2+\|\Gamma_{0n}^{(\pi)}\|_{A^2_\delta(D_\Theta)}^2\right) < \infty.
\]
\end{lemma}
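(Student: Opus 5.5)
The plan is to apply Parseval in the frequency variable $n$ for each fixed $s$, and then Tonelli to exchange the sum over $n$ with the area integral over $D_\Theta$. Fix $s\in D_\Theta\setminus\{0\}$ and put
\[
g_s(\sigma)=\frac{1}{2\sqrt\pi}\,\frac{{\rm e}^{-\frac{s^2}{4(2\pi-\sigma)}}}{(2\pi-\sigma)^{\frac32+\gamma}}\, s \qquad\qquad(\sigma\in(0,2\pi)).
\]
By \eqref{untilde}, $\Gamma^{(0)}_{0n}(s)=\int_0^{2\pi} g_s(\sigma){\rm e}^{in\sigma}\,{\rm d}\sigma$, so these numbers are, up to the factor $2\pi$, the Fourier coefficients of $g_s$ with respect to the orthonormal basis $\bigl((2\pi)^{-1/2}{\rm e}^{-in\sigma}\bigr)_{n\in\mathbb Z}$ of $L^2[0,2\pi]$.

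The estimate rests on one inequality. Writing $s=t{\rm e}^{i\theta}$ with $|\theta|<\Theta<\frac{\pi}{4}$, we have
\[
{\rm Re}(s^2)=t^2\cos 2\theta\geqslant c_\Theta |s|^2, \qquad c_\Theta:=\cos 2\Theta>0 .
\]
Consequently $|g_s(\sigma)|\leqslant C|s|\,r^{-\frac32-\gamma}{\rm e}^{-c_\Theta|s|^2/(4r)}$ with $r=2\pi-\sigma$. In particular $g_s\in L^2[0,2\pi]$, since the Gaussian factor kills the singularity at $r=0$. Parseval then gives
\[
\sum_{n\in\mathbb Z}|\Gamma^{(0)}_{0n}(s)|^2=2\pi\|g_s\|^2_{L^2[0,2\pi]}
\leqslant C|s|^2\int_0^{2\pi} r^{-3-2\gamma}{\rm e}^{-\frac{c_\Theta|s|^2}{2r}}\,{\rm d}r .
\]
In the last integral I substitute $r=c_\Theta|s|^2/(2v)$ and extend the $v$-integration to $(0,\infty)$. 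This gives the bound $C_\Theta\,\Gamma(2+2\gamma)\,|s|^{-4-4\gamma}$, hence
\[
\sum_n|\Gamma^{(0)}_{0n}(s)|^2\leqslant C_{\Theta,\gamma}|s|^{-2-4\gamma}\qquad (s\in D_\Theta).
\]
Measurability in $(s,n)$ is clear, so Tonelli yields
\[
\sum_n\|\Gamma^{(0)}_{0n}\|^2_{A^2_\delta(D_\Theta)}\leqslant C\int_{D_\Theta}|s|^{\delta-2-4\gamma}|\pi-s|^\delta\,{\rm d}A(s).
\]
We have $|\pi-s|^\delta\leqslant C$ on $D_\Theta$. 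In polar coordinates around $0$ the area element is $t\,{\rm d}t\,{\rm d}\theta$, so the integral is bounded by $C\int_0^{\pi}t^{\delta-1-4\gamma}\,{\rm d}t$. This is finite precisely because $\gamma<\frac{\delta}{4}$, which is where this hypothesis enters. For the same reason the weight $\rho_\delta$ is needed when $\gamma=0$: with $\delta=0$ the integral $\int_0 t^{-1}\,{\rm d}t$ diverges, consistent with the sharpness claim.

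For $\Gamma^{(\pi)}_{0n}$ with $m=0$, the kernel in \eqref{deuxtilde} becomes ${\rm e}^{-(s-\pi)^2/(4r)}$ times $(\pi-s)$. After the change of variables $s'=\pi-s$ it is the same expression as above, and $D_\Theta$ is invariant under $s\mapsto\pi-s$ while $\rho_\delta$ is symmetric. So the identical argument, now with polar coordinates centred at $\pi$, gives the bound $C\int_0^\pi t^{\delta-1-4\gamma}\,{\rm d}t<\infty$. Adding the two contributions proves the lemma.

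The only delicate point, and where the argument breaks at the critical angle, is the uniform coercivity ${\rm Re}(s^2)\geqslant\cos(2\Theta)|s|^2$, and its analogue for $(\pi-s)^2$ near the vertex $\pi$. This inequality fails when $\Theta=\frac{\pi}{4}$: along the edges of $D_{\pi/4}$ the Gaussian factor loses its decay and the bound $|s|^{-2-4\gamma}$ no longer holds. The remaining steps, namely the validity of Parseval for each $s$ and the exchange of sum and integral, are routine because all terms are nonnegative.
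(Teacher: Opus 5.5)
Your proposal is correct and is essentially the paper's proof: Parseval in $n$ for fixed $s$, then the substitution in the $\sigma$-integral giving the pointwise bound $|s|^2(\Re(s^2))^{-2-2\gamma}\lesssim |s|^{-2-4\gamma}$, then polar coordinates at the vertex, where $\gamma<\frac{\delta}{4}$ makes $t^{\delta-1-4\gamma}$ integrable. The only difference is that the paper keeps the factor $\cos^{-2-2\gamma}(2\theta)$ and integrates it over $|\theta|<\Theta$ instead of using your uniform bound $\Re(s^2)\geqslant\cos(2\Theta)|s|^2$; the $\Gamma^{(\pi)}_{0n}$ term is handled in both by the symmetry $s\mapsto\pi-s$.
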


\begin{proof}
     We clearly have that for every $s\in D_\frac{\pi}{4}$ the function
    \[
    \sigma\mapsto \frac{{\rm e}^{-\frac{s^2}{4(2\pi-\sigma)}}}{(2\pi-\sigma)^{\frac{3}{2}+\gamma}} s
    \]
    lies in $L^2[0,2\pi]$. Applying Parseval's theorem it follows that for every $s\in D_\frac{\pi}{4}$ we have
    \begin{equation}\label{numar_sa_fie}
        \sum_{n\in \mathbb{Z}} |\Gamma_{0n}^{(0)}(s)|^2=\int_0^{2\pi } \left|\frac{{\rm e}^{-\frac{s^2}{4(2\pi-\sigma)}}}{(2\pi-\sigma)^{\frac{3}{2}+\gamma}} s\right|^2 \, {\rm d}\sigma = \int_0^{2\pi } \frac{{\rm e}^{-\frac{\Re\,(s^2)}{2(2\pi-\sigma)}}}{(2\pi-\sigma)^{3+2\gamma}} |s|^2 \, {\rm d}\sigma.
    \end{equation}
Using the change of variables $2\pi-\sigma =\xi \Re\, (s^2)$ we obtain that
\begin{multline}\label{noua_linie}
\int_0^{2\pi } \frac{{\rm e}^{-\frac{\Re\,(s^2)}{2(2\pi-\sigma)}}}{(2\pi-\sigma)^{3+2\gamma}}
|s|^2 \, {\rm d}\sigma=
\frac{|s|^2}{\left(\Re\, (s^2)\right)^{2+2\gamma}}\int_0^{\frac{2\pi}{{\Re\, (s^2)}}}
\frac{{\rm e}^{-\frac{1}{2\xi}}}{\xi^{3+2\gamma}} \, {\rm d}\xi  \\
\leqslant  \frac{C|s|^2}{\left(\Re\, (s^2)\right)^{2+2\gamma}}.
\end{multline}

From the above estimate and \eqref{numar_sa_fie} we deduce that
\begin{align*}
\sum_{n\in\mathbb{Z}} \|\Gamma_{0n}^{(0)}\|_{A_\delta^2(D_\Theta)}^2&=\sum_{n\in\mathbb{Z}}\int_{D_\Theta}|s|^\delta |\pi-s|^\delta \left| \Gamma_{0n}^{(0)}(s)\right|^2\,{\rm d}A(s)\\ & \leqslant C \int_{D_\Theta}|s|^{2+\delta} |\pi-s|^\delta \frac{1}{\left(\Re\, (s^2)\right)^{2+2\gamma}}\,{\rm d}A(s)\\
 & = C  \int_{-\Theta}^\Theta \int_0^{r_\theta} \left|\pi-r {\rm e}^{i\theta}\right|^\delta   \frac{r^{-1+\delta -4\gamma}}{\cos^{2+2\gamma}(2 \theta)} \,{\rm d}r\,{\rm d}\theta,
\end{align*}
which, for $0\leqslant \gamma<\frac{\delta}{4}$, implies that
\begin{equation}\label{eq:egmN0}
  \sum_{n\in \mathbb{Z}}  \|\Gamma_{0n}^{(0)}\|_{A_\delta^2(D_\Theta)}^2 <\infty.
\end{equation}
The fact that
\[
\sum_{n\in \mathbb{Z}}\|\Gamma_{0n}^{(\pi)}\|_{A^2_\delta(D_\Theta)}^2<\infty,
\]
can be checked similarly, which ends the proof.
\end{proof}

The last preparatory lemma before proving the main result in this section is: 

\begin{lemma}\label{ceilalti}
Let $\Theta\in \left(0,\frac{\pi}{4}\right]$, $\delta\in [0,1)$ and $\gamma\in \left[0,\frac{\delta}{4}\right]$. 
Then, using the notation introduced in \eqref{untilde}, \eqref{deuxtilde}, there exist two constants $K,\ c>0$ such that for every $m\in \mathbb{Z}^*$   we have
\begin{equation}\label{est_cei}
  \sum_{n\in \mathbb{Z}} \left( |\Gamma_{mn}^{(0)}(s)|^2+|\Gamma_{mn}^{(\pi)}(s)|^2\right)\leqslant K {\rm e}^{-\frac{c m^2}{2\pi}} \qquad\qquad\left(s\in D_\Theta\right).
\end{equation}
\end{lemma}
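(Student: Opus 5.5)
The plan is to mimic the proof of \Cref{th_jap_bis}: use Parseval in $\sigma$, and then exploit the fact that for $m\neq 0$ the Gaussian factor decays uniformly in $s\in D_\Theta$.

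\textbf{Step 1 (Parseval).} Fix $s\in D_\Theta$ and $m\in\mathbb{Z}^*$. Since $\Gamma_{mn}^{(0)}(s)$ is, up to the factor $\sqrt{2\pi}/(2\sqrt\pi)$, the $n$-th Fourier coefficient on $[0,2\pi]$ of
\[
h_m(\sigma)=\frac{{\rm e}^{-\frac{(s+2m\pi)^2}{4(2\pi-\sigma)}}}{(2\pi-\sigma)^{\frac32+\gamma}}\,(s+2m\pi),
\]
Parseval's theorem gives
\[
\sum_{n\in\mathbb{Z}}|\Gamma_{mn}^{(0)}(s)|^2=C\int_0^{2\pi}\frac{{\rm e}^{-\frac{\Re((s+2m\pi)^2)}{2(2\pi-\sigma)}}}{(2\pi-\sigma)^{3+2\gamma}}\,|s+2m\pi|^2\,{\rm d}\sigma .
\]
The same identity holds for $\Gamma_{mn}^{(\pi)}$, with $s+2m\pi$ replaced by the relevant shift $s+(2m-1)\pi$, or equivalently $(2m+1)\pi-s$.

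\textbf{Step 2 (uniform lower bound on the real part).} This is the key geometric point and the main potential obstacle. Write $w=s+2m\pi=a+ib$, with $a=\Re s+2m\pi$ and $|b|=|\Im s|$. Because $s\in D_\Theta\subset D_{\pi/4}$, we have $0\le\Re s\le\pi$ and $|\Im s|\le\min(\Re s,\pi-\Re s)\le\pi/2$. For $m\ge1$ this gives $a\ge 2\pi$; for $m\le -1$ it gives $a\le -\pi$, so $|a|\ge \pi$. In both cases $|a|\ge |m|\pi$ (for $m\le-1$, $|a|\ge 2|m|\pi-\pi\ge |m|\pi$). Hence
\[
\Re(w^2)=a^2-b^2\ge a^2-\tfrac{\pi^2}{4}\ge \tfrac34 a^2\ge c_0 m^2 ,
\]
and also $|w|^2\le C m^2$ uniformly in $s$. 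The shifted point for $\Gamma^{(\pi)}$ has the same structure. The only case needing care is a possible index with zero shift, namely $m$ such that $2m-1=0$ or $2m+1=0$, which never occurs for integer $m$. I expect checking the exact shift conventions in \eqref{deuxtilde} to be the fiddliest part. The estimate is uniform up to $\Theta=\pi/4$, since only $|\Im s|\le\pi/2$ is used, which is why the lemma allows $\Theta=\pi/4$ and $\delta=0$.

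\textbf{Step 3 (bound on the integral).} Set $R=\Re(w^2)\ge c_0m^2$ and $\xi=2\pi-\sigma\in(0,2\pi]$. Then the integrand is at most
\[
|w|^2\,\xi^{-3-2\gamma}{\rm e}^{-R/(2\xi)} \le |w|^2\,\xi^{-3-2\gamma}{\rm e}^{-R/(4\xi)}\,{\rm e}^{-R/(8\pi)}.
\]
Next, $\sup_{\xi\in(0,2\pi]}\xi^{-3-2\gamma}{\rm e}^{-R/(4\xi)}\le C R^{-3-2\gamma}$ when $R$ is large, and it is bounded in every case since $R\ge c_0$. Therefore
\[
\sum_{n}\bigl(|\Gamma_{mn}^{(0)}(s)|^2+|\Gamma_{mn}^{(\pi)}(s)|^2\bigr)\le C m^2\,{\rm e}^{-c_0m^2/(8\pi)}.
\]
Finally, absorb the factor $m^2$ by slightly reducing the constant in the exponent. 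This yields \eqref{est_cei} with $K,c$ independent of $s$ and $m$. Since $\gamma\le \delta/4<1/4$, the constants are uniform in $\gamma$ as well.
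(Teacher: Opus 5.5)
Your proof is correct and is essentially the paper's argument. Both use Parseval in $\sigma$, then the uniform bounds $\Re\bigl((s+k\pi)^2\bigr)\geqslant c\,k^2$ and $|s+k\pi|^2\leqslant C k^2$ for $s\in D_{\frac{\pi}{4}}$ and $k\in\mathbb{Z}\setminus\{-1,0\}$, and finally an elementary bound of the remaining $\sigma$-integral. You actually verify the geometric bound, which the paper only asserts. The paper evaluates the final integral exactly via the substitution $u=bk^2/t$, so no extra factor $m^2$ appears, whereas your sup bound produces one that you absorb. One small correction to your side remark: the shifts needing care are $k=0$ and also $k=-1$, which sends the vertex $\pi$ to $0$; for $\Gamma^{(\pi)}_{mn}$ the case $2m-1=-1$ is excluded exactly because $m\neq 0$.
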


\begin{proof}
Repeating the arguments from the beginning of \Cref{th_jap_bis} it follows that
for every $s\in D_{\frac{\pi}{4}}$ and $m\in \mathbb{Z}$ we have
    \begin{equation}\label{numar_sa_fie_cei}
        \sum_{n\in \mathbb{Z}} |\Gamma_{mn}^{(0)}(s)|^2 = \int_0^{2\pi }
        \frac{{\rm e}^{-\frac{\Re\,((s+2m\pi)^2)}{2(2\pi-\sigma)}}}{(2\pi-\sigma)^{3+2\gamma}} |s+2m\pi|^2 \, {\rm d}\sigma ,
    \end{equation}
    \begin{equation}\label{numar_sa_fie_cei_tilde}
        \sum_{n\in \mathbb{Z}} |\Gamma_{mn}^{(\pi)}(s)|^2 = \int_0^{2\pi } 
        \frac{{\rm e}^{-\frac{\Re\,((s+(2m-1)\pi)^2)}{2(2\pi-\sigma)}}}{(2\pi-\sigma)^{3+2\gamma}} |s+(2m-1)\pi|^2 \, {\rm d}\sigma .
    \end{equation}
We next note that there exists $a,\ b>0$ such that for every $k\in \mathbb{Z}\setminus\{-1,0\}$ we have
 $$\left|(s+k\pi) {\rm e}^{-\frac{(s+k\pi)^2}{4(2\pi-\sigma)}}\right|^2\leqslant a k^2 {\rm e}^{\frac{-b k^2}{(2\pi-\sigma)}} \qquad\qquad \left(s\in D_{\frac{\pi}{4}}
 \right).$$
It follows that for every $s\in D_{\frac{\pi}{4}}$ and for every $k\in \mathbb{Z}\setminus\{-1,0\}$ we have
 \begin{multline*}
 \int_0^{2\pi}  \frac{\Big|(s+k\pi) 
 {\rm e}^{-\frac{(s+k\pi)^2}{4(2\pi-\sigma)}}\Big|^2}{(2\pi-\sigma)^{3+2\gamma}}\, {\rm d}\sigma
 \leqslant a \int_0^{2\pi} \frac{ k^2 {\rm e}^{\frac{-b k^2}{t}}}{ t^{3+2\gamma}}\, {\rm d}t\\
 =\frac{a{\rm e}^{-\frac{bk^2}{2\pi}}}{b(bk^2)^{1+2\gamma}}
 \int_{\frac{b k^2}{2\pi}}^{\infty} u^{1+2\gamma} {\rm e}^{-u+\frac{bk^2}{2\pi}}\, {\rm d}u \\
 =\frac{a{\rm e}^{-\frac{bk^2}{2\pi}}}{b(2\pi b)^{1+2\gamma}}\int_{0}^\infty \left(\frac{2\pi}{k^2} u+b\right)^{1+2\gamma}{\rm e}^{-u}\,{\rm d}u\\
 \leqslant \frac{a{\rm e}^{-\frac{bk^2}{2\pi}}}{b(2\pi b)^{1+2\gamma}}\int_{0}^\infty \left(2\pi u+b\right)^{1+2\gamma}{\rm e}^{-u}\,{\rm d}u
 =  C {\rm e}^{-\frac{b k^2}{2\pi}}.
 \end{multline*}
Combining the above estimate with \eqref{numar_sa_fie_cei}, \eqref{numar_sa_fie_cei_tilde} yields the announced conclusion.
\end{proof}

We are now in a position to prove the main result in this section.

\begin{proof}[Proof of \Cref{expint}]
We first note that from \Cref{rephrased} it follows that
\begin{equation}\label{fi_suma}\left\{\begin{array}{lcl}
  \displaystyle{\Phi_{2\pi}^D\left(\begin{bmatrix}{\rm e}_{n,\tau,\gamma}\\ 
0\end{bmatrix}\right) (s)}&=&\displaystyle{\sum_{m\in \mathbb{Z}} \Gamma_{mn}^{(0)}(s)} \\  \displaystyle{\Phi_{2\pi}^D\left(\begin{bmatrix}0\\{\rm e}_{n,\tau,\gamma}\end{bmatrix}\right) (s)}&=&\displaystyle{\sum_{m\in \mathbb{Z}}\Gamma_{mn}^{(\pi)}(s)} \end{array}\right.
\qquad\qquad(n\in \mathbb{N},\ s\in D_\Theta),
\end{equation}
where $\Gamma_{mn}^{(0)}$ and $\Gamma_{mn}^{(\pi)}$ have been defined in \eqref{untilde} and \eqref{deuxtilde}, respectively.

On the other hand, from \Cref{th_jap_bis} and \Cref{ceilalti} it follows that
\begin{equation*}
  \sum_{m,n\in \mathbb{Z}} \left( \|\Gamma_{mn}^{(0)}\|_{A_\delta^2(D_\Theta)}^2+\|\Gamma_{mn}^{(\pi)}\|_{A_\delta^2(D_\Theta)}^2\right)<\infty.
\end{equation*}
Combining the above estimate and \eqref{fi_suma} yields the announced conclusion for $\tau=2\pi$. Finally, it is easy to check  that this conclusion holds for every $\tau>0$.
\end{proof}

We end this section with the following result: 

\begin{proposition}\label{prop_sharp}
Let $\tau>0$ and assume that either  $\delta=0$ and $\Theta\in \left(0,\frac{\pi}{4}\right)$
or that $\delta\in (0,1)$ and $\Theta=\frac{\pi}{4}$. Then
the input map $\Phi_\tau^D$ defined in \eqref{DEFFITAUSIMPLU} is not
Hilbert-Schmidt  from $L^2\left([0,\tau];\mathbb{C}^2\right)$ into $A^2_\delta(D_\Theta)$.    
\end{proposition}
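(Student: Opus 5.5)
We argue by contradiction and compute the Hilbert--Schmidt norm directly with the orthonormal basis of exponentials, exactly as in the proof of \Cref{expint} but now with $\gamma=0$. Take $\tau=2\pi$; the general case follows by the scaling remark at the end of the proof of \Cref{expint}. The family $\{(2\pi)^{-1/2}[{\rm e}^{in\sigma},0]^T,(2\pi)^{-1/2}[0,{\rm e}^{in\sigma}]^T\}_{n\in\mathbb Z}$ is an orthonormal basis of $L^2([0,2\pi];\mathbb C^2)$. Hence $\Phi_{2\pi}^D$ is Hilbert--Schmidt into $A^2_\delta(D_\Theta)$ if and only if
\[
\sum_{n\in\mathbb Z}\int_{D_\Theta}\rho_\delta(s)\Bigl(\Bigl|\sum_{m}\Gamma^{(0)}_{mn}(s)\Bigr|^2+\Bigl|\sum_m\Gamma^{(\pi)}_{mn}(s)\Bigr|^2\Bigr){\rm d}A(s)<\infty ,
\]
with $\gamma=0$ in \eqref{untilde}, \eqref{deuxtilde}. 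It therefore suffices to show that the first term diverges.

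By Tonelli, we may exchange the sum over $n$ and the integral, and then apply Parseval in $\sigma$ pointwise in $s$, as in \eqref{numar_sa_fie}. We write $\sum_m\Gamma^{(0)}_{mn}=\Gamma^{(0)}_{0n}+R_n$. By \Cref{ceilalti}, which is valid for $\Theta\le\frac\pi4$ and $\delta\ge0$, together with Cauchy--Schwarz over $m$ weighted by ${\rm e}^{-cm^2/(4\pi)}$, we get $\sum_n|R_n(s)|^2\le K'$ uniformly on $D_\Theta$. Since $D_\Theta$ has finite weighted area, the inequality $|a+b|^2\ge\frac12|a|^2-|b|^2$ reduces the problem to proving
\[
\int_{D_\Theta}\rho_\delta(s)\int_0^{2\pi}\frac{{\rm e}^{-\Re(s^2)/(2t)}}{t^{3}}|s|^2\,{\rm d}t\,{\rm d}A(s)=\infty .
\]
The same substitution as in \eqref{noua_linie} gives the lower bound
\[
\frac{|s|^2}{(\Re s^2)^2}\int_0^{2\pi/\Re(s^2)}\xi^{-3}{\rm e}^{-1/(2\xi)}{\rm d}\xi\ \ge\ c\,\frac{|s|^2}{(\Re s^2)^2},
\]
valid when $\Re(s^2)\le C$, which holds on all of $D_\Theta\subset D_{\pi/4}$ because $|s|\le\pi$. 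In polar coordinates $s=r{\rm e}^{i\theta}$ this becomes $r^{-2}\cos^{-2}(2\theta)$.

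We then split into the two cases of the statement.

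\emph{Case $\delta=0$, $\Theta<\pi/4$.} Near the vertex $0$ the integrand is $\gtrsim \int_{-\Theta}^{\Theta}\int_0^{\varepsilon} r\cdot r^{-2}\,{\rm d}r\,{\rm d}\theta=\infty$. This is the logarithmic divergence at the endpoint $\delta=4\gamma=0$ in the exponent $r^{-1+\delta-4\gamma}$ of \Cref{th_jap_bis}.

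\emph{Case $\Theta=\pi/4$, $\delta\in(0,1)$.} Near $0$ the radial integral $\int_0 r^{-1+\delta}\,{\rm d}r$ converges, but the angular factor diverges: $\int_{-\pi/4}^{\pi/4}\cos^{-2}(2\theta)\,{\rm d}\theta=\infty$. To make this rigorous we restrict to the region $r\in(0,\pi/4)$, where $r_\theta\ge\pi/\sqrt2\cdot\frac{\sqrt2}{2}\ge\pi/2$ and $|\pi-s|^\delta$ is bounded below by a positive constant. On this region the integral is $\ge c\int_0^{\pi/4}r^{-1+\delta}\,{\rm d}r\int\cos^{-2}(2\theta)\,{\rm d}\theta=\infty$.

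The main obstacle is making the reduction to the $m=0$ term rigorous. We must check that \Cref{ceilalti} genuinely covers $\gamma=0$ and $\Theta=\pi/4$, as its statement allows. We must also check that the constant in the lower bound of the $\xi$-integral is uniform: since $2\pi/\Re(s^2)\ge 2/\pi$ on the rhombus, the integral over $(0,2/\pi)$ is a fixed positive number. Finally, note that in the case $\Theta=\pi/4$ the divergence comes from the boundary rays $\theta=\pm\pi/4$, where $\Re(s^2)\to0$. There we do not have $\Re(s^2)$ small only because $r$ is small, so we rely on the Parseval identity holding pointwise for each $s$ in the open rhombus, which it does.
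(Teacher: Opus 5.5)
Your proof is correct and follows the paper's own argument. You reduce to $\tau=2\pi$, write $\Phi^D_{2\pi}$ on the exponential basis as the $m=0$ term $\Gamma^{(0)}_{0n}$ plus a tail that \Cref{ceilalti} makes uniformly square-summable in $n$, and then use Parseval and the substitution of \eqref{noua_linie} to expose the non-integrable singularity $|s|^2/(\Re(s^2))^2=r^{-2}\cos^{-2}(2\theta)$ at $s=0$ when $\delta=0$, or along $\theta=\pm\frac{\pi}{4}$ when $\Theta=\frac{\pi}{4}$. The only differences are that you spell out steps the paper calls ``clear'' (the uniform lower bound on the $\xi$-integral, the inequality $|a+b|^2\geqslant\frac12|a|^2-|b|^2$, the explicit polar-coordinate divergence) and use a Gaussian rather than an $m^2$ weight in the Cauchy--Schwarz bound on the tail.
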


\begin{proof}
It clearly suffices to prove the result for $\tau=2\pi$. To achive this goal we first note that
from \eqref{fi_suma} with $\gamma=0$, it follows that for each $n \in \mathbb{Z}$ we have
\begin{equation}\label{fi_suma_part}
  \Phi_{2\pi}^D\left(\begin{bmatrix}{\rm e}_{n,\tau,0}\\ 
0\end{bmatrix}\right) = \Gamma_{0n}^{(0)}+\sum_{m\in \mathbb{Z}^*} \Gamma_{mn}^{(0)},
\end{equation}
where $\left(\Gamma_{mn}^{(0)}\right)_{m,n\in \mathbb{Z}}$  has been defined in \eqref{untilde}.
Moreover, using \eqref{est_cei}
it follows that for every  $s\in D_\frac{\pi}{4}$ we have
\begin{multline*}
\sum_{n\in \mathbb{Z}}\left|\sum_{m\in \mathbb{Z}^*} \Gamma_{mn}^{(0)}(s) \right|^2
\leqslant \frac{\pi^2}{3} \sum_{n\in \mathbb{Z}}\sum_{m\in \mathbb{Z}^*} m^2\left|\Gamma_{mn}^{(0)}(s) \right|^2
=\frac{\pi^2}{3} \sum_{m\in \mathbb{Z}^*} m^2 \sum_{n\in \mathbb{Z}} \left|\Gamma_{mn}^{(0)}(s) \right|^2\\
\leqslant K \sum_{m\in \mathbb{Z}^*} m^2 {\rm e}^{-\frac{c m^2}{2\pi}},
\end{multline*}
so that we have
\begin{equation}\label{aprtine_l2}
\left(\sum_{m\in \mathbb{Z}^*} \Gamma_{mn}^{(0)}\right)_{n\in \mathbb{Z}}\in l^2\left(\mathbb{Z},A^2\left(D_\frac{\pi}{4}\right)\right).
\end{equation}
On the other hand, by combining \eqref{numar_sa_fie} and \eqref{noua_linie} (with $\gamma=0$), it follows that
for every $s\in D_\frac{\pi}{4}$ we have
\[
 \sum_{n\in \mathbb{Z}} |\Gamma_{0n}^{(0)}(s)|^2=\frac{|s|^2}{\left(\Re\, (s^2)\right)^{2}}\int_0^{\frac{2\pi}{{\Re\, (s^2)}}}
\frac{{\rm e}^{-\frac{1}{2\xi}}}{\xi^{3}} \, {\rm d}\xi.
\]
The above formula clearly implies that if $\delta=0$ and $\Theta\in \left(0,\frac{\pi}{4}\right)$
or if $\delta\in (0,1)$ and $\Theta=\frac{\pi}{4}$ then, due to its singularity at $s=0$ (respectively at $\arg s=\frac{\pi}{4}$), the map
\[
s\mapsto s^\delta (\pi-s)^\delta \sum_{n\in \mathbb{Z}} |\Gamma_{0n}^1(s)|^2
\]
is not integrable on $D_\Theta$. Thus
\[
\left(\Gamma_{0n}^{(0)}(s)\right)_{n\in \mathbb{Z}}\not \in l^2\left(\mathbb{Z},A_\delta^2(D_\Theta)\right).
\]
Combining the above fact with \eqref{fi_suma_part} and \eqref{aprtine_l2} it follows that
\[
\left(\Phi_{2\pi}^D\left(\begin{bmatrix}{\rm e}_{n,\tau,0}\\ 
0\end{bmatrix}\right)\right)_{n\in \mathbb{Z}} \not \in l^2\left(\mathbb{Z},A_\delta^2(D_\Theta)\right),
\]
which yields the announced conclusion.
\end{proof}

\section{Proof of \Cref{th_main}}\label{sec_generator}

This section is devoted to the proof of our main result on the case of Dirichlet boundary noise. Our strategy consists in applying \Cref{th_reg_min} and \Cref{prop_reg_stoch} with the appropriate choice of spaces and operators. We continue to use the notation introduced in the previous sections and we introduce some new ones. 

 We denote $\widetilde X_{D,\delta,\Theta}=A^2_\delta(D_\Theta)$, with  $\delta \in \left(0,1\right)$, $\Theta\in \left(0,\frac{\pi}{4}\right)$. The results below are essentially independent of the choice 
 of $\delta$ and $\Theta$ in the above range, therefore we will use the simplified notation $\widetilde X_D$ for $\widetilde X_{D,\delta,\Theta}$. 

\begin{remark}\label{rem_urma_noua}
  If $f\in \widetilde X_D$ with $f''\in \widetilde X_D$, then according to  \Cref{cor_lem_intors}, we have that
 $\frac{{\rm d}^2f}{{\rm d}x^2}\in W^{-1,2}(0,\pi)$, thus $f\in W^{1,2}(0,\pi)\subset C[0,\pi]$. Consequently, $f(0)$ and $f(\pi)$ can be defined as the limits of 
 $f(x,0)$ when $x\in (0,\pi)$ tends to $0$ and to $\pi$, respectively. 
\end{remark}

Consider the operator $\widetilde A_D:\mathcal{D}(\widetilde A_D)\to \widetilde X_D$
defined by
\begin{equation}\label{def_dom_A}
\mathcal{D}(\widetilde A_D)=\left\{f\in \widetilde X_D\ \ | \ \ f''\in \widetilde X_D,\ f(0)=f(\pi)=0\right\},  
\end{equation}
where $f(0)$ and $f(\pi)$ are defined as in \Cref{rem_urma_noua}, and 
\begin{equation}\label{def_op_A}
\widetilde A_D f = f'' \qquad\qquad(f\in \mathcal{D}(\widetilde A_D)).    
\end{equation}

The main new ingredient we bring in this section is:

\begin{theorem}\label{th_hol_tilde}
The operator $\widetilde A_D$ defined in \eqref{def_dom_A}, \eqref{def_op_A} generates a bounded analytic
semigroup  $\widetilde{\mathbb{T}}^D$ on $\widetilde X_D$.  Moreover, $\widetilde{\mathbb{T}}^D$ is the restriction to $\widetilde X_D$ of the analytic semigroup $\mathbb{T}^D$  on $X$ generated 
by the Dirichlet Laplacian (i.e., the operator $A_D$
 introduced in \Cref{ex_heat_det}).
\end{theorem}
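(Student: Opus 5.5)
The plan is to prove that $\widetilde A_D$ is sectorial on $\widetilde X_D=A^2_\delta(D_\Theta)$ with dense domain. We then identify the resulting semigroup with the restriction of $\mathbb{T}^D$ through their resolvents.

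\textbf{Step 1: resolvent.} Fix $\lambda$ in a sector $\Sigma_{\frac{\pi}{2}+\varepsilon}=\{\lambda\neq 0,\ |\arg\lambda|<\frac{\pi}{2}+\varepsilon\}$ and $g\in \widetilde X_D$. By \Cref{cor_lem_intors}, $g\in W^{-1,2}(0,\pi)$, so $f=(\lambda I-A_D)^{-1}g\in W^{1,2}_0(0,\pi)$ is well defined. The task is to show that $f$ extends holomorphically to $D_\Theta$, that $f\in \mathcal{D}(\widetilde A_D)$, and that
\[
\|f\|_{A^2_\delta(D_\Theta)}\leqslant \frac{M}{|\lambda|}\,\|g\|_{A^2_\delta(D_\Theta)} .
\]
Writing $\mu=\sqrt{\lambda}$ with $\Re\mu>0$, the explicit Green function gives
\[
f(s)=\frac{1}{\mu\sinh(\mu\pi)}\left[\sinh(\mu(\pi-s))\int_0^s \sinh(\mu\xi)g(\xi)\,{\rm d}\xi+\sinh(\mu s)\int_s^\pi\sinh(\mu(\pi-\xi))g(\xi)\,{\rm d}\xi\right].
\]
Since $g$ is holomorphic on $D_\Theta$, the integrals can be taken along complex segments, so this formula defines the holomorphic extension.

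The key point is a pointwise bound. Along the straight path from $s$ to $0$ (respectively to $\pi$), the combined kernel behaves like $\frac{1}{\mu}{\rm e}^{-\mu(s-\xi)}$ up to exponentially small terms. Because $|\arg s|<\Theta<\frac{\pi}{4}$ and $|\arg\mu|<\frac{\pi}{4}+\frac{\varepsilon}{2}$, we get $\Re(\mu(s-\xi))\geqslant c|\mu||s-\xi|$ when $\varepsilon$ is small relative to $\frac{\pi}{4}-\Theta$. The operator $g\mapsto f$ is then dominated by a convolution with $|\mu|^{-1}{\rm e}^{-c|\mu|r}$ along rays, whose $L^1$ mass is of order $|\lambda|^{-1}$.

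To turn this into an estimate in $A^2_\delta(D_\Theta)$, I would use the decomposition from the proof of \Cref{lem_intoarsa}, $s^{\delta/2}(\pi-s)^{\delta/2}g=g_0+g_\pi$, and treat each sector piece in polar coordinates with a Schur test. The weight $\rho_\delta$ is what makes the Schur test work at the vertices, since $\delta<1$ keeps the Hardy-type operators $\int_0^s$ bounded in the weighted norm. I expect this resolvent estimate near the vertices $0$ and $\pi$ to be the \textbf{main obstacle}.

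\textbf{Step 2: domain.} From the ODE we have $f''=\lambda f-g\in\widetilde X_D$, and $f(0)=f(\pi)=0$, so $f\in\mathcal{D}(\widetilde A_D)$ and $(\lambda I-\widetilde A_D)f=g$. Injectivity of $\lambda I-\widetilde A_D$ follows from that of $\lambda I-A_D$, using \Cref{rem_urma_noua}, since $\mathcal{D}(\widetilde A_D)\subset\mathcal{D}(A_D)$. Hence $\widetilde A_D$ is the part of $A_D$ in $\widetilde X_D$ and $(\lambda I-\widetilde A_D)^{-1}=(\lambda I-A_D)^{-1}|_{\widetilde X_D}$.

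\textbf{Step 3: density.} Polynomials vanishing at $0$ and $\pi$ lie in $\mathcal{D}(\widetilde A_D)$. The density of polynomials in $A^2_\delta(D_\Theta)$ (for a convex polygon with this weight, e.g.\ via dilation about an interior point) should be among the Bergman facts in the appendix. Density of those vanishing at the endpoints then follows from the continuity of point evaluation on a dense subset of $\mathcal{D}(\widetilde A_D)$ in the graph norm. An alternative route is to show $\lambda(\lambda I-\widetilde A_D)^{-1}g\to g$ directly from the kernel estimate.

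\textbf{Step 4: conclusion.} Steps 1--3 and the standard characterization of generators of bounded analytic semigroups (sectorial estimate on $\Sigma_{\frac{\pi}{2}+\varepsilon}$ together with dense domain) yield $\widetilde{\mathbb{T}}^D$. Since its resolvent is the restriction of that of $A_D$, the Laplace transform or the Dunford integral representation gives $\widetilde{\mathbb{T}}^D_t=\mathbb{T}^D_t|_{\widetilde X_D}$.
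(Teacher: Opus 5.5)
There is a genuine gap in Step~1, at exactly the point you flagged as the main obstacle. The bound $|K(s,\xi)|\leqslant C|\mu|^{-1}{\rm e}^{-c|\mu||s-\xi|}$ is correct. However, an argument that uses only this positive convolution kernel cannot bound anything on $A^2_\delta(D_\Theta)$, whatever Schur weights or splitting $s^{\delta/2}(\pi-s)^{\delta/2}g=g_0+g_\pi$ you use, because the dominating operator is infinite on elements of the space. Take $g(s)=1/s$. Since $\int_{D_\Theta\cap\{|s|<1\}}|s|^{-2}\rho_\delta(s)\,{\rm d}A(s)\leqslant C\int_0^1 r^{-1+\delta}\,{\rm d}r<\infty$, we have $g\in A^2_\delta(D_\Theta)$ for every $\delta>0$. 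Yet $\int_0^{|s|}|\mu|^{-1}{\rm e}^{-c|\mu|(|s|-t)}\,t^{-1}\,{\rm d}t=\infty$ for every $s$. The same example refutes your claim that $\delta<1$ keeps the Hardy-type operators $\int_0^s$ bounded in the weighted norm. Along a ray the relevant measure is $t^{1+\delta}\,{\rm d}t$, and $t^{-(1+\delta)}$ is not integrable at $0$, so $\int_0^s g$ need not even exist.

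What you discarded as exponentially small is an essential cancellation. For $\xi\in[0,s]$ and $|\mu||s|\leqslant 1$, the term ${\rm e}^{-\mu(s+\xi)}$ has the same size as the leading one. The two combine into $\frac{1}{2\mu}\bigl({\rm e}^{-\mu(s-\xi)}-{\rm e}^{-\mu(s+\xi)}\bigr)=\frac{1}{\mu}{\rm e}^{-\mu s}\sinh(\mu\xi)=O(|\xi|)$ as $\xi\to 0$, which is the Dirichlet condition encoded in the Green kernel.

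The paper's proof of \Cref{est_princ_princ} (Appendix~I) keeps this factor. For $|\lambda|\geqslant 1$ it bounds $\sinh(\sqrt{\lambda}(\pi-s))/(\sqrt{\lambda}\sinh(\sqrt{\lambda}\pi))$ by $C|\lambda|^{-1/2}{\rm e}^{-c\sqrt{|\lambda|}\,|s|}$. It retains $|\sinh(\sqrt{\lambda}\,t{\rm e}^{i\theta})|\leqslant\sqrt{|\lambda|}\,t\,{\rm e}^{|\Re(\sqrt{\lambda}\,t{\rm e}^{i\theta})|}$ for $\sqrt{|\lambda|}\,t\leqslant 1$. It then runs Schur's test with constant test functions on the conjugated kernel $|I_{0,\lambda}|\,\widehat\rho_\delta$, where $\widehat\rho_\delta\leqslant (r/t)^{(1+\delta)/2}$ by the monotonicity of $r\mapsto|\pi-r{\rm e}^{i\theta}|$ in \Cref{lem:more}.

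The extra factor $t$ turns $t^{-(1+\delta)/2}$ into $t^{(1-\delta)/2}$. Only at that point does $\delta\leqslant 1$ enter, through the boundedness of $v^{(1-\delta)/2}\int_v^\infty {\rm e}^{-u}u^{(1+\delta)/2}\,{\rm d}u$ near $v=0$. So the weight is a constraint here, not the mechanism. The integral over $[s,\pi]$ is treated by the reflection $s\mapsto\pi-s$, using that $\sinh(\sqrt{\lambda}(\pi-\xi))$ vanishes at $\xi=\pi$. The paper also defines the operator only on the dense class $Y$ of \Cref{cor_imp} and extends by continuity. This spares it from justifying the complex-segment integrals for arbitrary $g\in A^2_\delta(D_\Theta)$, which your Step~1 would require.

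Steps~2 and~4 match the paper. Step~3 is a valid and simpler alternative to the class $Y$, but the reason is not continuity of point evaluation. Rather, $\phi_n=(s/\pi)^n$ and $\psi_n=((\pi-s)/\pi)^n$ tend to $0$ in $A^2_\delta(D_\Theta)$, so any polynomial approximant $p$ can be replaced by $p-p(0)\psi_n-p(\pi)\phi_n$. You would still need to prove density of polynomials in $A^2_\delta(D_\Theta)$, which is not in the appendix. The paper only uses density of polynomials in $A^2(D_\Theta)$ together with the isometry $f\mapsto s^{\delta/2}(\pi-s)^{\delta/2}f$ of $A^2_\delta(D_\Theta)$ onto $A^2(D_\Theta)$.
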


\begin{remark}\label{rem_exp_dir}
It is well-known that the semigroup $\mathbb{T}^D$ is exponentially stable in $X_D$.
We can thus apply \Cref{rem_exp_abs} to conclude that $(\widetilde{\mathbb T}^D)$ is exponentially stable on $\widetilde X_D$.
\end{remark}

An important ingredient of the proof of \Cref{th_hol_tilde} is the following approximation result:

\begin{proposition}\label{thm:main}
Let $\delta \in \left(0,1\right)$, $\Theta\in \left(0,\frac{\pi}{4}\right)$ and let $p$ be any polynomial on $\mathbb{C}$. Then there exists a sequence of functions $(g_n)_{n\in \mathbb{N}}$,
holomorphic in $D_\Theta$ and continuous on $\overline{D_\Theta}$,  such that:
\begin{enumerate}[label=\textup{(\alph*)}]
\item $g_n(0)=g_n(\pi)=0$ for all $n\in \mathbb{N}$;
\item $\|s^{\frac{\delta}{2}} (\pi-s)^{\frac{\delta}{2}}  g_n-p\|_{L^2(D_\Theta)}\to 0$ as $n\to\infty$;
\item  for every $n\in \mathbb{N}$ we have that $g_n''(s)
\in A^2(D_\Theta)$.
\end{enumerate}
\end{proposition}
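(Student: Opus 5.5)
Write $w(s)=s^{\frac{\delta}{2}}(\pi-s)^{\frac{\delta}{2}}$, using the principal branches, which are holomorphic on a neighbourhood of $\overline{D_\Theta}\setminus\{0,\pi\}$ since $D_\Theta\subset\{\Re s>0\}\cap\{\Re(\pi-s)>0\}$. The most natural candidate would be $g=p/w$. It is not admissible, however: it blows up like $|s|^{-\delta/2}$ at the vertices, so it is neither continuous up to $0,\pi$ nor does it vanish there. Moreover $g''\sim s^{-2-\delta/2}$, which fails to be in $A^2(D_\Theta)$ near $0$ because $\int_0 r^{-4-\delta} r\,{\rm d}r=\infty$.

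The plan is therefore to \emph{cut off} this singularity holomorphically at both vertices at a scale $\varepsilon=1/n$. I take
\[
g_n(s)=\frac{p(s)}{w(s)}\,\chi_\varepsilon(s)\,\chi_\varepsilon(\pi-s),\qquad
\chi_\varepsilon(s)=\frac{s^{k}}{(s+\varepsilon)^{k}},
\]
with a fixed integer $k\geqslant 2$. For $\varepsilon>0$, near $s=0$ one has $g_n(s)\approx p(0)\,\pi^{-\delta/2} s^{k-\delta/2}\varepsilon^{-k}$, and symmetrically near $s=\pi$. Hence $g_n$ is holomorphic in $D_\Theta$, continuous on $\overline{D_\Theta}$ and vanishes at $0$ and $\pi$, which gives (a). Away from the vertices $g_n$ is smooth up to the boundary, and near $0$ we have $g_n''=O(|s|^{k-2-\delta/2})$, which is square integrable for $k\geqslant 2$. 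The same holds at $\pi$, which gives (c).

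For (b) I write
\[
w g_n-p=p\bigl(\chi_\varepsilon(s)\chi_\varepsilon(\pi-s)-1\bigr).
\]
The key observation is that on the sector $|\arg s|<\Theta<\pi/2$ we have $|s+\varepsilon|\geqslant c_\Theta(|s|+\varepsilon)$. Hence $|\chi_\varepsilon(s)|\leqslant C_\Theta$ uniformly in $\varepsilon$, and the same bound holds for $\chi_\varepsilon(\pi-s)$ on $D_\Theta$. Moreover $\chi_\varepsilon(s)\to1$ pointwise for every $s\neq0$. Since $p$ is bounded on $D_\Theta$, dominated convergence yields $\|w g_n-p\|_{L^2(D_\Theta)}\to0$. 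If an explicit rate is wanted, one can observe that $|\chi_\varepsilon-1|\leqslant C\varepsilon/(|s|+\varepsilon)$, which gives an $L^2$ error of order $\varepsilon\sqrt{\log(1/\varepsilon)}$.

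The main point to check carefully is the uniform boundedness of the cutoffs. This is exactly where $\Theta<\pi/2$ is used: the pole $-\varepsilon$ of $\chi_\varepsilon$, and likewise $\pi+\varepsilon$ for the cutoff at $\pi$, stays at a distance comparable to $|s|+\varepsilon$ from the rhombus. One also has to confirm that the two cutoffs do not interact: near $0$ the factor $\chi_\varepsilon(\pi-s)$ is bounded and smooth, and vice versa near $\pi$. The remaining verifications, namely holomorphy of the branches and the local expansions at the vertices, are routine.
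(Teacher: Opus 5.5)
Your proposal is correct and is essentially the paper's argument: $g_n$ is $p\,s^{-\delta/2}(\pi-s)^{-\delta/2}$ times a holomorphic factor that vanishes to order at least two at $0$ and $\pi$, is uniformly bounded on $D_\Theta$ and tends to $1$ pointwise. Property (b) then follows by dominated convergence, and (c) follows from the local factorization $g_n=s^{k-\delta/2}H$ near each vertex.

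The only difference is the choice of cutoff. The paper uses the polynomial $f_n(s)=\bigl(1-(s/\pi)^n\bigr)^2\bigl(1-(1-s/\pi)^n\bigr)^2$, which is bounded by $16$ because $|s|,|\pi-s|\leqslant\pi$ on $D_\Theta$. Your rational factors $s^k/(s+\varepsilon)^k$ have their poles outside $\overline{D_\Theta}$ and are in fact bounded by $1$ there, since $\Re s>0$ and $\Re(\pi-s)>0$ on $D_\Theta$.
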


\begin{proof}
    Let $(\phi_n)_{n\in \mathbb{N}}$ and $(\psi_n)_{n\in \mathbb{N}}$ be the sequences of polynomials defined  by
\begin{equation}\label{fi_si_psi}
\phi_n(s) :=  \left(\frac{s}{\pi}\right)^n, \qquad \psi_n(s) :=
 \left(\frac{\pi-s}{\pi}\right)^n \qquad\qquad(n\in \mathbb{N}).
\end{equation}
Then $\phi_n$ and $\psi_n$ satisfy
\begin{equation}\label{prop_phi}
\phi_n(\pi) = 1, \quad \phi_n(0) = 0 \qquad\qquad(n\in \mathbb{N}),
\end{equation}
\begin{equation}\label{prop_psi}
\psi_n(\pi) = 0, \quad \psi_n(0) = 1  \qquad\qquad(n\in \mathbb{N}) .
\end{equation}
We next define
\begin{equation}\label{def_gn}
f_n(s):=(1-\phi_n(s))^2(1-\psi_n(s))^2 \qquad\qquad(n\in \mathbb{N},\ s\in \mathbb{C}),
\end{equation}
and we note that
\begin{enumerate}[label=\textup{(\roman*)}]
\item $(f_n)$ are polynomials;
\item For each $n\in \mathbb{N}$, $f_n$ has a zero of order at least $2$ at $0$ and at $\pi$;
\item $f_n\to 1$ in $L^2(D_\Theta)$.
\end{enumerate}
The first two properties above are a direct consequence of the construction of $f_n$ (namely of \eqref{prop_phi} and \eqref{prop_psi}), whereas  the third one follows by applying 
the facts that
\[
\lim_{n\to \infty} f_n(s)=1 \qquad\qquad(s\in D_\Theta),
\]
\[
|f_n(s)|\leqslant 16 \qquad\qquad(n\in \mathbb{N},\ s\in D_\Theta),
\]
and the dominated convergence theorem.

For $p$ an arbitrary polynomial we define 
\[
g_n(s):= s^{-\frac{\delta}{2}} (\pi-s)^{-\frac{\delta}{2}} p(s)\,f_n(s) \qquad\qquad(n\in \mathbb{N},\ s\in \mathbb{C}),
\]
where $f_n$ has been defined in \eqref{def_gn}. Using properties {\it i)} and {\it ii)} above it follows that 
$(g_n)_{n\in \mathbb{N}}$ are
holomorphic in $D_\Theta$, continuous on $\overline{D_\Theta}$
and satisfy  the conclusion {\it (a)} in the statement of the proposition. 

To prove  {\it (b)}, it suffices to note that
\begin{multline*}
\| s^\frac{\delta}{2}(\pi-s)^\frac{\delta}{2} g_n-p\|_{L^2(D_\Theta)}
=
\left\|p\left(  f_n-1\right)\right\|_{L^2(D_\Theta)}\\
\leqslant
\|p\|_{L^\infty(\overline{D_\Theta})}\,\|f_n-1\|_{L^2(D_\Theta)}
\end{multline*}
and to use property {\it iii)} above.

In order to prove conclusion {\it (c)}  we only have to analyze the behavior of
$g_n''(s)$ near $s=0$ and $s=\pi$.

Since $f_n$ has a zero of order at least $2$ at $0$, there exists a polynomial function
$h_{n,0}$ such that
\begin{equation}\label{eq:pn_factor_0}
g_n(s)=s^{2-\frac{\delta}{2}} (\pi-s)^{-\frac{\delta}{2}} h_{n,0}(s) \qquad\qquad (n\in \mathbb{N},\ s\in\mathbb{C}).
\end{equation}
Choose $\varepsilon\in(0,\pi/2)$ so that $\overline{B(0,\varepsilon)}$ does not intersect $\{\pi\}$.
Then the function $s\mapsto (\pi-s)^{-\frac{\delta}{2}}$  and $h_{n,0}$ are holomorphic and bounded on $\overline{B(0,\varepsilon)}$. Set
\[
H_{n,0}(s):=(\pi-s)^{-\frac{\delta}{2}} h_{n,0}(s),
\]
which is holomorphic on $B(0,\varepsilon)$ and bounded together with its first and second derivatives on $B(0,\varepsilon)$.
Then \eqref{eq:pn_factor_0} yields that
\begin{equation}\label{eq:Fn_as_power_0}
g_n(s)=s^{2-\frac{\delta}{2}}H_{n,0}(s)\qquad (|s|<\varepsilon).
\end{equation}
Differentiating twice we obtain:
\begin{multline}\notag
g_n''(s)
=
\left(2-\frac{\delta}{2}\right) \left(1-\frac{\delta}{2}\right)s^{-\frac{\delta}{2}} H_{n,0}(s)\\
+2\left(2-\frac{\delta}{2}\right)s^{1-\frac{\delta}{2}}H_{n,0}'(s)
+s^{2-\frac{\delta}{2}}H_{n,0}''(s).\
\end{multline}
Since $H_{n,0}$, $H_{n,0}'$, $H_{n,0}''$ are bounded on $B(0,\varepsilon)$, there exists $C_{n,\varepsilon}>0$ such that
\begin{equation}\label{eq:Fnpp_bound_0}
|g_n''(s)|\leqslant C_{n,\varepsilon}\,|s|^{-\frac{\delta}{2}}
\qquad \qquad(0<|s|<\varepsilon).
\end{equation}
The above estimate clearly implies that
\[
\int_{D\cap\{|s|<\varepsilon\}} |g_n''(s)|^2\,\mathrm{d}A(s)
\leqslant
\int_{-\Theta}^\Theta \int_0^\varepsilon C_{n,\varepsilon}^2 r^{-\delta}\, r\,\mathrm{d}r\,\mathrm{d}\varphi
=
2 C_{n,\varepsilon}^2\,\Theta \int_0^\varepsilon r^{1-\delta}\,\mathrm{d}r.
\]
The last integral is finite since $\delta\in (0,1)$ (this even holds for $\delta\in (0,2)$), so that 
\begin{equation*}
\int_{D\cap\{|s|<\varepsilon\}} |g_n''(s)|^2\,\mathrm{d}A(s) <\infty.
\end{equation*}
Similarly we can check that 
\begin{equation*}
\int_{D\cap\{|\pi-s|<\varepsilon\}} |g_n''(s)|^2\,\mathrm{d}A(s) < \infty,
\end{equation*}
which end the proof of assertion {\it iii)}, hence of our proposition. 
\end{proof}

As a consequence of \Cref{thm:main}  we obtain:

\begin{corollary}\label{cor_imp}
 For $\delta\in (0,1)$ we denote by $Y$ the vector space formed of all the functions $g$ which are holomorphic in $D_\Theta$ and
 continuous on $\overline{D_\Theta}$, with  
 $g(0)=g(\pi)=0$ and $g''\in A^2(D_\Theta)$. Then $Y$ is dense in $\tilde X_D$.
\end{corollary}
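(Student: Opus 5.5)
The plan is to reduce density in $\tilde X_D=A^2_\delta(D_\Theta)$ to density of polynomials in the unweighted Bergman space $A^2(D_\Theta)$. The key device is the multiplication operator
\[
M f(s)=s^{\frac{\delta}{2}}(\pi-s)^{\frac{\delta}{2}}f(s),
\]
which we use with a holomorphic branch of the power on $D_\Theta$, positive on $(0,\pi)$. Since $|s^{\frac{\delta}{2}}(\pi-s)^{\frac{\delta}{2}}|^2=\rho_\delta(s)$, the map $M$ is an isometry from $A^2_\delta(D_\Theta)$ into $A^2(D_\Theta)$. It is in fact onto: for $h\in A^2(D_\Theta)$, the function $M^{-1}h=s^{-\frac{\delta}{2}}(\pi-s)^{-\frac{\delta}{2}}h$ is holomorphic on $D_\Theta$ and has weighted norm $\|h\|_{A^2(D_\Theta)}$. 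So $M$ is a unitary operator from $\tilde X_D$ onto $A^2(D_\Theta)$.

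With this in hand, fix $f\in\tilde X_D$ and $\varepsilon>0$, and set $h=Mf\in A^2(D_\Theta)$. The first step is to find a polynomial $p$ with $\|h-p\|_{L^2(D_\Theta)}<\varepsilon$. The second step applies \Cref{thm:main} to this $p$, giving $g_n$ with $\|Mg_n-p\|_{L^2(D_\Theta)}<\varepsilon$ for $n$ large. Properties (a) and (c) of \Cref{thm:main}, together with holomorphy in $D_\Theta$ and continuity on $\overline{D_\Theta}$, say exactly that $g_n\in Y$. The inclusion $Y\subset\tilde X_D$ holds because continuity on the compact closure makes $g_n$ bounded and $\rho_\delta$ is bounded. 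Then, by the isometry,
\[
\|g_n-f\|_{A^2_\delta(D_\Theta)}=\|Mg_n-h\|_{L^2(D_\Theta)}\leqslant \|Mg_n-p\|_{L^2(D_\Theta)}+\|p-h\|_{L^2(D_\Theta)}<2\varepsilon,
\]
which gives the density.

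The main obstacle is the first step: density of polynomials in $A^2(D_\Theta)$. The rhombus $D_\Theta$ is a bounded convex domain, in particular a Carathéodory domain (it is the interior of its closure and its complement is connected). Mergelyan-type results for Bergman spaces (Carleman, Farrell–Markushevich) then give completeness of polynomials. A more hands-on route avoids quoting this. Since $D_\Theta$ is star-shaped about its centre $c=\pi/2$, one can use the dilations $h_r(s)=h(c+r(s-c))$ with $r<1$. Each $h_r$ is holomorphic on a neighbourhood of $\overline{D_\Theta}$, and $h_r\to h$ in $L^2(D_\Theta)$ as $r\to1^-$; this follows from continuity of dilations in $L^2$ after extending $h$ by zero. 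Runge's theorem then approximates $h_r$ uniformly on $\overline{D_\Theta}$ by polynomials, because the complement of the closure is connected. I would present this dilation–Runge argument, citing the classical result as an alternative.
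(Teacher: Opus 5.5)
Your proof is correct and is essentially the paper's argument: approximate $s^{\frac{\delta}{2}}(\pi-s)^{\frac{\delta}{2}}f$ by a polynomial in $A^2(D_\Theta)$, approximate that polynomial by $s^{\frac{\delta}{2}}(\pi-s)^{\frac{\delta}{2}}g_n$ with $g_n\in Y$ via \Cref{thm:main}, and conclude with the triangle inequality and the isometry $\|f\|_{A^2_\delta(D_\Theta)}=\|s^{\frac{\delta}{2}}(\pi-s)^{\frac{\delta}{2}}f\|_{A^2(D_\Theta)}$. The only difference is that you prove density of polynomials in $A^2(D_\Theta)$ yourself, via dilations about $\frac{\pi}{2}$ followed by Runge's theorem (valid since $D_\Theta$ is convex), whereas the paper simply cites Duren--Schuster for this fact.
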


\begin{proof}
    Let $f\in \tilde X_D$. Using the density of polynomials in $A^2(D_\theta)$
(see, for instance, Duren and Schuster \cite[p.14]{duren2024bergman})
it follows that for every $\varepsilon>0$ there exists a polynomial $q_\varepsilon$
such that
\begin{equation}\label{prima_aproximare}
\| q_\varepsilon -  s^\frac{\delta}{2}(\pi-s)^\frac{\delta}{2} f\|_{A^2(D_\Theta)}\leqslant \frac{\varepsilon}{2}.
\end{equation}
On the other hand, from \Cref{thm:main} it follows that there exists $g_\varepsilon\in Y$ such that
\begin{equation}\label{doua_aproximare}
\|s^{\frac{\delta}{2}} (\pi-s)^{\frac{\delta}{2}} g_\varepsilon -q_\varepsilon\|_{A^2 (D_\Theta)}\leqslant \frac{\varepsilon}{2}.
\end{equation}
Putting together \eqref{prima_aproximare} and \eqref{doua_aproximare} it follows that 
there exists $g_\varepsilon\in Y$ with
\begin{equation}\label{treia_aproximare}
\|g_\varepsilon - f\|_{A^2_\delta(D_\Theta)}\leqslant \varepsilon,
\end{equation}
so that the conclusion follows.
\end{proof}

For $\varphi\in(0,\pi/2)$ we define the sector
\begin{equation}\label{def_sector}
\Sigma_\varphi := \{\lambda \in \mathbb{C}^*: |\arg \lambda| < \pi/2 + \varphi \}.
\end{equation}
The main ingredient of the proof of \Cref{th_hol_tilde} consists in estimating the resolvents $(\lambda \mathbb{I}-\tilde A_D)^{-1}$
of the operator $\widetilde A_D$ introduced in \eqref{def_dom_A}, \eqref{def_op_A}. As shown in the proof below of \Cref{th_hol_tilde}
these resolvents are extentions to $\tilde X_D$ of the  family of operators
$\left(\widetilde{\mathcal{R}}_\lambda\right)_{\lambda\in \Sigma_\varphi}$, where for each $\lambda\in \Sigma_\varphi$,
the operator $\widetilde{\mathcal{R}}_\lambda$ is defined on the space  $Y$ introduced in \Cref{cor_imp} by
\begin{multline}\label{eq:Rdef}
(\widetilde{\mathcal{R}}_\lambda f)(s)
=
\frac{\sinh(\sqrt{\lambda} (\pi-s))}{\sqrt{\lambda} \sinh(\sqrt{\lambda} \pi)}\int_{0}^{s}\sinh(\sqrt{\lambda}w)f(w)\, {\rm d}w
\\
+
\frac{\sinh(\sqrt{\lambda}s)}{\sqrt{\lambda} \sinh(\sqrt{\lambda} \pi)}
\int_{s}^{\pi}\sinh(\sqrt{\lambda} (\pi-w))f(w)\, {\rm d}w \quad\qquad(s\in D_\Theta,\lambda\in 
\Sigma_\varphi, f\in Y),
\end{multline}
where $\int_0^s$ stands for the integral along the segment  $[0,s]$ and $\int_s^\pi$ for the integral along $[s,\pi]$. More precisely, the following result 
holds.

\begin{theorem}\label{est_princ_princ}
   Let $\delta\in (0,1)$, $\Theta\in \left(0,\frac{\pi}{4}\right)$ and $\varphi\in\left(0,\frac{\pi}{2}-2\Theta\right)$. Then for every $\lambda\in \Sigma_\varphi$, where $\Sigma_\varphi$ has been defined in \eqref{def_sector}, the operator 
  $\widetilde{\mathcal{R}}_\lambda$ can be extended to an operator
  in $\mathcal{L}(\widetilde X_D)$. Moreover, there exists a constant $M>0$ such that
  \begin{equation}\label{estsec_tilde}
      \|\widetilde{\mathcal{R}}_\lambda\|_{\mathcal{L}(\widetilde X_D)}\leqslant \frac{M}{1+|\lambda|}
      \qquad\qquad(\lambda\in \Sigma_\varphi).
  \end{equation}
\end{theorem}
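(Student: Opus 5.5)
We propose to prove \eqref{estsec_tilde} by splitting $D_\Theta$ into a part near $0$, a part near $\pi$ and a middle region, and estimating the Green-type kernel in \eqref{eq:Rdef} on each piece through explicit exponential bounds. Write $\mu=\sqrt{\lambda}$ with $\Re\mu>0$. For $\lambda\in\Sigma_\varphi$ we have $|\arg\mu|<\frac{\pi}{4}+\frac{\varphi}{2}$. For $s=r{\rm e}^{i\theta}\in D_\Theta$ and $w\in[0,s]$ this gives $|\arg(\mu w)|<\frac{\pi}{4}+\frac{\varphi}{2}+\Theta<\frac{\pi}{2}-\Theta$, because $\varphi<\frac{\pi}{2}-2\Theta$. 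The same bound holds for $\mu(\pi-w)$ with $w\in[s,\pi]$. Hence there is $c>0$, depending only on $\Theta,\varphi$, with
\[
\Re(\mu w)\geqslant c|\mu||w|, \qquad \Re(\mu(\pi-w))\geqslant c|\mu||\pi-w|.
\]
Consequently $|\sinh(\mu\pi)|\geqslant c'{\rm e}^{\Re\mu\,\pi}$ for $|\lambda|\geqslant 1$. The kernel of $\widetilde{\mathcal R}_\lambda$ is
\[
G_\lambda(s,w)=\frac{\sinh(\mu(\pi-s))\sinh(\mu w)}{\mu\sinh(\mu\pi)} \qquad (w\in[0,s]),
\]
and its symmetric counterpart for $w\in[s,\pi]$. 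Using the identity $\Re(\mu(\pi-s))+\Re(\mu w)-\Re(\mu\pi)=-\Re(\mu(s-w))$ together with the angular control above, we aim for the bound
\[
|G_\lambda(s,w)|\leqslant \frac{C}{|\mu|}\,{\rm e}^{-c|\mu||s-w|},
\]
uniformly on the relevant segments and for $|\lambda|\geqslant 1$. This is exactly a one-dimensional heat resolvent kernel bound.

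Next we would convert this kernel bound into an operator bound on $A^2_\delta(D_\Theta)$. The integrals run along segments, not over the area, so a direct Schur test in $L^2(\rho_\delta\,{\rm d}A)$ is not immediate. We would proceed in polar coordinates $s=r{\rm e}^{i\theta}$. Near the vertex $0$, the segment $[0,s]$ is a radial ray, and the contribution $\int_0^r G_\lambda(r{\rm e}^{i\theta},t{\rm e}^{i\theta})f(t{\rm e}^{i\theta}){\rm e}^{i\theta}\,{\rm d}t$ is a one-dimensional convolution-type operator on each ray. A weighted Young/Hardy inequality in $L^2(r^{1+\delta}{\rm d}r)$ with kernel $|\mu|^{-1}{\rm e}^{-c|\mu|(r-t)}$ then gives the norm $C|\mu|^{-2}=C|\lambda|^{-1}$. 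Integrating in $\theta$ recovers the $A^2_\delta$ norm near $0$.

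The segment $[s,\pi]$ is not radial from $0$. There we would use the holomorphy of $f$ to deform the path into $[s,\,\text{point on ray}]$ followed by a ray from the vertex $\pi$, so that the same radial argument applies at $\pi$. Alternatively, we would invoke the decomposition \eqref{desco_2} from the proof of \Cref{lem_intoarsa}: writing $s^{\delta/2}(\pi-s)^{\delta/2}f=g_0+g_\pi$ with $g_0\in A^2(\Delta_\Theta)$ and $g_\pi\in A^2(\pi-\Delta_\Theta)$, we would treat each piece in its own sector, integrating along rays from its own vertex. Once the a priori bound is proved on the dense set $Y$ (\Cref{cor_imp}), $\widetilde{\mathcal R}_\lambda$ extends by continuity to $\mathcal L(\widetilde X_D)$.

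Bounded $\lambda\in\Sigma_\varphi$, where $|\lambda|<1$, are handled by compactness. The set $\{\lambda\in\Sigma_\varphi:|\lambda|\leqslant 1\}$ stays away from the Dirichlet spectrum $\{-k^2\}$ except possibly near $-1$; since $\varphi<\frac{\pi}{2}$, we have $|\arg\lambda|<\pi$, and one checks that $\sinh(\mu\pi)\neq0$ on the closure minus $0$. Near $\lambda=0$ the kernel is the bounded Green function $s(\pi-w)/\pi$. This yields a uniform bound there, giving $M/(1+|\lambda|)$ overall.

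The main obstacle is the second step: obtaining an $L^2(\rho_\delta\,{\rm d}A)$ bound from path integrals whose geometry is not adapted to a single set of polar coordinates. The weight $\rho_\delta$ and the two vertices make a global Schur test awkward. I would first try the splitting of $f$ into $g_0,g_\pi$ via \eqref{desco_2}, since it reduces everything to sectors with radial integration. In doing so, I would track that the weights $s^{-\delta/2}$ remain harmless because $\delta<1$, using a Hardy inequality on each ray.
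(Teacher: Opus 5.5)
Your overall plan is the one the paper carries out: an exponential bound for the kernel along rays issued from a vertex, a one-dimensional weighted estimate on each ray, integration in $\theta$, and extension from $Y$ by density. However, the key one-dimensional step fails as you formulate it.

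The bound $|G_\lambda(s,w)|\leqslant \frac{C}{|\mu|}{\rm e}^{-c|\mu||s-w|}$ is true, but it discards the vanishing of $\sinh(\mu w)$ at $w=0$. For this kernel, the claimed weighted Young/Hardy inequality in $L^2(r^{1+\delta}\,{\rm d}r)$ is false. Since $\delta>0$, $t^{-1-\delta}$ is not integrable at $0$, so this space is not contained in $L^1(0,r)$. For instance, $t^{-1-\frac{\delta}{2}}|\log t|^{-1}$ belongs to it near $0$, while $\int_0^r {\rm e}^{-c|\mu|(r-t)}\,t^{-1-\frac{\delta}{2}}|\log t|^{-1}\,{\rm d}t=+\infty$ for every $r>0$.

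Holomorphy does not rescue this. For $0<\epsilon<\frac{\delta}{2}$, the function $s^{-1-\frac{\delta}{2}+\epsilon}$ lies in $A^2_\delta(D_\Theta)$ and is not integrable along any ray from $0$. By density of $Y$ and Fatou's lemma, no uniform a priori bound on $Y$ can therefore come from the kernel $|\mu|^{-1}{\rm e}^{-c|\mu|(r-t)}$.

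You must keep the Dirichlet factor. By \Cref{lem:sinh}(a),(b), $|\sinh(\mu t{\rm e}^{i\theta})|\leqslant \min\{1,|\mu|t\}\,{\rm e}^{\Re(\mu t{\rm e}^{i\theta})}$. Hence $|I_{0,\lambda}(r{\rm e}^{i\theta},t{\rm e}^{i\theta})|\leqslant C|\mu|^{-1}\min\{1,|\mu|t\}\,{\rm e}^{-c|\mu|(r-t)}$. With this, the Schur test for the kernel $|I_{0,\lambda}|\,\widehat\rho_\delta$, where $\widehat\rho_\delta\leqslant (r/t)^{\frac{1+\delta}{2}}$, goes through. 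The case $\sqrt{|\lambda|}\,t\leqslant 1$ of \eqref{Schur2} is exactly where the factor $|\mu|t$ is indispensable; without it the integral behaves like $t^{-\frac{1+\delta}{2}}$.

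The same objection applies to your treatment of $|\lambda|<1$. Boundedness of the Green function is not enough; you need its linear vanishing at the vertex where the integration starts. The paper obtains this from $|\sinh(\sqrt{\lambda}s)|\leqslant c_1\sqrt{|\lambda|}|s|$ and $|\sinh(\pi\sqrt{\lambda})|\geqslant c_2\sqrt{|\lambda|}$. Appealing to compactness in $\lambda$ would presuppose that each $\widetilde{\mathcal R}_\lambda$ is bounded, which is precisely what is at stake.

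Separately, what you single out as the main obstacle is not one. The segment $[s,\pi]$ is itself a ray issued from the vertex $\pi$. Write $\widetilde{\mathcal R}_\lambda=\widetilde{\mathcal R}_{0,\lambda}+\widetilde{\mathcal R}_{\pi,\lambda}$, with integration over $[0,s]$ and over $[s,\pi]$ respectively. Since $D_\Theta$ and $\rho_\delta$ are invariant under $s\mapsto \pi-s$, and $(\widetilde{\mathcal R}_{\pi,\lambda}f)(\pi-s)=(\widetilde{\mathcal R}_{0,\lambda}\tilde f)(s)$ with $\tilde f(w)=f(\pi-w)$, the second piece reduces to the first. No contour deformation and no use of \eqref{desco_2} are needed.

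Conversely, splitting the domain $D_\Theta$ into neighbourhoods of $0$, of $\pi$ and a middle region does not localize the operator. For $s$ near $\pi$, $(\widetilde{\mathcal R}_{0,\lambda}f)(s)$ involves $f$ on a whole ray from $0$. The paper therefore splits the operator rather than the domain. It treats $\widetilde{\mathcal R}_{0,\lambda}$ on all of $D_\Theta$ in polar coordinates at $0$, using that the rhombus is star-shaped with respect to $0$. The factor $|\pi-s|^\delta$ of the weight is handled by the monotonicity of $r\mapsto|\pi-r{\rm e}^{i\theta}|$ from \Cref{lem:more}(a).

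A minor slip: $\frac{\pi}{4}+\frac{\varphi}{2}+\Theta<\frac{\pi}{2}$, not $<\frac{\pi}{2}-\Theta$. Your angular bounds only need the former.
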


The proof of the above theorem, although elementary, requires an important amount of intermediate estimates, so we postpone it to Appendix I.

We are now in a position to prove \Cref{th_hol_tilde}.

\begin{proof}[Proof of \Cref{th_hol_tilde}] 
Consider $f\in \widetilde X_D$ and let $(f_n)$ be a sequence in $Y$ , where $Y$ has been introduced in \Cref{cor_imp},
such that $f_n\to f$ in $\tilde X_D$. Let  $\varphi\in\left(0,\frac{\pi}{2}-2\Theta\right)$. For each $\lambda\in \Sigma_\varphi$ and $n\in \mathbb{N}$ we set $g_{\lambda,n}=\mathcal{R}_\lambda f_n$, where $\mathcal{R}_\lambda$ are the operators defined in \eqref{eq:Rdef}. It is easy to check that $g_{\lambda,n}\in Y\subset \mathcal{D}(\tilde A_D)$ and that
 \begin{equation}\label{eq_sir_rez}
    \lambda g_{\lambda,n}(s)- g_{\lambda,n}''(s)=f_n(s) \qquad\qquad(n\in \mathbb{N},\lambda\in 
    \Sigma_\varphi,s\in D_\Theta),
    \end{equation}
    \begin{equation}\label{ci_sir_rez}
    g_{\lambda,n}(0)=g_{\lambda,n}(\pi)=0 \qquad\qquad(n\in \mathbb{N}).
    \end{equation}
   Moreover, we know from \Cref{est_princ_princ} that for each $\lambda\in \Sigma_\varphi$,
   $(g_{\lambda,n})$ is a Cauchy sequence in $\tilde X_D$. Combining this fact and \eqref{eq_sir_rez} it follows that
   \begin{equation}\label{dubla_conv}
   g_{\lambda,n}\to g_\lambda \qquad {\rm and}\qquad g_{\lambda,n}''\to f- \lambda  g_\lambda
   \qquad {\rm in}\qquad \tilde X_D.
\end{equation}
On the other hand $g_{\lambda,n}\to g_\lambda$ in $\mathcal{D}'(D_\theta)$ so that $g_{\lambda,n}''\to g_{\lambda,}''$
in $\mathcal{D}'(D_\Theta)$. Combining this fact and \eqref{dubla_conv} it follows that 
\begin{equation}\label{reg_noua}
g_\lambda,\ g_{\lambda}''\in \tilde X_D,    
\end{equation}
\begin{equation}\label{eq_lim_rez}
    \lambda g_{\lambda}(s)- g_{\lambda}''(s)=f(s) \qquad\qquad(\lambda\in 
    \Sigma_\varphi,s\in D_\Theta).
\end{equation}
Additionally, it follows from \eqref{dubla_conv} and \Cref{cor_lem_intors} that
 \begin{equation*}
   g_{\lambda,n}\to g_\lambda \qquad {\rm and}\qquad \frac{{\rm d}^2 g_{\lambda,n}}{{\rm d}x^2}\to f- \lambda  g_\lambda
   \qquad {\rm in}\qquad W^{-1,2}(0,\pi).
\end{equation*}
The above convergences and \eqref{ci_sir_rez} yield that $g_{\lambda,n}\to g_\lambda$ in $W^{1,2}(0,\pi)$ so that 
$g_\lambda(0)=g_\lambda(\pi)=0$, in the sense of \Cref{rem_urma_noua}. This fact, combined with \eqref{reg_noua} and \eqref{eq_lim_rez}
imply that $g_\lambda\in \mathcal{D}(\tilde A_D)$ and
\[
\lambda g_\lambda- \tilde A_D g_\lambda =f.
\]
We have thus shown that for each $\varphi\in\left(0,\frac{\pi}{2}-2\Theta\right)$ we have that $\Sigma_\varphi$ is contained in the resolvent set $\rho(\tilde A_D)$ of $A_D$ and 
\begin{equation}\label{rez_frumoasa}
(\lambda \mathbb{I}-\tilde A_D)^{-1}=\widetilde{\mathcal{R}}_\lambda \qquad\qquad(\lambda\in \Sigma_\varphi).
\end{equation}
On the other hand, we note that since $Y \subset \mathcal{D}(\widetilde A_D)$, we can apply \Cref{cor_imp} to conclude 
that $\mathcal{D}(\widetilde A_D)$ is dense in $\widetilde X_D$. This fact, combined with \eqref{rez_frumoasa} and 
\Cref{est_princ_princ}, shows that $\widetilde A_D$ is sectorial on $\widetilde X_D$ so the first conclusion follows.

Moreover, we have seen in  \Cref{cor_lem_intors} that $\tilde X_D$ is a subspace of $X_D=W^{-1,2}(0,\pi)$. Let
 $\mathbb{T}^D$ be the analytic semigroup on $X_D$ generated by the Dirichlet Laplacian (i.e., the operator $A_D$
 introduced in \Cref{ex_heat_det}). It is clear that 
 \begin{equation}\label{rez_eg}
 \left[(\lambda\mathbb{I}-A_D)^{-1} f\right](x) = \left[(\lambda\mathbb{I}-\tilde A_D)^{-1} f\right](x)
 \qquad\qquad\left(f\in Y,x\in (0,\pi)\right),
 \end{equation}
 since both quantities in the above formula are given by the right hand side of \eqref{eq:Rdef}.
 Taking into account that, according to \Cref{cor_imp}, $Y$ is dense in $\tilde X_D$, it follows that \cref{rez_eg} holds for any $f$ in $\tilde X_D$.  From  \cite[Proposition 4.4]{EN06} we deduce that  $\tilde{\mathbb{T}}^D$ is indeed
 the restriction of $\mathbb{T}^D$ to $\tilde X_D$.

\end{proof}

Let us now give the proof of our main result.

\begin{proof}[Proof of \Cref{th_main}]
The  idea of the proof is to first apply \Cref{prop_reg_stoch}, with an appropriate choice of spaces and operators. We specify below these spaces and operators and we next check that they satisfy all the assumptions in  \Cref{prop_reg_stoch}.

We have seen in \Cref{ex_heat_stoh}   that the process $\psi$ defined by \eqref{duh_stoh_dir} has a  representative which is continuous in time with values in $W^{-1,2}(0,\pi)$
and that it solves \eqref{PROBHEAT1_stoch}.  On the other hand, we have seen in \Cref{th_hol_tilde} that the restriction to $\tilde X_D=A_\delta^2(D_\Theta)$, denoted $\tilde{\mathbb{T}}^D$,  of the semigroup $\mathbb{T}^D$ introduced in \Cref{ex_heat_det}  is an analytic semigroup on $\tilde X_D$. Moreover, let $\Phi=(\Phi_t^D)_{t\geqslant 0}$ be the family of maps defined by \eqref{Phi_t} with $B=B_D$ (defined in \eqref{def_B_primul}) and $\mathbb{T}=\mathbb{T}^D$. As recalled in  \Cref{ex_heat_det} , $\Sigma_D=\begin{bmatrix} \mathbb{T}^D & \Phi^D\end{bmatrix}$ is a well-posed linear control system and
its reachable space $R_{\Sigma_D}$ is well defined and satisfies   $R_{\Sigma_D}=A^2\left(D_\frac{\pi}{4}\right)$, which is continuously embedded in $\tilde X_D$.
Finally, we know from \Cref{expint} that the operators $(L_\tau^D)_{\tau>0}$ defined by \eqref{def_tilde_phi_dir}
are Hilbert-Schmidt  from $L^2\left([0,\tau];\mathbb{C}^2\right)$ to $A^2_\delta(D_\Theta)$.

We are thus in a position to apply \Cref{prop_reg_stoch} to obtain that for $\psi_0\in \tilde X_D$ the process $\psi$ defined by \eqref{duh_stoh_dir} has a continuous representative with values in $\tilde X_D$. 

To show that the above conclusion is sharp, we note that if we have  either that $\delta=0$ and $\Theta\in \left(0,\frac{\pi}{4}\right)$ or that $\delta\in (0,1)$ and $\Theta=\frac{\pi}{4}$, then, according to \Cref{prop_sharp}, the input map $\Phi_\tau^D$ defined in \eqref{DEFFITAUSIMPLU} is not
Hilbert-Schmidt  from $L^2\left([0,\tau];\mathbb{C}^2\right)$ into $A^2_\delta(D_\Theta)$.  
Thus, according to \Cref{th_reg_min}, for $\delta$ and $\Theta$ taking the critical values above,
the mild solution solution $\psi$ of \eqref{PROBHEAT1_stoch}  does not generally take values in $A_\delta^2\left(D_\frac{\pi}{4}\right)$.
\end{proof}

\section{Proof of \Cref{th_main_neu}}\label{sen_meu_main}

In this section we describe the adaptation of our approach for the  heat  equation on $[0,\pi]$ with noise in the Neumann boundary conditions. More precisely, we consider the system
\eqref{PROBHEAT1_stoch_neu}.

As in the case of Dirichlet boundary conditions, our aim consists of proving that the state 
trajectory $\psi$ solving \eqref{PROBHEAT1_stoch_neu} has a continuous in time representative which takes 
values in a Hilbert space of holomorphic functions of weighted Bergman type. Continuing the analogy with the Dirichlet case, this property is closely 
related to regularity results which have been recently obtained for the deterministic counterpart of
\eqref{PROBHEAT1_stoch_neu}, i.e., for the deterministic initial and boundary value problem \eqref{PROBHEAT1_neu}.

The strategy we use to prove the above result is to deduce the essential properties of the semigroup and the input maps associated 
to \eqref{PROBHEAT1_stoch_neu} from the corresponding properties the semigroup and input maps associated to the system with noise in the Dirichlet boundary conditions. To achieve this goal, we need more properties of Bergman type spaces on a rhombus. Although these properties are quite elementary, we did not find them precisely stated in the existing literature, so that we prove them in Appendix II in   \Cref{app2_fin}.

Let $\tilde X_N=A_\delta^{1,2}(D_\Theta)$, where the space $A_\delta^{1,2}(D_\Theta)$ has been defined in 
\eqref{Sob_Berg}.
Let $\tilde A_N:\mathcal{D}(\tilde A_N)\to \tilde X_N$ be the operator defined by 
\begin{equation}\label{def_dom_A_neu}
\mathcal{D}(\widetilde A_N)=\left\{f\in \widetilde X_N\ \ | \ \ f''\in \widetilde X_N,\    f'(0)=f'(\pi)=0\right\},  
\end{equation}
\begin{equation}\label{def_op_A_neu}
\widetilde A_N f = f'' \qquad\qquad(f\in \mathcal{D}(\widetilde A_N)).    
\end{equation}

The main ingredient of the proof of \Cref{th_main_neu} is the following result:

\begin{theorem}\label{th_hol_tilde_neu}
The operator $\widetilde A_N$ defined in \eqref{def_dom_A_neu}, \eqref{def_op_A_neu} generates an analytic
semigroup  $\widetilde{\mathbb{T}}^N$ on $\widetilde X_N$. Moreover, $\widetilde{\mathbb{T}}^N$ is the restriction
to $\widetilde X_N$ of the semigroup $\mathbb{T}^N$ introduced in \Cref{ex_heat_det_neu}. 
\end{theorem}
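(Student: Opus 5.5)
The plan is to reduce the Neumann problem on $\widetilde X_N=A^{1,2}_\delta(D_\Theta)$ to the Dirichlet results of \Cref{th_hol_tilde}, using the differentiation map $f\mapsto f'$. This map intertwines the Neumann and Dirichlet Laplacians: if $f\in\mathcal{D}(\widetilde A_N)$, then $g=f'$ satisfies $g(0)=g(\pi)=0$ and $g''=(f'')'\in A^2_\delta(D_\Theta)$, so $g\in\mathcal{D}(\widetilde A_D)$ and $\widetilde A_D f'=(\widetilde A_N f)'$.

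\textbf{Resolvent of $\widetilde A_N$.} Fix $\varphi\in(0,\frac{\pi}{2}-2\Theta)$, $\lambda\in\Sigma_\varphi$ and $h\in\widetilde X_N$. We solve $\lambda f-f''=h$ with $f'(0)=f'(\pi)=0$. Differentiating, $g=f'$ must satisfy $\lambda g-g''=h'$ with Dirichlet conditions. So we set $g:=(\lambda\mathbb{I}-\widetilde A_D)^{-1}h'$, which is well defined since $h'\in A^2_\delta(D_\Theta)$. We then recover $f$ from
\[
f=\lambda^{-1}(h+g').
\]
Indeed, $g'' = \lambda g - h'$ gives $(h+g')'=\lambda g$, hence $f'=g$, and $f''=g'=\lambda f-h$. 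Since $g\in\mathcal{D}(\widetilde A_D)$, we have $g'\in W^{1,2}(0,\pi)$ with $g'$ holomorphic.

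\textbf{Bounds on $f$.} From \eqref{estsec_tilde},
\[
\|f'\|_{A^2_\delta}=\|g\|_{A^2_\delta}\leqslant \frac{M}{1+|\lambda|}\|h'\|_{A^2_\delta}.
\]
Bounding $\|f\|_{A^2_\delta}$ requires control of $g'$ in $A^2_\delta(D_\Theta)$. The key extra fact, which I expect to take from the elementary Bergman-space properties proved in Appendix II, is a Poincar\'e/Hardy-type inequality on the rhombus:
\[
\|u\|_{A^2_\delta}\leqslant C\bigl(|u(x_0)|+\|u'\|_{A^2_\delta}\bigr)
\]
for some fixed point $x_0$, or a variant with a mean value. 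This gives $g'\in A^2_\delta$. A direct uniform bound for $\lambda^{-1}(h+g')$ is also obtainable: for $|\lambda|\ge 1$, interpolate $\|g'\|$ between $\|g\|$ and $\|g''\|=\|\lambda g-h'\|$. That route needs an interpolation (Landau–Kolmogorov type) inequality in $A^2_\delta(D_\Theta)$, which is the delicate point. I would instead try to avoid it by using the explicit kernel \eqref{eq:Rdef}: differentiate it and repeat the Appendix I estimates.

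\textbf{Small $\lambda$ and sectoriality.} Near $\lambda=0$, the point $0$ is an eigenvalue of the Neumann Laplacian (constants), so only analyticity, not boundedness, is claimed. This matches the statement, which does not assert that the semigroup is bounded. Hence it suffices to prove a resolvent bound of the form $M/|\lambda-\omega|$ on $\omega+\Sigma_\varphi$. For $|\lambda|\ge 1$ one has $\|f\|\lesssim |\lambda|^{-1}\bigl(\|h\|+\|g'\|\bigr)$, so we need $\|g'\|_{A^2_\delta}\lesssim\|h\|_{\widetilde X_N}$ uniformly. I expect this bound, the control of $\|(\lambda-\widetilde A_D)^{-1}h'\|'$ by $\|h\|_{A^{1,2}_\delta}$ uniformly on the sector, to be the main obstacle. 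My first attempt would write $g'=\lambda f-\lambda\cdot\lambda^{-1}h$ circularly, so I would rather use the explicit formula: $g'$ is an integral of $\cosh$-kernels against $h'$, and integration by parts transfers the derivative onto $h$ using the boundary terms $h(0),h(\pi)$. Those boundary values are controlled by $\|h\|_{A^{1,2}_\delta}$ through the $W^{1,2}$ embedding of \Cref{cor_lem_intors}.

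\textbf{Density and identification.} For density of $\mathcal{D}(\widetilde A_N)$, given $h$, approximate $h'$ by elements of $Y$ from \Cref{cor_imp} and integrate, adjusting with polynomials to enforce the Neumann conditions. Finally, the identification with $\mathbb{T}^N$ follows as in \Cref{th_hol_tilde}. The resolvents of $A_N$ and $\widetilde A_N$ agree on a dense set, namely functions whose derivative lies in $Y$, because both solve the same ODE with Neumann conditions. \Cref{Sob_Berg} and \Cref{cor_lem_intors} give $\widetilde X_N\subset L^2[0,\pi]$ continuously, so \cite[Proposition 4.4]{EN06} shows that $\widetilde{\mathbb T}^N$ is the restriction of $\mathbb{T}^N$.
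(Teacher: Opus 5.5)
\textbf{There is a genuine gap: the uniform resolvent estimate, the heart of the theorem, is never proved.}

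Your resolvent formula is correct and gives the same function as the paper's $\psi_\lambda$. You set $f=\lambda^{-1}(h+g')$ with $g=(\lambda\mathbb{I}-\widetilde A_D)^{-1}h'$. The paper fixes the primitive of $g$ through $\lambda\int_0^\pi\psi_\lambda=\int_0^\pi h$, and your $f$ satisfies this because $\int_0^\pi g'=g(\pi)-g(0)=0$. Your remark that constants make $0$ an eigenvalue of $\widetilde A_N$ is also correct. Only a bound $C/|\lambda|$ is possible, so the factors $1+|\lambda|$ in \eqref{618} and \eqref{aproape_finala} can only hold with $|\lambda|$ in their place.

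The missing piece is a uniform bound on $\|g'\|_{A^2_\delta(D_\Theta)}$, which you flag yourself as the main obstacle. The route you propose for it fails. \Cref{cor_lem_intors} gives $A^2_\delta(D_\Theta)\subset W^{-1,2}(0,\pi)$, which for $h\in\widetilde X_N$ only yields $h\in L^2[0,\pi]$. In fact $A^{1,2}_\delta(D_\Theta)$ is not even contained in $C[0,\pi]$. The function $h(s)=s^{-\delta/4}$ belongs to it, since near $s=0$ the quantity $|h'(s)|^2\rho_\delta(s)$ behaves like $|s|^{-2+\delta/2}$, which is area integrable, yet $h$ is unbounded at $0$. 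So the boundary values $h(0),h(\pi)$ that your integration by parts needs are not defined, let alone controlled. The $\cosh$-kernel Schur estimates would then have to be done from scratch. Similarly, $g\in\mathcal{D}(\widetilde A_D)$ only gives $g'\in L^2(0,\pi)$, not $g'\in W^{1,2}(0,\pi)$.

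\textbf{The missing observation is that no kernel estimate is needed.} The paper never estimates $g'$ directly. It uses the equivalent norm $\|u'\|_{A^2_\delta(D_\Theta)}+\|u\|_{L^2[0,\pi]}$ on $A^{1,2}_\delta(D_\Theta)$ from \Cref{rem_fonala_foarte}. It bounds $\|f'\|_{A^2_\delta}=\|g\|_{A^2_\delta}$ by \eqref{estsec_tilde}. It bounds $\|f\|_{L^2[0,\pi]}$ using $f|_{(0,\pi)}=(\lambda\mathbb{I}-A_N)^{-1}h$, the fact that $A_N$ is self-adjoint and nonpositive on $L^2[0,\pi]$, and \Cref{de_combinat}.

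There is an even shorter fix inside your own scheme. The mean-value Poincar\'e inequality you anticipated is exactly \Cref{prop:poincare-Qtheta}. Apply it to $u=g'$ itself rather than using it only for membership: $g'$ lies in $A^{1,2}_\delta(D_\Theta)$ by \Cref{prop:embedding-DTheta}, since $g''=\lambda g-h'\in A^2_\delta(D_\Theta)$, and $g'$ has zero mean on $(0,\pi)$. This gives $\|g'\|_{A^2_\delta}\leqslant C\|\lambda g-h'\|_{A^2_\delta}\leqslant C(M+1)\|h'\|_{A^2_\delta}$ uniformly on $\Sigma_\varphi$. Hence $\|(\lambda\mathbb{I}-\widetilde A_N)^{-1}\|_{\mathcal{L}(\widetilde X_N)}\leqslant C'/|\lambda|$, with no interpolation inequality and no explicit kernels.
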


The proof of the above result requires some preparation.

 \begin{proposition}\label{de_combinat}
Let $\delta\in (0,1)$ and $\Theta\in \left(0,\frac{\pi}{4}\right)$. Then the restriction to $(0,\pi)$ of any function in $A_\delta^{1,2}(D_\Theta)$ lies in $L^2[0,\pi]$. Moreover,
 there exists a constant $K>0$ depending only on $\Theta$ and on $\delta$ such that
 \begin{equation}\label{estimare_mare}
     \|f\|_{L^2[0,\pi]} \leqslant K \|f\|_{A_\delta^{1,2}(D_\Theta)} \qquad\qquad(f\in A_\delta^{1,2}(D_\Theta)). 
\end{equation}
 \end{proposition}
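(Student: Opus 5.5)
The plan is to combine the weighted trace estimate of \Cref{lem_intoarsa}, applied to $f'$, with a one–dimensional Hardy-type argument that integrates $f'$ along $(0,\pi)$. Let $f\in A_\delta^{1,2}(D_\Theta)$ and set $g=f'\in A_\delta^2(D_\Theta)$. By \Cref{lem_intoarsa},
\[
\int_0^\pi t^{1+\delta}(\pi-t)^{1+\delta}|g(t)|^2\,{\rm d}t\leqslant C_{\Theta,\delta}\|f\|^2_{A^{1,2}_\delta(D_\Theta)}.
\]
Since $\delta<1$, we have $1+\delta<2$. A weight $t^{1+\delta}$ near $0$ is exactly what allows one to control $\int_0^{\pi/2}|F(t)|^2\,{\rm d}t$ by a Hardy inequality, for $F$ an antiderivative of $g$ normalized at an interior point. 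Note that the exponent $a=1+\delta$ must satisfy $a<1$ for the Hardy inequality with normalization at the interior point, so this needs care (see below).

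First I control $f$ at an interior point. Pick $x_0=\pi/2$. Since $f$ is holomorphic on $D_\Theta$, the sub-mean value property on a disc $B(\pi/2,r)\subset D_\Theta$, on which $\rho_\delta$ is bounded below, gives
\[
|f(\pi/2)|\leqslant C\|f\|_{A^2_\delta(D_\Theta)}.
\]
Then, for $x\in(0,\pi/2]$, write $f(x)=f(\pi/2)-\int_x^{\pi/2}g(t)\,{\rm d}t$, and similarly on $[\pi/2,\pi)$. It remains to bound $\int_0^{\pi/2}\bigl|\int_x^{\pi/2}g\bigr|^2\,{\rm d}x$. By Cauchy--Schwarz,
\[
\Bigl|\int_x^{\pi/2}g\Bigr|^2\leqslant \int_x^{\pi/2}t^{-a}\,{\rm d}t\int_x^{\pi/2}t^{a}|g|^2\,{\rm d}t,
\]
which for $a=1+\delta>1$ is of order $x^{1-a}=x^{-\delta}$. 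This is integrable on $(0,\pi/2)$ because $\delta<1$, so
\[
\int_0^{\pi/2}\Bigl|\int_x^{\pi/2}g\Bigr|^2{\rm d}x\leqslant C\int_0^{\pi/2}x^{-\delta}\,{\rm d}x\int_0^{\pi/2}t^{1+\delta}|g(t)|^2\,{\rm d}t .
\]
Here the factor $(\pi-t)^{1+\delta}$ is bounded below on $(0,\pi/2)$. This already yields the bound without a sharp Hardy inequality; the crude Cauchy--Schwarz estimate suffices precisely because $\delta<1$. The interval $(\pi/2,\pi)$ is handled symmetrically.

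Summing the two halves gives $\|f\|_{L^2[0,\pi]}\leqslant K\|f\|_{A^{1,2}_\delta(D_\Theta)}$, with $K$ depending only on $\Theta$ and $\delta$ through $C_{\Theta,\delta}$ and the pointwise evaluation constant. I expect no serious obstacle. The only delicate points are two. First, the pointwise bound at $\pi/2$, which requires checking that a disc around $\pi/2$ of radius comparable to $\frac{\pi}{2}\tan\Theta$ lies in $D_\Theta$. Second, the exponent bookkeeping showing that $x^{-\delta}$ is integrable, which is where the restriction $\delta\in(0,1)$ enters.
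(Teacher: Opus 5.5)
Your argument is correct, but it takes a different route from the paper's.

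\textbf{The paper's route.} It applies \Cref{lem_intoarsa} to both $f$ and $f'$. This controls $\int_0^\pi x^{1+\delta}(\pi-x)^{1+\delta}\bigl(|f|^2+|f'|^2\bigr)\,{\rm d}x$. It then invokes the one-dimensional weighted Hardy-type inequality of \Cref{lema_noua}, whose proof is omitted and only referred to Opic--Kufner and Kufner--Persson.

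\textbf{Your route.} You apply \Cref{lem_intoarsa} only to $f'$. You fix the constant of integration by a sub-mean-value bound for $|f(\pi/2)|$ on a disc centred at $\pi/2$, on which $\rho_\delta$ is bounded below; a radius $\frac{\pi}{2}\sin\Theta$ works, which is the inradius of $D_\Theta$. You then replace the Hardy inequality by a direct Cauchy--Schwarz estimate, using $\int_x^{\pi/2}t^{-1-\delta}\,{\rm d}t\leqslant x^{-\delta}/\delta$ and the integrability of $x^{-\delta}$ on $(0,\pi/2)$ for $\delta<1$.

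\textbf{What each buys.} Your version is fully self-contained and shows exactly where $\delta<1$ is used. The paper's version isolates a purely real-variable statement about traces on $(0,\pi)$. There the constant of integration is controlled by the weighted $L^2$ norm of $f$ on the segment, rather than by a point evaluation that relies on holomorphy. Your Cauchy--Schwarz computation, with the base point averaged over $(\pi/4,3\pi/4)$, would in fact also prove the omitted \Cref{lema_noua}.

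\textbf{A side remark to correct.} In your write-up you say the exponent must satisfy $a<1$ for the Hardy inequality with normalization at the interior point. This is backwards. For $F(x)=\int_x^{\pi/2}g$, the inequality $\int_0^{\pi/2}|F|^2x^{a-2}\,{\rm d}x\leqslant C\int_0^{\pi/2}|g|^2x^{a}\,{\rm d}x$ holds precisely for $a>1$; it is normalization at $0$ that requires $a<1$. Since $x^{a-2}=x^{\delta-1}$ is bounded below on $(0,\pi/2)$, the sharp Hardy inequality with $a=1+\delta$ would also have given the result directly. Your proof never uses this remark, so it is unaffected.
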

 
The proof of the above result can be easily completed using \Cref{lem_intoarsa} and the following elementary result:

\begin{lemma}\label{lema_noua}
Let $0<\delta<1$. Then there exists a constant $C_\delta>0$ such that
\[
 \int_0^\pi |f(x)|^2\, {\rm d}x
 \le C_\delta \int_0^\pi x^{1+\delta}(\pi-x)^{1+\delta}\bigl(|f(x)|^2+|f'(x)|^2\bigr)\, {\rm d}x
\]
for every differentiable function $f:(0,\pi)\to\mathbb C$ for which the integral on the right-hand side is finite.
\end{lemma}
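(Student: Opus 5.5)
By symmetry under $x\mapsto \pi-x$, I will split $(0,\pi)$ at $\pi/2$ and treat only $(0,\pi/2]$. There the weight $x^{1+\delta}(\pi-x)^{1+\delta}$ is comparable to $x^{1+\delta}$, and on $[\pi/4,\pi/2]$ it is bounded below by a positive constant.

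The estimate follows from a one-dimensional weighted Hardy-type inequality. I will use the representation
\[
f(x)=f(y)-\int_x^y f'(t)\,{\rm d}t \qquad (0<x\leqslant y),
\]
valid for differentiable $f$ with locally integrable derivative; I will assume $f\in W^{1,1}_{\rm loc}$, which holds in the application where $f$ is real analytic.

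\textbf{Step 1: bound on $[\pi/4,\pi/2]$.} On this interval the weight is bounded below, so
\[
\int_{\pi/4}^{\pi/2}|f|^2\,{\rm d}x\leqslant C\int_0^\pi x^{1+\delta}(\pi-x)^{1+\delta}|f|^2\,{\rm d}x .
\]

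\textbf{Step 2: pointwise bound near $0$.} Averaging the representation over $y\in[\pi/4,\pi/2]$, for $x\in(0,\pi/4)$ I get
\[
|f(x)|\leqslant C\Bigl(\int_{\pi/4}^{\pi/2}|f|^2\Bigr)^{1/2}+\int_x^{\pi/2}|f'(t)|\,{\rm d}t .
\]
The first term is bounded by Step 1. For the second, Cauchy--Schwarz with the weight $t^{1+\delta}$ gives
\[
\int_x^{\pi/2}|f'|\,{\rm d}t\leqslant \Bigl(\int_x^{\pi/2}t^{-1-\delta}\,{\rm d}t\Bigr)^{1/2}\Bigl(\int_0^{\pi/2}t^{1+\delta}|f'|^2\,{\rm d}t\Bigr)^{1/2}\leqslant C x^{-\delta/2}\,\|t^{\frac{1+\delta}{2}}f'\|_{L^2(0,\pi/2)} .
\]

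\textbf{Step 3: integrate.} Squaring and integrating over $(0,\pi/4)$ requires $x^{-\delta}\in L^1(0,\pi/4)$, which holds because $\delta<1$. This is exactly where the hypothesis $\delta<1$ is used, and it fails at $\delta=1$, where $x^{-1}$ is not integrable. The point $\pi$ is handled symmetrically, which completes the proof of the lemma.

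\textbf{Application to \Cref{de_combinat}.} I will apply \Cref{lem_intoarsa} to both $f$ and $f'$, which lie in $A^2_\delta(D_\Theta)$ by the definition \eqref{Sob_Berg}. This bounds the right-hand side of the lemma by $C\|f\|^2_{A^{1,2}_\delta(D_\Theta)}$, and \eqref{estimare_mare} follows.

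I expect no real obstacle. The only choice of substance is pairing the Cauchy--Schwarz step with the weight $t^{-1-\delta}$, so that the resulting singularity $x^{-\delta}$ remains integrable; the other points need only routine care.
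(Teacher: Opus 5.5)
Your argument is correct. It differs from the paper mainly in being self-contained. The paper also splits at the endpoints, but then simply invokes the standard weighted Hardy inequality with weight $x^{\alpha}$, $\alpha=1+\delta$, citing Opic--Kufner and Kufner--Persson. You instead prove what is needed directly, from the pointwise bound $|f(x)|\leqslant C\bigl(\|f\|_{L^2(\pi/4,\pi/2)}+x^{-\delta/2}\|t^{\frac{1+\delta}{2}}f'\|_{L^2(0,\pi/2)}\bigr)$ and the integrability of $x^{-\delta}$.

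The sharp Hardy inequality $\int_0^{a}x^{\alpha-2}\bigl|\int_x^{a}f'(t)\,{\rm d}t\bigr|^2{\rm d}x\leqslant \frac{4}{(\alpha-1)^2}\int_0^{a}x^{\alpha}|f'|^2\,{\rm d}x$ buys more. It controls $f$ in $L^2$ with the singular weight $x^{\delta-1}$ near $0$, and it still works at $\delta=1$, where the lemma remains true. So the breakdown you point out at $\delta=1$ is a feature of your pointwise bound, not of the statement. Your route, on the other hand, needs nothing beyond the fundamental theorem of calculus and Cauchy--Schwarz, and it gives up only what the paper never uses.

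One small point: you do not need the extra assumption $f\in W^{1,1}_{\rm loc}$. Finiteness of the right-hand side gives $f'\in L^2_{\rm loc}(0,\pi)\subset L^1_{\rm loc}(0,\pi)$. A function that is differentiable at every point with locally integrable derivative satisfies the fundamental theorem of calculus on compact subintervals (apply this to the real and imaginary parts). Hence your proof covers the lemma exactly as stated.

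Your application to \Cref{de_combinat}, using \Cref{lem_intoarsa} applied to $f$ and $f'$, is exactly what the paper intends.
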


\begin{proof}
Set $\alpha=1+\delta\in(1,2)$. The claim follows by combining the standard
weighted Hardy estimate on $(0,\pi/2)$ with weight $x^\alpha$ and the analogous
estimate near $\pi$, obtained by the change of variables $y=\pi-x$.
Since this reduction is straightforward, we omit the proof; see
Opic--Kufner \cite{OpicKufner1990} and Kufner--Persson \cite[Section~5.1]{KufnerPersson2003}.
\end{proof}

An important ingredient of the proof of \Cref{th_hol_tilde_neu} is:

\begin{proposition}\label{dens_meu_prop}
The space $\mathcal{D}(\widetilde A_N)$ defined in \eqref{def_dom_A_neu} is dense in $\tilde X_N$.
\end{proposition}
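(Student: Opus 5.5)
We prove density of $\mathcal{D}(\widetilde A_N)$ in $\widetilde X_N=A_\delta^{1,2}(D_\Theta)$ by reducing it to the Dirichlet density result, \Cref{cor_imp}, through differentiation. The map $f\mapsto f'$ sends $A_\delta^{1,2}(D_\Theta)$ into $A_\delta^2(D_\Theta)$, and Neumann conditions for $f$ become Dirichlet conditions for $f'$. We first recall the Dirichlet dense class
\[
Y=\{g\in {\rm Hol}(D_\Theta)\cap C(\overline{D_\Theta})\ |\ g(0)=g(\pi)=0,\ g''\in A^2(D_\Theta)\}.
\]
Since $\rho_\delta$ is bounded on $D_\Theta$, we have $A^2(D_\Theta)\subset A^2_\delta(D_\Theta)$ continuously, so $Y\subset \widetilde X_D=A_\delta^2(D_\Theta)$ is dense.

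Fix $f\in \widetilde X_N$ and $\varepsilon>0$. Since $f'\in A_\delta^2(D_\Theta)$, \Cref{cor_imp} gives $g\in Y$ with $\|g-f'\|_{A_\delta^2(D_\Theta)}<\varepsilon$. We then set
\[
h(s)=c+\int_0^s g(w)\,{\rm d}w,
\]
with the integral taken along the segment $[0,s]$, which makes sense because $D_\Theta$ is convex and star-shaped from $0$. We choose the constant $c$ so that $h$ is close to $f$ in $A_\delta^2(D_\Theta)$. By construction:
\begin{enumerate}
\item $h'=g$, so $h'(0)=h'(\pi)=0$;
\item $h''=g'$ and $h'''=g''\in A^2(D_\Theta)\subset A^2_\delta(D_\Theta)$;
\item $h$ is bounded, since $g$ is continuous on $\overline{D_\Theta}$.
\end{enumerate}
To conclude $h\in\mathcal D(\widetilde A_N)$ we still need $h''=g'\in A_\delta^2(D_\Theta)$. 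This needs a Poincaré/Hardy type inequality on the rhombus: if $u''\in A^2(D_\Theta)$ and $u$ is continuous on $\overline{D_\Theta}$, then $u'\in A^2_\delta(D_\Theta)$. We would prove it by writing $u'(s)=u'(s_0)+\int_{s_0}^s u''$ for a fixed interior point $s_0$, and using that Bergman functions satisfy the pointwise bound $|u''(w)|\lesssim {\rm dist}(w,\partial D_\Theta)^{-1}\|u''\|$. The resulting integral grows at most logarithmically near the boundary and is square integrable on the bounded domain. The paper refers to Appendix II for facts of this type, and we expect to rely on one of them.

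The remaining step, which we expect to be the main obstacle, is the approximation $\|h-f\|_{A_\delta^{1,2}}\lesssim\varepsilon$. The derivative part is $\|h'-f'\|=\|g-f'\|<\varepsilon$. For the zeroth-order part we need a weighted Poincaré inequality on $D_\Theta$:
\[
\inf_{c\in\mathbb C}\|F-c\|_{A^2_\delta(D_\Theta)}\leqslant C\|F'\|_{A^2_\delta(D_\Theta)}\qquad (F\in A_\delta^{1,2}(D_\Theta)).
\]
We would apply it to $F=h-f$ and take $c$ optimal. To prove the inequality we would argue by compactness. The embedding of $A^{1,2}_\delta(D_\Theta)$ into $A^2_\delta(D_\Theta)$ is compact, because on compact subsets we have locally uniform convergence, and near the vertices the weighted integrability of $F'$ controls $F$ via the same Hardy estimate used in \Cref{lema_noua}, applied along rays from each vertex. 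Functions with $F'=0$ are constants. The standard contradiction argument then yields the inequality.

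Finally, the approximant has the form $h=f+(h-f)$ with $\|h-f\|_{A^{1,2}_\delta}\leqslant (1+C)\varepsilon$ and $h\in\mathcal D(\widetilde A_N)$, which proves density. If the Poincaré step proves delicate near the obtuse vertices $w_2$ and $w_4$, where the weight does not vanish, we would instead treat those regions with the unweighted Bergman Poincaré inequality on convex domains, and keep the Hardy argument only near $0$ and $\pi$.
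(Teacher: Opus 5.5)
Your proposal is correct and has the same skeleton as the paper's proof. You approximate $f'$ in $A^2_\delta(D_\Theta)$ by $g\in Y$ via \Cref{cor_imp}, take a primitive (which then satisfies the Neumann conditions), check the extra regularity, and fix the additive constant with a Poincar\'e-type inequality. The differences lie in the auxiliary lemmas.

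\emph{Regularity of the primitive.} The paper obtains $g'\in A^2_\delta(D_\Theta)$ and $f_k\in A^2_\delta(D_\Theta)$ from \Cref{prop:embedding-DTheta}. You use the pointwise Bergman bound, integrated along the segment $[s_0,s]$. The logarithmic growth you claim follows from concavity of the distance function on the convex set $D_\Theta$, and this actually gives $g'\in A^2(D_\Theta)$. You get $h\in A^2_\delta(D_\Theta)$ simply from the boundedness of $g$ on $\overline{D_\Theta}$.

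\emph{The constant and the Poincar\'e inequality.} This is the more significant difference. The paper normalizes by $\int_0^\pi f_k=\int_0^\pi f$ and invokes \Cref{prop:poincare-Qtheta}. That inequality is proved by reducing to the trace on $(0,\pi)$ via \Cref{lem_intoarsa} and then applying the one-dimensional weighted Poincar\'e inequality of Chua--Wheeden. You instead take the $A^2_\delta$-optimal constant and prove a Poincar\'e inequality modulo constants by a Rellich-type compactness argument.

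\emph{Comparison.} Your route stays intrinsic to the rhombus and avoids trace estimates and the Chua--Wheeden machinery. However, it is non-constructive, and it rests on the compactness of the embedding of $A^{1,2}_\delta(D_\Theta)$ into $A^2_\delta(D_\Theta)$. For that you need a uniform tail estimate along the whole boundary, not only near $0$ and $\pi$: unweighted Rellich-type control near the edges and the obtuse vertices, and the weighted Hardy estimate along rays near $0$ and $\pi$.

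The paper's trace-normalized inequality is the stronger statement: it implies yours with $c=\frac{1}{\pi}\int_0^\pi F$. It is also what yields \Cref{rem_fonala_foarte}, which is reused in the Neumann resolvent estimate and in \Cref{expint_neu}. So, given the appendix, you could cite \Cref{prop:poincare-Qtheta} directly and skip the compactness step.
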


\begin{proof}
Let $Y$ be the space introduced in \Cref{cor_imp}, which means that
 $Y$ is the vector space formed of all the functions $g$ which are holomorphic in $D_\Theta$ and
 continuous on $\overline{D_\Theta}$, with $\frac{{\rm d}^2 g}{{\rm d}s^2}\in A^2(D_\Theta)$ and 
 $g(0)=g(\pi)=0$.

According to \Cref{cor_imp}, for every $f\in \tilde X_N$ there exists a sequence $ (g_k)\subset Y$ such that
\begin{equation}\label{prima_convergenta}
\left\|f'- g_k\right\|_{A_\delta^2(D_\Theta)}\to 0.
\end{equation}
For each $k\in \mathbb{N}$ we denote by $f_k$ the unique function holomorphic on $D_\Theta$ satisfying
\begin{equation}\label{DEFK}
f_k'(s)=  g_k(s) \qquad\qquad(s\in D_\Theta),
\end{equation}
\begin{equation}\label{cu_integrala}
\int_0^\pi f_k(x)\, {\rm d}x=\int_0^\pi f(x)\, {\rm d}x.
\end{equation}
Since $g_k\in A_\delta^2(D_\Theta)$ for every $k\in \mathbb{N}$, using \eqref{DEFK} and \Cref{prop:embedding-DTheta} from Appendix II it follows that 
\begin{equation}\label{este_in}
f_k \in \tilde X_N \qquad\qquad(k\in \mathbb{N}).    
\end{equation}
On the other hand,  from $g_k\in Y$ we have
$g_k''\in A_\delta^2(D_\Theta)$ so that, applying again \Cref{prop:embedding-DTheta} it follows that $g_k'\in A_\delta^2(D_\Theta)$  
Moreover, from \eqref{DEFK} it follows that $f_k''=g_k'$ on $D_\Theta$ for all $k\in \mathbb{N}$, so that.
\begin{equation}\label{este_in_bis}
f_k'' \in \tilde X_N \qquad\qquad(k\in \mathbb{N}).    
\end{equation}
We also note that \eqref{DEFK} implies that
\begin{equation}\label{este_in_margine}
f_k'(0)=f_k'(\pi)=0 \qquad\qquad(k\in \mathbb{N}).    
\end{equation}
Putting together \eqref{este_in}-\eqref{este_in_margine} it follows that
\begin{equation}\label{este_in_domeniu}
f_k \in \mathcal{D}(\tilde A_N) \qquad\qquad(k\in \mathbb{N}).    
\end{equation}
Finally, putting together \eqref{prima_convergenta}, \eqref{cu_integrala} and \Cref{prop:poincare-Qtheta}
it follows that
\[
\|f_k-f\|_{A_\delta^{1,2}(D_\Theta)} \to 0,
\]
which ends the proof of the claimed density property.
\end{proof}

We are now ready to prove \Cref{th_hol_tilde_neu}.

\begin{proof}[Proof of \Cref{th_hol_tilde_neu}]
Let $\Sigma_\varphi$ be the subset of the complex plane introduced in \eqref{def_sector}. We know from the proof of \Cref{th_hol_tilde} that 
for every $\varphi\in\left(0,\frac{\pi}{2}-2\Theta\right)$ we have that
$\Sigma_\varphi\subset \rho(\tilde A_D)$ and
that there exists $M>0$ with
\begin{equation}\label{est_rez_dir}
      \left\|(\lambda I-\widetilde A_D)^{-1}\right\|_{\mathcal{L}(A_\delta^2(D_\Theta))}\leqslant \frac{M}{1+|\lambda|}
      \qquad\qquad(\lambda\in \Sigma_\varphi).
  \end{equation}
  Given $f\in \tilde X_N$ and $\lambda\in \Sigma_\varphi$, we note that, by \eqref{est_rez_dir},  any function $\psi_\lambda$ 
  with
  \begin{equation}\label{der_e_buna}
   \psi_\lambda'(s)=\left[(\lambda I-\widetilde A_D)^{-1} f'\right](s) \qquad\qquad(s\in D_\Theta).   
  \end{equation}
satisfies
\begin{equation}\label{este_der}
 \|\psi_\lambda'\|_{A_\delta^2(D_\Theta)}\leqslant   \frac{M}{1+|\lambda|} \|f'\|_{A_\delta^2(D_\Theta)}
      \qquad\qquad(f\in A^{1,2}_\delta(D_\Theta),\ \lambda\in \Sigma_\varphi).  
\end{equation}
Moreover, we know from \Cref{prop:embedding-DTheta} that every function $\psi_\lambda$ satisfying \eqref{der_e_buna} lies in $A^2_\delta(D_\Theta)$.  
We next select $\psi_\lambda=\psi_\lambda(f)$ such that
\begin{equation}\label{int_e_buna}
  \lambda \int_0^\pi \psi_\lambda(x,0)\, {\rm d}x = \int_0 ^\pi f(x,0)\, {\rm d}x.    
  \end{equation}
Note that the right hand side of the above formula makes sense since, due to \Cref{de_combinat}, both $\psi_\lambda(\cdot,0)$ and $f$
are in $L^2[0,\pi]$.

On the other hand, from \eqref{der_e_buna} it follows that
\begin{equation}\label{ec_pisi_l}
\lambda \frac{\partial\psi_\lambda}{\partial x}(x,0)-\frac{\partial^3\psi_\lambda}{\partial x^3}(x,0)=\frac{\partial f}{\partial x}(x,0) \qquad\qquad (\lambda\in \Sigma_\varphi, x\in (0,\pi)) ,   
\end{equation}
\begin{equation}\label{cl_pisi_l}
\frac{\partial \psi_\lambda}{\partial x}(0,0)=\frac{\partial \psi_\lambda}{\partial x}(0,\pi)=0 \qquad\qquad (\lambda\in \Sigma_\varphi).    
\end{equation}
From \eqref{ec_pisi_l}  it follows that for every $\lambda\in \Sigma_\varphi$ there exists $c_\lambda\in \mathbb{C}$ 
\begin{equation}\label{holds_with}
\lambda \psi_\lambda(x,0)- \frac{\partial^2\psi_\lambda}{\partial x^2}(x,0)=f(x,0)+c_\lambda \qquad\qquad ( x\in (0,\pi)) .   
\end{equation}
Integrating the above formula on $[0,\pi]$ and using \eqref{int_e_buna}, \eqref{cl_pisi_l} it follows that $c_\lambda =0$.
We can thus use \eqref{cl_pisi_l} to obtain that
\begin{equation}\label{618}
\|\psi_\lambda(\cdot,0)\|_{L^2[0,\pi]} \leqslant \frac{K}{1+|\lambda|} \|f(\cdot,0)\|_{L^2[0,\pi]}
\qquad\qquad(\lambda\in \Sigma_\varphi, f\in \tilde X_N),
\end{equation}
where $K$ is a constant depending only on $\varphi$. Combining next \eqref{este_der} and \eqref{618} it follows that
\begin{equation*}
\|\psi_\lambda'\|_{A_\delta^2(D_\Theta)}+\|\psi_\lambda(\cdot,0)\|_{L^2[0,\pi]} \leqslant  \frac{M}{1+|\lambda|} \left(\|f'\|_{A_\delta^2(D_\Theta)}+\|f(\cdot,0)\|_{L^2[0,\pi]}\right),
\end{equation*}
where $K$ is a constant depending only on $\varphi$.
Combining the above estimate with \eqref{618} and \Cref{rem_fonala_foarte} it follows that
\begin{equation}\label{aproape_finala}
\|\psi_\lambda(f)\|_{A_\delta^{1,2}(D_\Theta)} \leqslant  \frac{M}{1+|\lambda|} \|f\|_{A_\delta^{1,2}(D_\Theta)} \qquad\qquad(f\in A_\delta^{1,2}(D_\Theta)).
\end{equation}
Moreover, using the fact that \eqref{ec_pisi_l} holds with $c_\lambda=0$, together with \eqref{cl_pisi_l} and the analiticity 
on $D_\Theta$ of $\psi_\lambda(f)$ and $f$, implies that $\Sigma_\varphi\subset \rho(\tilde A_N)$, $\psi_\lambda\in \mathcal{D}(\tilde A_N)$
and that
\begin{equation}\label{aprticular}
(\lambda \mathbb{I}-\tilde A_N)^{-1} f=\psi_\lambda(f) \qquad\qquad(\lambda\in \Sigma_\varphi,f\in \tilde X_N).
\end{equation}
Thus, using \eqref{aproape_finala}, it follows that 
\begin{equation*}
\|(\lambda\mathbb{I}-\tilde A_N)^{-1}\|_{\mathcal{L}(A_\delta^{1,2}(D_\Theta))} \leqslant  
\frac{M}{1+|\lambda|}  \qquad\qquad(\lambda\in  \Sigma_\varphi),
\end{equation*}
so that the operator $\tilde A_N$ is sectorial. Combining this fact
and \Cref{dens_meu_prop} implies that $\tilde A_N$ generates an analytic semigroup on $\tilde X_N$.

Moreover, we have seen in  \Cref{de_combinat} that $\tilde X_N= A_{\delta}^{1,2}(D_\Theta)$ is a subspace of $X_N=L^{2}[0,\pi]$. Let
 $\mathbb{T}^N$ be the analytic semigroup on $X_N$ generated by the Neumann Laplacian (i.e., the operator $A_N$
 introduced in \Cref{ex_heat_det_neu}). It is clear from \eqref{aprticular}, \eqref{cl_pisi_l} and the fact that \eqref{holds_with} holds with $c_\lambda=0$ that 
\begin{equation*}
 \left[(\lambda\mathbb{I}-A_N)^{-1} f\right](x) = \left[(\lambda\mathbb{I}-\tilde A_N)^{-1} f\right](x)
 \quad\qquad\left(f\in \tilde X_N,x\in (0,\pi)\right).
 \end{equation*}
 Using again  \cite[Proposition 4.4]{EN06} we deduce that  $\tilde{\mathbb{T}}^N$ is indeed
 the restriction of $\mathbb{T}^N$ to $\tilde X_N$.

\end{proof}

We next give the analogue of \Cref{expint} in the case of Neumann boundary conditions, which states as follows:

\begin{proposition}\label{expint_neu}
Let $\tau>0$, $\delta\in (0,1)$ and $0<\Theta<\frac{\pi}{4}$. 
Let $\left(\Phi_{\tau}^{N}\right)_{\tau\geqslant 0}$  be the input maps defined by
\begin{equation}\label{def_input_neu}
\Phi_{\tau}^{N} u = z(\tau,\cdot) \qquad\qquad(\tau\geqslant 0,\ u\in L^2([0,\infty);\mathbb{C}^2)),
\end{equation}
where $z$ satisfies \eqref{PROBHEAT1_neu}.
Then for every $\gamma \in \left[0,\frac{\delta}{4}\right)$ the input map $\Phi_\tau^N$defined in \eqref{def_input_neu} satisfies
\begin{equation}\label{conclu_N}
\sum_{n\in \mathbb{Z}} \left\|\Phi_\tau^N \begin{bmatrix}{\rm e}_{n,\tau,\gamma}\\ 
0\end{bmatrix}\right\|^2_{A^{1,2}_\delta(D_\Theta)}
+  \sum_{n\in \mathbb{Z}} \left\|\Phi_\tau^N\begin{bmatrix} 0 \\ 
{\rm e}_{n,\tau,\gamma}\end{bmatrix}\right\|^2_{A^{1,2}_\delta(D_\Theta)} < \infty ,
\end{equation}
where the functions ${\rm e}_{n,\tau,\gamma}$ have been defined in \eqref{dexp_timp}.
In particular, $\Phi_\tau^N$ is a Hilbert-Schmidt operator from $L^2\left([0,\tau];\mathbb{C}^2\right)$ to $A^{1,2}_\delta(D_\Theta)$.
\end{proposition}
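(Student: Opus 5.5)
The plan is to reduce the Neumann statement to the Dirichlet one, \Cref{expint}, by differentiating in the space variable. If $z$ solves \eqref{PROBHEAT1_neu} with inputs $u_0,u_\pi$, then, at least formally, $w=\partial_x z$ solves the heat equation with Dirichlet data $w(t,0)=u_0(t)$, $w(t,\pi)=u_\pi(t)$ and zero initial datum. This gives the identity
\[
\left(\Phi_\tau^N u\right)' = \Phi_\tau^D u \qquad (u\in L^2([0,\tau];\mathbb{C}^2)).
\]
I would justify it at the level of kernels. The Neumann input map is represented by $\int_0^\tau K^N_0(\tau-\sigma,s)u_0(\sigma)\,{\rm d}\sigma$ plus the analogous term at $\pi$, where $K^N_0$ is the Neumann heat kernel expressed through the even method of images. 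Its $s$-derivative is exactly the Dirichlet kernel $\partial_s K_0$ appearing in \eqref{DEFOP1}, up to sign conventions consistent with \eqref{def_B_primul} and \eqref{def_B_primul_neumann}. As a cross-check, one can verify the identity on smooth compactly supported inputs vanishing near $t=\tau$ and extend by continuity in $W^{-1,2}$. The weighted inputs ${\rm e}_{n,\tau,\gamma}$ lie in $L^2$ because $\gamma<\frac{1}{2}$.

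With this identity, the derivative part of the $A^{1,2}_\delta$ norm in \eqref{conclu_N} equals $\sum_n\|\Phi_\tau^D(\cdot)\|^2_{A^2_\delta(D_\Theta)}$, which is finite by \Cref{expint}.

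It then remains to control $\sum_n \|\Phi_\tau^N {\rm e}_{n,\tau,\gamma}\|^2_{A^2_\delta(D_\Theta)}$ (with ${\rm e}_{n,\tau,\gamma}$ placed in either input slot). I would use the Poincaré-type inequality \Cref{prop:poincare-Qtheta} from Appendix II, the same one used in \Cref{dens_meu_prop}. It bounds $\|f\|_{A^2_\delta}$ by $C\bigl(\|f'\|_{A^2_\delta}+|\int_0^\pi f(x)\,{\rm d}x|\bigr)$, and this is what I will use; I take it to hold in this form since the density proof relies on exactly this control. So I need $\sum_n |\int_0^\pi (\Phi^N_\tau {\rm e}_{n,\tau,\gamma})(x)\,{\rm d}x|^2<\infty$. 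Integrating the equation in $x$ gives conservation of mass:
\[
\frac{{\rm d}}{{\rm d}t}\int_0^\pi z(t,x)\,{\rm d}x = u_\pi(t)-u_0(t).
\]
Hence $\int_0^\pi \Phi_\tau^N u = \int_0^\tau (u_\pi-u_0)\,{\rm d}\sigma$, which is a bounded rank-one functional of $u$. For $u$ equal to ${\rm e}_{n,\tau,\gamma}$ in one slot, these numbers are, up to normalisation, Fourier coefficients of $(\tau-t)^{-\gamma}\in L^2$, so they are square summable by Parseval. Together with the derivative bound this yields \eqref{conclu_N}.

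The main obstacle is the rigorous justification of $(\Phi^N_\tau u)'=\Phi^D_\tau u$ as an identity of holomorphic functions on $D_\Theta$, rather than only on $(0,\pi)$ or in distributions. I would first try termwise differentiation of the image series, which converges locally uniformly on $D_{\frac{\pi}{4}}$ in view of \Cref{ceilalti}; the identity then extends to $D_\Theta$ by uniqueness of analytic continuation. Finally, the Hilbert–Schmidt conclusion follows since $\{{\rm e}_{n,\tau,0}\}$ is, after normalisation, an orthonormal basis of $L^2[0,\tau]$ for $\tau=2\pi$, and the general $\tau$ case follows by rescaling as in \Cref{expint}.
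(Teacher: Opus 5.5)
Your proposal is correct. Its first half is the paper's argument: the paper also uses $(\Phi_\tau^N u)'=\Phi_\tau^D u$, checked on $(0,\pi)$ and extended to $D_{\frac{\pi}{4}}$ by analyticity, and then invokes \Cref{expint} for the derivative part of the norm.

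You differ in how you handle the zero-order part $\sum_n\|\Phi_\tau^N(\cdot)\|^2_{A^2_\delta(D_\Theta)}$.
\begin{itemize}
\item The paper uses the equivalent norm $\|f'\|_{A^2_\delta(D_\Theta)}+\|f\|_{L^2[0,\pi]}$ of \Cref{rem_fonala_foarte}. It then imports from Da Prato--Zabczyk the Hilbert--Schmidt property of the Neumann input map into $L^2[0,\pi]$.
\item You apply \Cref{prop:poincare-Qtheta} directly and compute the mean exactly by conservation of mass. This bounds the zero-order term by the rank-one functional $u\mapsto\int_0^\tau(u_\pi-u_0)\,{\rm d}\sigma$. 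Its values on ${\rm e}_{n,2\pi,\gamma}$ are Fourier coefficients of $(2\pi-t)^{-\gamma}\in L^2[0,2\pi]$, hence square summable.
\end{itemize}

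Your route is self-contained and avoids a subtle point. The paper's citation concerns $\Phi_\tau^N$ itself, but for $\gamma>0$ the sum involves the weighted inputs ${\rm e}_{n,\tau,\gamma}$. That requires the Hilbert--Schmidt property of $L_\tau^N$ into $L^2[0,\pi]$, not just of $\Phi_\tau^N$. This does hold, since $\|B_\Lambda^*(\mathbb{T}^N_t)^*\|^2_{{\rm HS}}=O(t^{-1/2})$ and $2\gamma<\frac12$, but it is an extra estimate. Your argument needs none of it, because the mean is an explicit bounded functional of the input. The paper's route is shorter given the literature, and yields the $L^2[0,\pi]$ bound as a by-product.

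One point to state explicitly: \Cref{prop:poincare-Qtheta} is formulated for $f\in A^{1,2}_\delta(D_\Theta)$. You should note that $\Phi_\tau^N{\rm e}_{n,\tau,\gamma}$ lies there a priori. This follows from ${\rm e}_{n,\tau,\gamma}\in L^2$ and ${\rm Ran}\,\Phi_\tau^N=A^{1,2}(D_{\frac{\pi}{4}})$, or alternatively from \Cref{prop:embedding-DTheta}.
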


\begin{proof} We first recall from \Cref{ex_heat_det_neu}  that  
\begin{equation}\label{reach_heat_1D_M}
{\rm Ran}\, \Phi_\tau^N  = A^{1,2}(D_\frac{\pi}{4}):=\left\{\eta\in A^2\left(D_\frac{\pi}{4}\right)\ \ \left|\ \ \eta'\in A^2\left(D_\frac{\pi}{4}\right)\right.\right\}.
\end{equation}
Moreover, we have that 
\begin{equation}\label{inca_un_label}
\left(\Phi_\tau^N \begin{bmatrix}
    u_0\\ u_\pi
\end{bmatrix} \right)'(s)= \left(\Phi_\tau^D \begin{bmatrix}
    u_0\\ u_\pi
\end{bmatrix} \right)(s) \qquad\qquad(u_0,u_\pi \in L^2[0,\tau],\ s\in D_\frac{\pi}{4}),
\end{equation}
where $\Phi_\tau^D$ is the input map introduced in \eqref{DEFFITAUSIMPLU}. Indeed, the above formula obviously holds on $(0,\pi)$ thus, by analiticity, for every $s\in D_\frac{\pi}{4}$.

Combining \eqref{inca_un_label}  and \Cref{expint} we obtain that
\begin{equation}\label{HS_complex}
\sum_{n\in \mathbb{Z}} \left\|\left(\Phi_\tau^N\begin{bmatrix}{\rm e}_{n,\tau,\gamma}\\ 
0\end{bmatrix}\right)'\right\|^2_{A^2_\delta(D_\Theta)}
+  \sum_{n\in \mathbb{Z}} \left\|\left(\Phi_\tau^N \begin{bmatrix} 0 \\ 
{\rm e}_{n,\tau,\gamma}\end{bmatrix}\right)'\right\|^2_{A^2_\delta(D_\Theta)} < \infty .
\end{equation}
On the other hand, it is known  (see, for instance, \cite{DAP_Z_Main}) that $\Phi_{\tau}^{N}$ is a Hilbert-Schmidt operator from $L^2([0,\infty);\mathbb{C}^2)$ to
$L^2[0,\pi]$ so that 
\begin{equation*}
\sum_{n\in \mathbb{Z}} \left\|{\Phi_\tau^N}\begin{bmatrix}{\rm e}_{n,\tau,\gamma}\\ 
0\end{bmatrix}\right\|^2_{L^2[0,\pi]}
+  \sum_{n\in \mathbb{Z}} \left\|\Phi_\tau^N \begin{bmatrix} 0 \\ 
{\rm e}_{n,\tau,\gamma}\end{bmatrix}\right\|_{L^2[0,\pi]}^2 < \infty .
\end{equation*}
Putting together the above estimate, \eqref{HS_complex} and \Cref{rem_fonala_foarte} we obtain the conclusion \eqref{conclu_N}. 
\end{proof}

\begin{remark}\label{rem_pentru_n}
We note that if either  $\delta=0$ and $\Theta\in \left(0,\frac{\pi}{4}\right)$
or if $\delta\in (0,1)$ and $\Theta=\frac{\pi}{4}$ then the input map $\Phi_\tau^N$  is not
Hilbert-Schmidt  from $L^2\left([0,\tau];\mathbb{C}^2\right)$ into $A^{1,2}_\delta(D_\Theta)$.    
Indeed, by combining \Cref{prop_sharp} and \eqref{inca_un_label} it follows that, under each of the above assumptions on $\delta$ and $\Theta$ we have
\begin{equation*}
\sum_{n\in \mathbb{Z}} \left\|\left(\Phi_\tau^N\begin{bmatrix}{\rm e}_{n,\tau,0}\\ 
0\end{bmatrix}\right)'\right\|^2_{A^2_\delta(D_\Theta)}
+  \sum_{n\in \mathbb{Z}} \left\|\left(\Phi_\tau^N \begin{bmatrix} 0 \\ 
{\rm e}_{n,\tau,0}\end{bmatrix}\right)'\right\|^2_{A^2_\delta(D_\Theta)} = + \infty .
\end{equation*}
\end{remark}

We are now in a position to give the proof announced in the title of the section.

\begin{proof}[Proof of \Cref{th_main_neu}]
As we  have seen in \Cref{rem_neu_stoch}, the system  \eqref{ec_stoh_abs}, \eqref{init_stoh_abs} has a unique mild solution
defined by \eqref{duh_stoh}, with $A=A_N$ and $B$ given by \eqref{def_B_primul_neumann}. Using \Cref{th_hol_tilde_neu}  it follows that the restriction to $\tilde X_N$ of the semigroup $\mathbb{T}^N$, denoted by $\widetilde{\mathbb{T}}^N$, is an analytic semigroup on $\tilde X_N$. Moreover, we have seen in \Cref{expint_neu} that
the corresponding input maps satisfy the conclusion of \Cref{expint_neu}.  By applying
\Cref{prop_reg_stoch} we can now conclude that $\psi$ has a version continuous in time with values in $A_\delta^{1,2}\left(D_\Theta\right)$.

Finally, by combining \Cref{th_reg_min} and \Cref{rem_pentru_n} it follows that 
for \ $\Theta\in \left(0,\frac{\pi}{4}\right)$ \ the solution $\psi$
does not generally take values in $A^{1,2}(D_\Theta)$ 
  and for $\delta\in (0,1)$ it does not take values in $A_\delta^{1,2}(D_\frac{\pi}{4})$.
\end{proof}

\section{Appendix I: A weighted Bergman space observation on a rhombus}\label{appendixI}

This Appendix is devoted to the proof of \Cref{est_princ_princ}. Given $\Theta\in\left(0,\frac{\pi}{4}\right)$ and $\varphi\in\left(0,\frac{\pi}{2}-2\Theta\right)$, we recall that the rhombus $D_\Theta$ has been defined in \eqref{rtheta} and the sector $\Sigma_\varphi$ has been introduced in \eqref{def_sector}. The following results describe some properties of the elements of $\Sigma_\varphi$ which will be useful in the proof of \Cref{est_princ_princ}. 

\begin{lemma}\label{lem:more}
Let $\lambda=|\lambda|{\rm e}^{i\nu}\in \Sigma_\varphi$ and $\theta\in(-\Theta,\Theta)$. We have that
\begin{enumerate}[label=(\alph*),leftmargin=2em]
\item The function $r\in[0,r_\theta]\to \left| \pi-r{\rm e}^{i\theta}\right|$ is nonincreasing, where $r_\theta=\frac{\pi \sin\Theta}{\sin(\Theta+|\theta|)}$.

\item There exists a constant $c_{\Theta,\varphi}\in\left(0,\frac{\pi}{2}\right)$ such that 
\begin{equation}\label{eq:consig}
\left|\frac{\nu}{2}+\theta\right|\leqslant  \frac{\pi}{2}-c_{\Theta,\varphi},
\end{equation}
and 
\begin{equation}\label{eq:inrel}
\Re \left(\sqrt{\lambda}\left(\pi-r{\rm e}^{i\theta}\right)\right)\geq 0\qquad (r\in [0,r_\theta]),
\end{equation}
where $\sqrt{\lambda}$ denotes the principal branch of the square root function in $\Sigma_\varphi$.
\end{enumerate}
\end{lemma}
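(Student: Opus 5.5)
Both parts are elementary plane geometry on the rhombus $D_\Theta$, so I will handle them by direct computation. I will use $\lambda=|\lambda|{\rm e}^{i\nu}$ with $|\nu|<\frac{\pi}{2}+\varphi$ and $\sqrt\lambda=|\lambda|^{1/2}{\rm e}^{i\nu/2}$.

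For (a), I would compute
\[
\left|\pi-r{\rm e}^{i\theta}\right|^2=\pi^2-2\pi r\cos\theta+r^2,
\]
whose derivative in $r$ is $2(r-\pi\cos\theta)$. It therefore suffices to check that $r_\theta\leqslant \pi\cos\theta$, i.e. $\sin\Theta\leqslant \cos\theta\sin(\Theta+|\theta|)$. The difference can be written as
\[
\cos\theta\sin(\Theta+|\theta|)-\sin\Theta=\sin|\theta|\,\cos(\Theta+|\theta|),
\]
using $\sin\Theta=\sin\bigl((\Theta+|\theta|)-|\theta|\bigr)$. Since $|\theta|<\Theta<\frac{\pi}{4}$, we have $\Theta+|\theta|<\frac{\pi}{2}$, so this quantity is nonnegative. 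Geometrically, the foot of the perpendicular from $\pi$ onto the ray of angle $\theta$ lies beyond the boundary of the rhombus.

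For (b), first I bound the angle directly: $|\frac{\nu}{2}+\theta|<\frac{\pi}{4}+\frac{\varphi}{2}+\Theta$. The hypothesis $\varphi<\frac{\pi}{2}-2\Theta$ gives $\frac{\varphi}{2}+\Theta<\frac{\pi}{4}$, so the bound is strictly less than $\frac{\pi}{2}$. I can then take
\[
c_{\Theta,\varphi}=\frac{\pi}{4}-\frac{\varphi}{2}-\Theta\in\left(0,\frac{\pi}{4}\right).
\]
Then, for \eqref{eq:inrel}, I write $\pi-r{\rm e}^{i\theta}=\rho\,{\rm e}^{-i\alpha}$ with $\rho\geqslant0$. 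The key geometric point is that for $r\in[0,r_\theta]$ the point $r{\rm e}^{i\theta}$ lies in $\overline{D_\Theta}$, i.e. in the sector of opening $\Theta$ at the vertex $\pi$. Hence $\alpha$ has the same sign as $\theta$ (or vanishes) and $|\alpha|\leqslant\Theta$. The bound $|\alpha|\leqslant\Theta$ alone is not enough, since $|\frac{\nu}{2}|+\Theta$ can exceed $\frac{\pi}{2}$; what I need is to control $\frac{\nu}{2}-\alpha$.

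This is where the main care lies: combining the angles with the correct signs. Since $\alpha$ and $\theta$ have the same sign and $|\alpha|\leqslant\Theta$, the quantity $|\frac{\nu}{2}-\alpha|$ is at most $\max\bigl(|\frac{\nu}{2}|,\,|\frac{\nu}{2}|+\Theta\bigr)$ in the worst case, giving the bound $\frac{\pi}{4}+\frac{\varphi}{2}+\Theta<\frac{\pi}{2}$. This is the same bound as in \eqref{eq:consig}, and it holds regardless of the sign pairing. It follows that
\[
\Re\left(\sqrt\lambda\,(\pi-r{\rm e}^{i\theta})\right)=|\lambda|^{1/2}\rho\cos\left(\frac{\nu}{2}-\alpha\right)\geqslant0,
\]
with equality only at $\rho=0$.
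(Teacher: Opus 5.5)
Your proof is correct. Part (a) and the angle bound \eqref{eq:consig}, with the same constant $c_{\Theta,\varphi}=\frac{\pi}{4}-\frac{\varphi}{2}-\Theta$, coincide with the paper's argument; only your proof of \eqref{eq:inrel} takes a different route.

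\emph{The paper's route.} It writes $\Re\bigl(\sqrt{\lambda}(\pi-r{\rm e}^{i\theta})\bigr)=\sqrt{|\lambda|}\bigl(\pi\cos\frac{\nu}{2}-r\cos(\frac{\nu}{2}+\theta)\bigr)$ and uses \eqref{eq:consig} to see that this affine function of $r$ is nonincreasing. This reduces the claim to the endpoint $r=r_\theta$, where a product-to-sum identity gives the value $\frac{\pi\sqrt{|\lambda|}\sin|\theta|}{\sin(\Theta+|\theta|)}\cos\bigl(\frac{\nu}{2}-\mbox{sgn}(\theta)\Theta\bigr)$.

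\emph{Your route.} You use instead that $\overline{D_\Theta}$ lies in the closed sector $\{|\arg(\pi-s)|\leqslant\Theta\}$ at the vertex $\pi$. Hence $\pi-r{\rm e}^{i\theta}=\rho{\rm e}^{-i\alpha}$ with $|\alpha|\leqslant\Theta$, for all $r\in[0,r_\theta]$ at once. The paper's endpoint value is exactly your formula with $\alpha=\mbox{sgn}(\theta)\Theta$ and $\rho=|\pi-r_\theta{\rm e}^{i\theta}|$.

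\emph{Comparison.} Your argument avoids the monotonicity reduction and the trigonometric identity, and it does not use \eqref{eq:consig} at this step. It also gives a quantitative bound, $\Re\bigl(\sqrt{\lambda}(\pi-r{\rm e}^{i\theta})\bigr)\geqslant\sin(c_{\Theta,\varphi})\sqrt{|\lambda|}\,|\pi-r{\rm e}^{i\theta}|$. The paper's argument is purely computational from the explicit formula for $r_\theta$. Yours rests on the inclusion of the rhombus in the sector at $\pi$, which you should justify in one line. For instance, the angle at $\pi$ of the rhombus is $2\Theta$; alternatively, in the triangle with vertices $0$, $\pi$, $r{\rm e}^{i\theta}$, the angle at $\pi$ grows from $0$ to $\Theta$ as $r$ runs over $[0,r_\theta]$.

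\emph{One sentence to remove.} You claim that $|\alpha|\leqslant\Theta$ alone is not enough because $|\frac{\nu}{2}|+\Theta$ can exceed $\frac{\pi}{2}$. This is false under the standing hypothesis $\varphi<\frac{\pi}{2}-2\Theta$, and you yourself proved $|\frac{\nu}{2}|+\Theta<\frac{\pi}{4}+\frac{\varphi}{2}+\Theta<\frac{\pi}{2}$. The sign information on $\alpha$ plays no role: your $\max\bigl(|\frac{\nu}{2}|,|\frac{\nu}{2}|+\Theta\bigr)$ is just $|\frac{\nu}{2}|+\Theta$. The step is simply the triangle inequality $|\frac{\nu}{2}-\alpha|\leqslant|\frac{\nu}{2}|+|\alpha|<\frac{\pi}{2}$, so delete the contradictory remark and the sign discussion.
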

\begin{proof}For each $r\in[0,r_\theta]$, let
\[
h(r)=\left| \pi-r{\rm e}^{i\theta}\right|^2=r^2-2\pi r \cos\theta+\pi^2,
\]
and remark that 
\begin{align*}
h'(r)&=2r-2\pi\cos\theta\leq 2r_\theta -2\pi\cos\theta=-\frac{2\pi\sin(|\theta|)}{\sin(\Theta+|\theta|)}\cos(\Theta+|\theta|)\leq 0,
\end{align*}
where we have taken into account that $\Theta+|\theta|\in \left( 0,\frac{\pi}{2}\right)$. Consequently, the function $h$ is nonincreasing in $[0,r_\theta]$ and the first part of the Lemma is proved.

On the other hand, from the fact that $\lambda\in \Sigma_\varphi$, we have that $|\nu|<\frac{\pi}{2}+\varphi$ and  
\[
\left|\frac{\nu}{2}+\theta\right|\leq \frac{\pi}{4}+\frac{\varphi}{2}+\left|\theta\right|\leq \frac{\pi}{4}+\frac{\varphi}{2}+\Theta=\frac{\pi}{2}-\left(\frac{\pi}{4}-\frac{\varphi}{2}-\Theta\right).
\]
Since $\frac{\varphi}{2}\in \left(0,\frac{\pi}{4}-\Theta\right)$, if follows that  \eqref{eq:consig} holds with $c_{\Theta,\varphi}=\frac{\pi}{4}-\frac{\varphi}{2}-\Theta\in \left(0,\frac{\pi}{4}-\Theta\right)$. 

To show \eqref{eq:inrel} we remark that
\[
\Re \left(\sqrt{\lambda}\left(\pi-r{\rm e}^{i\theta}\right)\right)=\sqrt{|\lambda|}\pi \cos\left(\frac{\nu}{2}\right)-\sqrt{|\lambda|}r \cos\left(\frac{\nu}{2}+\theta\right).
\]
By using \eqref{eq:consig}, it follows that
\begin{align*}
\Re \left(\sqrt{\lambda}\left(\pi-r{\rm e}^{i\theta}\right)\right)&\geq \sqrt{|\lambda|} \left(\pi\cos\left(\frac{\nu}{2}\right)-r_\theta \cos\left(\frac{\nu}{2}+\theta\right)\right)\\
&=\frac{\pi \sqrt{|\lambda|} \sin|\theta|}{\sin\left(\Theta+|\theta|\right)}\cos\left(\frac{\nu}{2}-\mbox{sgn}(\theta) \Theta\right)\geq 0,
\end{align*}
where, for the last inequality, we have used the facts that $|\theta|$, $\Theta+|\theta|$ and $
\left|\frac{\nu}{2}-\mbox{sgn}(\theta) \Theta\right|$ belong to  $\left(0,\frac{\pi}{2}\right)$. The proof of the Lemma is now complete.
\end{proof}

\begin{lemma}[Sectorial control of $\Re\sqrt{\lambda}$]\label{lem:sector}
There exists $c_\varphi>0$ such that for all
$\lambda\in\Sigma_\varphi$, 
\[
\Re\sqrt{\lambda}\ge c_\varphi|\sqrt{\lambda} |.
\]
\end{lemma}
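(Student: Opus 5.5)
Writing $\lambda=|\lambda|{\rm e}^{i\nu}$ with $|\nu|<\frac{\pi}{2}+\varphi$, the principal branch of the square root gives $\sqrt{\lambda}=|\lambda|^{1/2}{\rm e}^{i\nu/2}$, so that $|\sqrt{\lambda}|=|\lambda|^{1/2}$ and
\[
\Re\sqrt{\lambda}=|\lambda|^{1/2}\cos\left(\frac{\nu}{2}\right).
\]
The whole statement therefore reduces to a uniform lower bound for $\cos\left(\frac{\nu}{2}\right)$ over the admissible arguments $\nu$.

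Since $\varphi\in\left(0,\frac{\pi}{2}-2\Theta\right)\subset\left(0,\frac{\pi}{2}\right)$, we have $\left|\frac{\nu}{2}\right|<\frac{\pi}{4}+\frac{\varphi}{2}<\frac{\pi}{2}$. On $\left[0,\frac{\pi}{2}\right]$ the cosine is even and decreasing, so
\[
\cos\left(\frac{\nu}{2}\right)\geqslant \cos\left(\frac{\pi}{4}+\frac{\varphi}{2}\right)=:c_\varphi .
\]
This constant is strictly positive because $\frac{\pi}{4}+\frac{\varphi}{2}<\frac{\pi}{2}$. It follows that $\Re\sqrt{\lambda}\geqslant c_\varphi|\sqrt{\lambda}|$ for every $\lambda\in\Sigma_\varphi$.

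The only point that needs care is checking that the principal branch is indeed the one used in \eqref{eq:Rdef}, which is the case by the convention fixed in \Cref{lem:more}. The branch is well defined on $\Sigma_\varphi$ because this sector excludes the negative real axis. With that checked, the bound follows directly from the two displays above.
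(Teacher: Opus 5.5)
Your proof is correct and follows essentially the same route as the paper: you write $\lambda=|\lambda|{\rm e}^{i\nu}$ with $|\nu|<\frac{\pi}{2}+\varphi$, take the principal branch so that $\Re\sqrt{\lambda}=|\sqrt{\lambda}|\cos\left(\frac{\nu}{2}\right)$, and bound $\cos\left(\frac{\nu}{2}\right)$ below by $c_\varphi=\cos\left(\frac{\pi}{4}+\frac{\varphi}{2}\right)>0$. This is the same constant the paper uses. Your remarks on the branch being well defined on $\Sigma_\varphi$ are a harmless addition.
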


\begin{proof}
Write $\lambda=|\lambda|{\rm e}^{i\nu}$ with $|\lambda|>0$ and $|\nu|<\frac{\pi}{2}+\varphi$.
Then $\sqrt{\lambda} =\sqrt{|\lambda|}\,{\rm e}^{i\frac{\nu}{2}}$ and \[\left|\frac{\nu}{2}\right|<\frac{\pi}{4}+\frac{\varphi}{2}<\frac{\pi}{2}.\]
Hence $\cos\displaystyle\left(\frac{\nu}{2}\right)\ge c_\varphi:=\cos\left(\frac{\pi}{4}+\frac{\varphi}{2}\right)>0$ and
$\Re\sqrt{\lambda} =|\sqrt{\lambda} |\cos\displaystyle\left(\frac{\nu}{2}\right)\ge c_\varphi|\sqrt{\lambda} |$.
\end{proof}

\begin{lemma}[Elementary $\sinh$ bounds]\label{lem:sinh}
For all $z\in\mathbb C$,
\begin{enumerate}[label=(\alph*),leftmargin=2em]
\item $|\sinh z|\le {\rm e}^{|\Re z|}$,
\item $|\sinh z|\le |z|{\rm e}^{|\Re z|}$.
\end{enumerate}
Moreover, for $\lambda\in\Sigma_\varphi$ with $|\lambda|\ge 1$,
\begin{enumerate}[label=(\alph*),leftmargin=2em, start=3]
\item there exists $c=c(\varphi)>0$ such that
\[
|\sinh(\sqrt{\lambda} \pi)|\ge c\,{\rm e}^{(\Re\sqrt{\lambda} )\pi}.
\]
\end{enumerate}
\end{lemma}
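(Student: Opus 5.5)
Parts (a) and (b) follow directly from the exponential form of $\sinh$. For every $z\in\mathbb C$ we write
\[
\sinh z=\tfrac12\left({\rm e}^{z}-{\rm e}^{-z}\right),
\]
so that
\[
|\sinh z|\leqslant \tfrac12\left({\rm e}^{\Re z}+{\rm e}^{-\Re z}\right)\leqslant {\rm e}^{|\Re z|},
\]
which is (a). For (b) we use the integral representation
\[
\sinh z=z\int_0^1\cosh(tz)\,{\rm d}t .
\]
Since $|\cosh(tz)|\leqslant \cosh(t\Re z)\leqslant {\rm e}^{|\Re z|}$ for $t\in[0,1]$, we obtain $|\sinh z|\leqslant |z|{\rm e}^{|\Re z|}$.

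For (c) we set $z=\sqrt{\lambda}\,\pi$, so that $\Re z=\pi\Re\sqrt\lambda>0$. We factor out the dominant exponential:
\[
|\sinh z|=\tfrac12{\rm e}^{\Re z}\left|1-{\rm e}^{-2z}\right|\geqslant \tfrac12{\rm e}^{\Re z}\left(1-{\rm e}^{-2\Re z}\right).
\]
It therefore suffices to bound $\Re z$ from below by a positive constant depending only on $\varphi$. This comes from \Cref{lem:sector} together with $|\lambda|\geqslant 1$:
\[
\Re\sqrt\lambda\geqslant c_\varphi|\sqrt\lambda|\geqslant c_\varphi .
\]
Hence $1-{\rm e}^{-2\Re z}\geqslant 1-{\rm e}^{-2\pi c_\varphi}$, and (c) holds with
\[
c(\varphi)=\tfrac12\left(1-{\rm e}^{-2\pi c_\varphi}\right)>0 .
\]

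The only point requiring any care is (c). There one must check that $\sqrt\lambda$ is the principal branch, so that $\Re\sqrt\lambda>0$ on $\Sigma_\varphi$ as in \Cref{lem:sector}. One must also check that the restriction $|\lambda|\geqslant1$ is what keeps $|1-{\rm e}^{-2z}|$ away from $0$. Without it, $\sinh(\sqrt\lambda\pi)$ would be comparable to $\sqrt\lambda\pi$ near $\lambda=0$, and the lower bound would fail.
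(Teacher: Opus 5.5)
Your proof is correct and follows essentially the same route as the paper. Parts (a) and (b) use the exponential form and the representation $\sinh z=z\int_0^1\cosh(tz)\,{\rm d}t$, and part (c) uses the bound $\Re\sqrt\lambda\geqslant c_\varphi$ from \Cref{lem:sector}, giving the same constant $c=\tfrac12(1-{\rm e}^{-2\pi c_\varphi})$; your factorization $\tfrac12{\rm e}^{\Re z}|1-{\rm e}^{-2z}|$ is just a rewriting of the paper's bound $|\sinh z|\geqslant\sinh(\Re z)$.
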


\begin{proof}
(a) $\sinh z=({\rm e}^z-{\rm e}^{-z})/2$ gives
\[
|\sinh z|\le \tfrac12(|{\rm e}^z|+|{\rm e}^{-z}|)=\tfrac12({\rm e}^{\Re z}+{\rm e}^{-\Re z})\le {\rm e}^{|\Re z|}.
\]

(b) Use $\sinh z=z\int_0^1 \cosh(tz)\,{\rm d}t$ and $|\cosh\xi|\le \tfrac12(|{\rm e}^\xi|+|{\rm e}^{-\xi}|)\le {\rm e}^{|\Re\xi|}$:
\[
|\sinh z|\le |z|\int_0^1 {\rm e}^{|\Re(tz)|}\,{\rm d}t \le |z|{\rm e}^{|\Re z|}.
\]

(c) By Lemma~\ref{lem:sector}, $\Re\sqrt{\lambda} \ge c_\varphi|\sqrt{\lambda} |\ge c_\varphi>0$ for $|\lambda|\ge1$.
Then
\[
|\sinh(\sqrt{\lambda} \pi)|\ge \sinh(\Re(\sqrt{\lambda} \pi))
\ge \tfrac12(1-{\rm e}^{-2\pi c_\varphi}){\rm e}^{(\Re\sqrt{\lambda} )\pi}.
\]
Set $c=\tfrac12(1-{\rm e}^{-2\pi c_\varphi})$ and the proof of the Lemma is complete.
\end{proof}

Now we have all the ingredients needed to give the announced proof. 

\begin{proof}[Proof of \Cref{est_princ_princ}] 
Denote
\begin{multline}\label{prima_jumate}
(\widetilde{\mathcal R}_{0,\lambda} f)(s)
=
\frac{\sinh(\sqrt{\lambda} (\pi-s))}{\sqrt{\lambda} \sinh(\sqrt{\lambda} \pi)}\int_{0}^{s}\sinh(\sqrt{\lambda}w)\,f(w)\, {\rm d}w \\
=\int_0^s I_{0,\lambda}(s,w)f(w)\, {\rm d}w \qquad\qquad(f\in Y, s\in D_\Theta),
\end{multline}
where
\begin{equation}\label{IOO}
I_{0,\lambda}(s,w)=\frac{\sinh\left(\sqrt{\lambda}w\right)\sinh\left(\sqrt{\lambda} (\pi-s)\right)}{\sqrt{\lambda} \sinh\left(\sqrt{\lambda} \pi\right)}
 \qquad\qquad(s,w\in D_\Theta).
\end{equation}
We recall that the space $Y$ has been introduced in \Cref{cor_imp} and consists of all the functions $f$ which are holomorphic in $D_\Theta$ and continuous on $\overline{D_\Theta}$. 

Firstly, we show that
\begin{equation}\label{eq:estim1}
     \int_{D_\Theta} \rho_\delta(s)\left|(\widetilde{\mathcal R}_{0,\lambda} f)(s)\right|^2\, {\rm d}A(s)\leq \frac{M_0}{(1+|\lambda|)^2} \|f\|^2_{A^2_\delta(D_\Theta)} \qquad \qquad (f\in Y),
\end{equation}
for a positive constant $M_0$ independent of $\lambda$.

To obtain the desired estimate \eqref{eq:estim1} it suffices to show that there exist two positive constants $C_1$ and $C_2$ independent of $\lambda$ such that 
\begin{equation}\label{Schur1}
    \sup_{r {\rm e}^{i\theta}\in D_\Theta} \int_0^{r} \left|I_{0,\lambda}(r {\rm e}^{i\theta},t {\rm e}^{i\theta})\right|\widehat{\rho}_\delta(r,t,\theta)
     \, {\rm d}t 
    \leqslant \frac{C_1}{1+|\lambda|} \quad\qquad(\lambda\in \Sigma_\varphi),
\end{equation}
\begin{equation}\label{Schur2}
    \sup_{t {\rm e}^{i\theta}\in D_\Theta} \int_t^{r_\theta} \left|I_{0,\lambda}(r {\rm e}^{i\theta},t {\rm e}^{i\theta})\right| \widehat{\rho}_\delta(r,t,\theta)
     \, {\rm d}r
    \leqslant \frac{C_2}{1+|\lambda |} \quad\qquad(\lambda\in \Sigma_\varphi),
\end{equation}
where $\widehat{\rho}_\delta(r,t,\theta):=\sqrt{\frac{r \rho_\delta(r {\rm e}^{i\theta})}{t \rho_\delta(t {\rm e}^{i\theta})}}$. Indeed, by Cauchy-Schwarz
\begin{multline*}
    \int_{D_\Theta} \rho_\delta(s)\left|(\widetilde{\mathcal R}_{0,\lambda} f)(s)\right|^2\, {\rm d}A(s)=
     \int_{D_\Theta} \rho_\delta(s)\left|\int_0^s I_{0,\lambda}(s,w)f(w)\, {\rm d}w\right|^2\, {\rm d}A(s)\\ =
     \int_{D_\Theta} \rho_\delta(s)\left|\int_0^{|s|} I_{0,\lambda}\left(s,t\frac{s}{|s|}\right)f\left(t\frac{s}{|s|}\right) \frac{s}{|s|}\, {\rm d}t\right|^2\, {\rm d}A(s)\\
     = \int_{-\Theta}^\Theta\int_0^{r_\theta} \rho_\delta(r {\rm e}^{i\theta})\left|\int_0^{r} I_{0,\lambda}(r {\rm e}^{i\theta},t {\rm e}^{i\theta})
     f(t {\rm e}^{i\theta}) {\rm e}^{i\theta} \, {\rm d}t\right|^2 r \, {\rm d}r\, {\rm d}\theta\\
     \leqslant \int_{-\Theta}^\Theta\int_0^{r_\theta} \rho_\delta(r {\rm e}^{i\theta}) \int_0^{r} \left|I_{0,\lambda}(r {\rm e}^{i\theta},t {\rm e}^{i\theta})\right|\frac{1}{\sqrt{t \rho_\delta(t {\rm e}^{i\theta})}}
     \, {\rm d}t \\
     \times \int_0^{r} |I_{0,\lambda}(r {\rm e}^{i\theta},t {\rm e}^{i\theta})| \frac{t r}{\sqrt{t \rho_\delta(t {\rm e}^{i\theta})}} \rho_\delta(t{\rm e}^{i\theta})
     |f(t {\rm e}^{i\theta}) |^2 \, {\rm d}t  \, {\rm d}r\, {\rm d}\theta\\
     \leqslant \frac{C_1}{1+|\lambda |} \int_{-\Theta}^\Theta\int_0^{r_\theta}   \int_0^{r} |I_{0,\lambda}(r {\rm e}^{i\theta},t {\rm e}^{i\theta})|t\, \widehat{\rho}_\delta(r,t,\theta)\rho_\delta(t{\rm e}^{i\theta})
     |f(t {\rm e}^{i\theta}) |^2 \, {\rm d}t  \, {\rm d}r\, {\rm d}\theta\\
     =\frac{C_1}{1+|\lambda |} \int_{-\Theta}^\Theta\int_0^{r_\theta}   \int_t^{r_\theta}  |I_{0,\lambda}(r {\rm e}^{i\theta},t {\rm e}^{i\theta})| t\, \widehat{\rho}_\delta(r,t,\theta)\rho_\delta(t{\rm e}^{i\theta})
     |f(t {\rm e}^{i\theta}) |^2   \, {\rm d}r \, {\rm d}t \, {\rm d}\theta\\
     \leqslant \frac{C_1 C_2}{(1+|\lambda|)^2} \int_{-\Theta}^\Theta\int_0^{r_\theta}  t \rho_\delta(t{\rm e}^{i\theta})
     |f(t {\rm e}^{i\theta}) |^2   \, {\rm d}r \, {\rm d}t \, {\rm d}\theta.
\end{multline*}
It follows that \eqref{eq:estim1} holds for $M_0=C_1C_2$.

Now let us show that \cref{Schur1} and \cref{Schur2} are verified. 

Firstly, we analyze the case $|\lambda|<1$. We begin by remarking that there exist constants $c_1,c_2>0$ such that 
\begin{equation}\label{eticheta}
\left|\sinh (\sqrt{\lambda}s)\right|\leqslant c_1\sqrt{|\lambda|}|s|\qquad\qquad(\lambda,s\in\mathbb{C},\,\, |\lambda|\leqslant 1,\,\,|s|\leqslant \pi),
\end{equation}
\begin{equation}\label{eticheta2}c_2\sqrt{|\lambda|} \leqslant \left|\sinh (\pi\sqrt{\lambda})\right| \qquad\qquad(\lambda\in\Sigma_\varphi,\,\, |\lambda|\leqslant 1).\end{equation}
Indeed, inequality \eqref{eticheta} follows from the boundedness of the entire function $\frac{\sinh (\zeta) }{\zeta}$ in the ball $|\zeta|\leq \pi$. The second inequality \eqref{eticheta2} is a consequence of the fact that $\frac{\sinh (\pi \zeta) }{\pi \zeta}$ does not vanish in the compact set $\left\{\zeta\in\mathbb{C}\,:\, |\arg \zeta|\leq \frac{\pi}{4}+\frac{\varphi}{2}\right\}$.
By taking into account \eqref{IOO}, from inequalities \eqref{eticheta} and \eqref{eticheta2}, we deduce that 
\[
 \left|I_{0,\lambda}(r {\rm e}^{i\theta},t {\rm e}^{i\theta})\right|\widehat{\rho}_\delta(r,t,\theta)\leqslant \frac{c_1^2}{c_2} \left|\pi-r{\rm e}^{i\theta}\right| r^{\frac{1+\delta}{2}}t^{\frac{1-\delta}{2}}\left|\frac{\pi-r{\rm e}^{i\theta}}{\pi-t{\rm e}^{i\theta}}\right|^{\frac{\delta}{2}}.
\]
Since $\delta\in (0,1)$ and since, according to  part {\it (a)} of \Cref{lem:more}, \( \left|\pi-r{\rm e}^{i\theta}\right|\leq \left|\pi-t{\rm e}^{i\theta}\right|\) for $0\leqslant t\leqslant r$, the above estimate implies that \cref{Schur1} and \cref{Schur2} are verified for $|\lambda|<1$.

Let us now consider the case $\lambda=|\lambda|{\rm e}^{i\nu}\in \Sigma_\varphi$, $|\lambda|\geq 1$. We begin by evaluating $I_{0,\lambda}$. For $\theta\in(-\Theta,\Theta)$ and $r,t\in[0,r_\theta]$, by using {\it (a)} and {\it (c)} from Lemma \ref{lem:sinh} we obtain that 
\begin{align*}
\left|I_{0,\lambda}(r {\rm e}^{i\theta},t {\rm e}^{i\theta})\right| &\leq \frac{{\rm e}^{\left|\Re\left(\sqrt{\lambda}\left(\pi-r{\rm e}^{i\theta}\right)\right)\right|}}{c \sqrt{|\lambda|}{\rm e}^{\pi\Re \sqrt{\lambda}}}
\left|\sinh\left(\sqrt{\lambda} t {\rm e}^{i\theta}\right)\right|.
\end{align*}
By taking into account {\it (b)} from Lemma \ref{lem:more}, from the above estimate it follows that
\begin{equation}\label{eq:estkernel}
\left|I_{0,\lambda}(r {\rm e}^{i\theta},t {\rm e}^{i\theta})\right| \leqslant 
\frac{{\rm e}^{ -\sqrt{|\lambda|}\, r \, \varsigma\ }}{c \sqrt{|\lambda|}}
\left|\sinh\left(\sqrt{\lambda} t {\rm e}^{i\theta}\right)\right|,
\end{equation}
where $\varsigma:=\cos\left(\frac{\nu}{2}+\theta\right).$
We pass to the proof of \cref{Schur1}. Firstly, we remark that from {\it (a)} of Lemma \ref{lem:more} and \eqref{eq:estkernel}, we have that 
\begin{multline*}
 \int_0^{r} \left|I_{0,\lambda}(r {\rm e}^{i\theta},t {\rm e}^{i\theta})\right|\widehat{\rho}_\delta(r ,t,\theta)
     \, {\rm d}t \leqslant r^\frac{1+\delta}{2} \int_0^{r} \left|I_{0,\lambda}(r {\rm e}^{i\theta},t {\rm e}^{i\theta})\right|t^{-\frac{1+\delta}{2}}\,{\rm d}t\\ 
     \leqslant \frac{1}{c \sqrt{|\lambda|}} r^\frac{1+\delta}{2} {\rm e}^{-\sqrt{|\lambda|}\, r \varsigma}\int_0^{r} 
\left|\sinh\left(\sqrt{\lambda} t {\rm e}^{i\theta}\right)\right| t^{-\frac{1+\delta}{2}}\,{\rm d}t\\
=\frac{1}{c \sqrt{|\lambda|}} r^\frac{1+\delta}{2} {\rm e}^{-\sqrt{|\lambda|}\, r \varsigma} \int_0^{\frac{1}{\sqrt{|\lambda|}}} 
\left|\sinh\left(\sqrt{\lambda} t {\rm e}^{i\theta}\right)\right| t^{-\frac{1+\delta}{2}}\,{\rm d}t\\+\frac{1}{c \sqrt{|\lambda|}} r^\frac{1+\delta}{2} {\rm e}^{-\sqrt{|\lambda|}\, r \varsigma}\int_{\frac{1}{\sqrt{|\lambda|}}} ^r
\left|\sinh\left(\sqrt{\lambda} t {\rm e}^{i\theta}\right)\right| t^{-\frac{1+\delta}{2}}\,{\rm d}t:=I_1+I_2.
\end{multline*}
We evaluate each of the two terms from above. By using {\it (b)} from Lemma \ref{lem:sinh} have that
\begin{align*}
I_1&\leqslant \frac{{\rm e}}{c} r^\frac{1+\delta}{2} {\rm e}^{-\sqrt{|\lambda|}\, r \varsigma} \int_0^{\frac{1}{\sqrt{|\lambda|}}} 
 t^{\frac{1-\delta}{2}}\,{\rm d}t=\frac{2{\rm e}}{c(3-\delta)} r^\frac{1+\delta}{2} {\rm e}^{-\sqrt{|\lambda|}\, r \varsigma}|\lambda|^{-\frac{3-\delta}{4}}\\&=\frac{2{\rm e}}{c(3-\delta)|\lambda| \varsigma^\frac{1+\delta}{2}} \left(\sqrt{|\lambda|}r \varsigma\right)^\frac{1+\delta}{2} {\rm e}^{-\sqrt{|\lambda|}\, r \varsigma}.
\end{align*}
Since the function $u\in [0,\infty)\to u^\frac{1+\delta}{2} {\rm e}^{-u }$ is bounded and, according to \eqref{eq:consig},  \begin{equation}\label{eq:conute}\varsigma\geq \sin\left(c_{\Theta,\varphi}\right)>0,\end{equation}we deduce that there exists an absolute constant  $C_{1,1}>0$ such that 
\begin{equation}\label{eq:I1}
I_1\leq \frac{C_{1,1}}{|\lambda|}.
\end{equation}

On the other hand, {\it (a)} from Lemma \ref{lem:sinh} implies that 
\begin{align*}
I_2&\leqslant \frac{ r^\frac{1+\delta}{2}}{c \sqrt{|\lambda|}} {\rm e}^{ -\sqrt{|\lambda|}\, r \varsigma}\int_{\frac{1}{\sqrt{|\lambda|}}} ^r 
{\rm e}^{\sqrt{|\lambda|}\, t \varsigma}  t^{-\frac{1+\delta}{2}}\,{\rm d}t\\&=\frac{\left(r \sqrt{|\lambda|}\varsigma\right)^\frac{1+\delta}{2}}{c |\lambda| \varsigma}  {\rm e}^{-\sqrt{|\lambda|}\, r \varsigma}\int_{\varsigma} ^{r \sqrt{|\lambda|} \varsigma} 
{\rm e}^{u}  u^{-\frac{1+\delta}{2}} \,{\rm d}u.
\end{align*}
From the above estimate, \eqref{eq:conute} and the  fact that the function
\[
  v \mapsto v^\frac{1+\delta}{2} {\rm e}^{-v }\int_{\sin(c_{\Theta,\varphi})}^v 
  {\rm e}^{u}  u^{-\frac{1+\delta}{2}} \,{\rm d}u,
\]
is  bounded on $\left[\sin(c_{\Theta,\varphi}),\infty\right)$, we deduce that there exists an absolute constant  $C_{1,2}>0$ such that 
\begin{equation}\label{eq:I2}
I_2\leq \frac{C_{1,2}}{|\lambda|}.
\end{equation}
From \eqref{eq:I1} and \eqref{eq:I2} we deduce that \eqref{Schur1} is verified for $|\lambda|\geq 1$ with $C_1=2\left(C_{1,1}+C_{1,2}\right)$.

Now, we pass to show that \eqref{Schur2} holds true for $\lambda=|\lambda|{\rm e}^{i\nu}\in \Sigma_\varphi$, $|\lambda|\geq 1$. Note that from {\it (a)} of Lemma \ref{lem:more} and \eqref{eq:estkernel}, we get
\begin{multline*}
I:=\int_t^{r_\theta} \left|I_{0,\lambda}(r {\rm e}^{i\theta},t {\rm e}^{i\theta})\right| \widehat{\rho}_\delta(r,t,\theta)
     \, {\rm d}r\leqslant 
t^{-\frac{1+\delta}{2}} \int_t^{r_\theta} \left|I_{0,\lambda}(r {\rm e}^{i\theta},t {\rm e}^{i\theta})\right| r^\frac{1+\delta}{2}
     \, {\rm d}r\\
\leqslant t^{-\frac{1+\delta}{2}}  \int_t^{r_\theta} \frac{{\rm e}^{-\sqrt{|\lambda|}\, r \varsigma}}{c \sqrt{|\lambda|}}
\left|\sinh\left(\sqrt{\lambda} t {\rm e}^{i\theta}\right)\right| r^\frac{1+\delta}{2}  \, {\rm d}r\\
\leqslant \frac{t^{-\frac{1+\delta}{2}}}{c \sqrt{|\lambda|}}  \left|\sinh\left(\sqrt{\lambda} t {\rm e}^{i\theta}\right)\right|  \int_{\sqrt{|\lambda|}\, t \varsigma}^{\infty} {\rm e}^{ - u}
\frac{u^\frac{1+\delta}{2}}{\left(\sqrt{|\lambda|} \varsigma\right)^{\frac{3+\delta}{2}}}  \, {\rm d}u.
\end{multline*}
Firstly we analyze the case $0\leqslant \sqrt{|\lambda|}\, t \leqslant 1$. By using  {\it (b)} of  Lemma \ref{lem:sinh}  we have that
\begin{align*}
t^{-\frac{1+\delta}{2}}\left|\sinh\left(\sqrt{\lambda} t {\rm e}^{i\theta}\right)\right|\leqslant {\rm e} \sqrt{\lambda} t^{\frac{1-\delta}{2}},
\end{align*}
hence, we get
\begin{align*}
I\leqslant \frac{{\rm e}}{c |\lambda| \cos^2\left(\frac{\nu}{2}+ \theta\right)} \left(\sqrt{|\lambda|}\, t \cos\left(\frac{\nu}{2}+ \theta\right)\right)^{\frac{1-\delta}{2}} \int_{\sqrt{|\lambda|}\, t \varsigma}^{\infty} {\rm e}^{ - u}
u^\frac{1+\delta}{2} \, {\rm d} u.
\end{align*}
Since the function $h:[0,1]\rightarrow \mathbb{R}$, defined as
\begin{align*}
h(v)=v^{\frac{1-\delta}{2}}\int_v^\infty {\rm e}^{-u} u^\frac{1+\delta}{2} \, {\rm d}u,
\end{align*}
is bounded, by taking into account \eqref{eq:conute}, we get that there exists a positive constant $C_{2,1}$ such that  
\begin{equation}\label{eq:I3}
I\leqslant \frac{C_{2,1}}{|\lambda|}.
\end{equation}
Secondly, we consider the case $ \sqrt{|\lambda|}\, t > 1$. By using  {\it (a)} of  Lemma \ref{lem:sinh}  we have that
\begin{multline*}
I\leqslant \frac{t^{-\frac{1+\delta}{2}}}{c \sqrt{|\lambda|}} 
{\rm e}^{ \sqrt{|\lambda|}\, t \varsigma}
\int_{\sqrt{|\lambda|}\, t \varsigma}^{\infty} {\rm e}^{- u}
\frac{u^\frac{1+\delta}{2}}{\left(\sqrt{|\lambda|} \varsigma\right)^{\frac{3+\delta}{2}}} \, {\rm d}u \\
\leqslant \frac{\left(\sqrt{|\lambda|}\, t \varsigma\right)^{-\frac{1+\delta}{2}} {\rm e}^{\sqrt{|\lambda|}\, t \varsigma}}{c |\lambda| \varsigma} 
\int_{\sqrt{|\lambda|}\, t \varsigma}^{\infty} {\rm e}^{-u}
u^\frac{1+\delta}{2} \, {\rm d}u.
\end{multline*}
By noting that the function
\begin{align*}
h(v)=v^{-\frac{1+\delta}{2}} {\rm e}^{v} \int_v^\infty {\rm e}^{-u} u^\frac{1+\delta}{2} \, {\rm d}u,
\end{align*}
is bounded on $\left[\sin(c_{\Theta,\varphi}), \infty\right)$ and taking once more into account \eqref{eq:conute}, we get 
\begin{equation}\label{eq:I4}
I\leqslant \frac{C_{2,2}}{|\lambda|},\end{equation}
for a positive constant $C_{2,2}$ independent of $\lambda$. Finally, from \eqref{eq:I3}-\eqref{eq:I4} we obtain that \eqref{Schur2} holds for $|\lambda|\geq 1$ with $C_2=2 \max\{C_{2,1},C_{2,2}\}$.

To evaluate the second term in $\widetilde{\mathcal R}_\lambda$, we denote
\begin{multline}\label{adoua_jumate}
(\widetilde{\mathcal R}_{\pi,\lambda} f)(s)
=
\frac{\sinh(\sqrt{\lambda}s)}{\sqrt{\lambda} \sinh(\sqrt{\lambda} \pi)}\int_{s}^{\pi}\sinh(\sqrt{\lambda} (\pi-w))\,f(w)\,dw \\
=\int_s^{\pi} I_{\pi,\lambda}(s,w)f(w)\, {\rm d}w \qquad\qquad(f\in A_\delta^2(D_\Theta), s\in D_\Theta),
\end{multline}
where
\begin{equation}\label{Ipipi}
I_{\pi,\lambda}(s,w)=\frac{\sinh(\sqrt{\lambda}s)}{\sqrt{\lambda} \sinh(\sqrt{\lambda} \pi)} \sinh(\sqrt{\lambda} (\pi-w))  \qquad\qquad(s,w\in D_\Theta).
\end{equation}

To estimate $\widetilde{\mathcal R}_{\pi,\lambda}$ it is not difficult to check that for every $f\in Y$ we have
$$
(\widetilde{\mathcal R}_{\pi,\lambda} f)(\pi-s)=(\widetilde{\mathcal R}_{0,\lambda} \tilde f)(s),
$$
where $\tilde f(w)=f(\pi-w)$. Consequently, from the estimate \cref{eq:estim1} already obtained for $\widetilde{\mathcal R}_{0,\lambda}$, we deduce that 
\begin{equation}\label{eq:estim2}
     \int_{D_\Theta} \rho_\delta(s)\left|(\widetilde{\mathcal R}_{\pi,\lambda} f)(s)\right|^2\, {\rm d}A(s)\leq \frac{M_\pi}{(1+|\lambda|)^2} \|f\|^2_{A^2_\delta(D_\Theta)} \quad \qquad (f\in Y),
\end{equation}
for a positive constant independent of $\lambda$.

From \cref{eq:estim1} and \cref{eq:estim2} it follows that 
\begin{equation}\label{eq:estim3}
     \int_{D_\Theta} \rho_\delta(s)\left|(\widetilde{\mathcal R}_{\lambda} f)(s)\right|^2\, {\rm d}A(s)\leq \frac{2(M_0+M_\pi)}{(1+|\lambda|)^2} \|f\|^2_{A^2_\delta(D_\Theta)} \quad \quad (f\in Y).
\end{equation}
The conclusion of the theorem is a consequence of inequality \cref{eq:estim3} and of the density of $Y$ in $\widetilde{X}_D$. 
\end{proof}
\section{Appendix II: Weighted Bergman and Bergman--Sobolev estimates on a rhombus}\label{app2_fin}

Let $0<\Theta<\frac{\pi}{4}$, and consider the rhombus $D_\Theta$ defined in \eqref{rtheta}. $D_\Theta$ can be equivalently described as
\begin{equation}\label{eq:DTheta-cartesian}
D_\Theta
=
\{x+iy\in\mathbb C: 0<x<\pi,\ |y|<a(x)\},
\quad
 a(x):=\tan\Theta\,\min(x,\pi-x).
\end{equation}
For $0<\delta<1$, we have introduced in \eqref{def_rho_delta} the weight  \( \rho_\delta(s)=|s|^\delta|\pi-s|^\delta\) defined for $s\in D_\Theta$. 
The weighted Bergman space \(A_\delta^2(D_\Theta)\) and the weighted Bergman--Sobolev space \(
A_\delta^{1,2}(D_\Theta)\) have been introduced in \eqref{fara_rho} and \eqref{Sob_Berg}, respectively.

In this Appendix we record two basic facts. The first is the property that a function $f\in{\rm Hol(D_\Theta)}$ with $f'\in A_\delta^2(D_\Theta)$ also verifies that $f\in A_\delta^2(D_\Theta)$. The second is a Poincar\'e-type inequality for functions in \( A_\delta^{1,2}(D_\Theta)\).

\begin{proposition}\label{prop:embedding-DTheta}
Let $f$ be holomorphic in $D_\Theta$ and assume that
\(f'\in A_\delta^2(D_\Theta)\). Then
\(f\in A_\delta^2(D_\Theta).\)
\end{proposition}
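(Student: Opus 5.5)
We plan to show that $f$ is recovered from $f'$ by integration along paths inside $D_\Theta$, and that this integration operator is bounded in the weighted $L^2$ norm. The natural base point is the centre $s_0=\frac{\pi}{2}$ of the rhombus. Since $f$ is holomorphic on the (convex) set $D_\Theta$, we have
\[
f(s)=f(s_0)+\int_{s_0}^{s} f'(w)\,{\rm d}w \qquad (s\in D_\Theta),
\]
with the integral taken along the segment $[s_0,s]\subset D_\Theta$. The constant $f(s_0)$ is harmless, because $\rho_\delta$ is bounded on $D_\Theta$ and $D_\Theta$ has finite area, so constants lie in $A_\delta^2(D_\Theta)$. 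It therefore suffices to prove
\[
\int_{D_\Theta}\rho_\delta(s)\Bigl|\int_{s_0}^{s} g(w)\,{\rm d}w\Bigr|^2\,{\rm d}A(s)\leqslant C\|g\|^2_{A_\delta^2(D_\Theta)}
\]
for holomorphic $g=f'\in A^2_\delta(D_\Theta)$.

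To prove this bound we first localize. On a compact subset $K$ of $D_\Theta$ containing $s_0$, for instance $K=\{|{\rm Re}\,s-\frac{\pi}{2}|\leqslant\frac{\pi}{4}\}\cap\overline{D_\Theta}$ shrunk slightly, the sub-mean value property gives a pointwise bound $|g(w)|\leqslant C\|g\|_{A^2_\delta(D_\Theta)}$ on a neighbourhood of $K$. This holds because $\rho_\delta$ is bounded below away from $0$ and $\pi$, and one can use discs, or sectors near the boundary edges, of comparable size. Away from the two vertices, however, the rhombus boundary edges are at bounded distance from both $0$ and $\pi$, so the weight is comparable to $1$ there. In that region the problem is just the unweighted statement that the primitive of an $A^2$ function on a Lipschitz domain is $A^2$, and this is handled by a Hardy-type inequality along segments from $s_0$.

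The essential difficulty is therefore near the vertices $0$ and $\pi$. By the symmetry $s\mapsto\pi-s$ it suffices to treat $0$. Near $0$ we use polar coordinates $s=r{\rm e}^{i\theta}$ and write
\[
f(r{\rm e}^{i\theta})=f(r_1{\rm e}^{i\theta})-\int_r^{r_1} g(t{\rm e}^{i\theta}){\rm e}^{i\theta}\,{\rm d}t, \qquad 0<r<r_1,
\]
with a fixed $r_1$ small, say $r_1=\frac{\pi}{4}$. The first term is bounded by the compact-set estimate above. The second term, with weight $r^\delta\cdot r\,{\rm d}r$, requires the one-dimensional Hardy inequality
\[
\int_0^{r_1} r^{1+\delta}\Bigl|\int_r^{r_1}h(t)\,{\rm d}t\Bigr|^2{\rm d}r\leqslant C\int_0^{r_1} t^{1+\delta}\cdot t^{2}\,|h(t)|^2\,\frac{{\rm d}t}{t^{2}}\ \text{(with the correct powers)}.
\]
This is the dual Hardy inequality $\int_0^{r_1} r^{\alpha}|\int_r^{r_1} h|^2\,{\rm d}r\leqslant C_\alpha\int_0^{r_1} t^{\alpha+2}|h|^2\,{\rm d}t$, valid for $\alpha>-1$; we apply it with $\alpha=1+\delta$. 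It yields a right-hand side $\int t^{3+\delta}|g|^2\,{\rm d}t\leqslant r_1^2\int t^{1+\delta}|g|^2\,{\rm d}t$, which is finer than needed. Integrating in $\theta\in(-\Theta,\Theta)$ gives the bound by $\|g\|^2_{A^2_\delta}$. Alternatively, and more crudely, Cauchy--Schwarz with weight $t^{-(1+\delta)}$ would give $|\int_r^{r_1}h|^2\leqslant C(r^{-\delta}+1)\int t^{1+\delta}|h|^2$, and then $\int_0^{r_1} r^{1+\delta}r^{-\delta}\,{\rm d}r<\infty$ closes the estimate as well. This shows there is ample room, since the vertex is an integrable point.

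The main obstacle is bookkeeping rather than analysis. The three pieces (the neighbourhood of $0$, the neighbourhood of $\pi$, and the middle) must be glued consistently, using base points on the rays that stay inside $D_\Theta$; the sector rays near $0$ of length $r_1<r_\theta$ do. The pointwise control of $g$ at the base points $r_1{\rm e}^{i\theta}$ must also be uniform in $\theta$, and points with $\theta$ near $\pm\Theta$ lie close to the edges. We handle this by instead averaging the base point over $r_1\in[\frac{\pi}{8},\frac{\pi}{4}]$. This replaces the pointwise value $f(r_1{\rm e}^{i\theta})$ by an $L^2$ average of $f$ over a region where the weight is comparable to $1$, and that average is controlled by the unweighted middle-region estimate.
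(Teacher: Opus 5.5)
Your argument is sound in outline, but it takes a different route from the paper's.

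\textbf{The paper's route.} The paper maps $D_\Theta$ by a real affine map $T$ onto $Q=(-1,1)^2$, where $\rho_\delta\circ T\asymp\sigma^\delta$ with $\sigma(x,y)=4-(x+y)^2$. By Fubini it picks a vertical line $x=x_0$ in the strip $|x|\leqslant\frac12$ (where $\sigma\geqslant\frac74$) with $F(x_0,\cdot)\in L^2(-1,1)$. It then integrates horizontally from $x_0$. The key bound is $\sup_Q \sigma(x,y)^\delta\int_{I_x}\sigma(\xi,y)^{-\delta}\,{\rm d}\xi<\infty$, which holds because $(4-t^2)^{-\delta}\in L^1(-2,2)$. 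This treats edges and vertices in one self-contained computation, and it is exactly where $\delta<1$ is used.

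\textbf{Your route.} You integrate along rays into the vertices, where the measure is $\asymp t^{1+\delta}\,{\rm d}t\,{\rm d}\theta$. The dual Hardy inequality (exponent $1+\delta>-1$) then imposes no upper bound on $\delta$. The price is twofold: the neighbourhood of the edges is delegated to the unweighted fact that a holomorphic function with $L^2$ derivative on a bounded Lipschitz domain is $L^2$, and the pieces must be glued.

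\textbf{Details that need repair.}

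(i) You claim that sub-mean-value estimates ``over sectors near the boundary edges'' give $|g|\leqslant C\|g\|_{A^2_\delta(D_\Theta)}$ on a neighbourhood of a set touching $\partial D_\Theta$. This is false: $(w-b)^{-1/2}$, with $b$ the midpoint of an edge and the branch cut outside $D_\Theta$, lies in $A^2_\delta(D_\Theta)$ and is unbounded. Pointwise bounds are available only on compact subsets of $D_\Theta$. After your averaging device, that is all you need.

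(ii) With $r_1=\frac\pi4$ and the middle strip $|\Re s-\frac\pi2|\leqslant\frac\pi4$, some points near the edges with $\frac\pi4\cos\Theta<\Re s<\frac\pi4$ lie in neither piece. Also, the averaging annulus $\frac\pi8<|s|<\frac\pi4$ is not inside the strip. Take instead the strip $\frac{\pi}{16}<\Re s<\frac{15\pi}{16}$, on which $\rho_\delta$ is still comparable to $1$.

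(iii) A Hardy inequality along segments issuing from an interior point fails logarithmically in two dimensions. So in $f(s)-f(s_0)=(s-s_0)\int_0^1 g(s_0+t(s-s_0))\,{\rm d}t$ you must split the parameter range. For $t\in[0,\frac12]$, use the interior pointwise bound on the compact set $s_0+\frac12(\overline{D_\Theta}-s_0)$. For $t\in[\frac12,1]$, use Minkowski's integral inequality and the dilation $w=s_0+t(s-s_0)$, whose Jacobian factor $t^{-2}$ is at most $4$.

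\textbf{A simplification.} Once (iii) is written out, the vertex localisation is unnecessary. For $s\in D_\Theta$ and $t\in[\frac12,1]$ one has $|w|\geqslant \Re w\geqslant t\,\Re s\geqslant \frac{\cos\Theta}{2}|s|$, and likewise $|\pi-w|\geqslant\frac{\cos\Theta}{2}|\pi-s|$. Hence $\rho_\delta(s)\leqslant (2\sec\Theta)^{2\delta}\rho_\delta(w)$. The same Minkowski step then bounds the weighted norm of the $t\in[\frac12,1]$ part by $\frac\pi2(2\sec\Theta)^{\delta}\log 2\,\|g\|_{A^2_\delta(D_\Theta)}$. This proves the proposition in a few lines, and for every $\delta\geqslant 0$.
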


\begin{proof}
Since $D_\Theta$ is a rhombus, there exist $s_0\in\mathbb C$ and two $\mathbb R$-linearly independent complex numbers $a,b$ such that
\[
D_\Theta=T(Q),
\qquad
Q:=(-1,1)^2,
\qquad
T(x,y)=s_0+ax+by,
\]
and we may choose $T$ so that \(T(-1,-1)=0\) and \(T(1,1)=\pi\). 
Thus $T$ is a real affine bijection from $Q$ onto $D_\Theta$, with constant Jacobian
\(
J:=|\det \nabla T|>0.
\)

Define
\[
F(x,y):=f(T(x,y)) \qquad\qquad(x,y \in (-1,1)).
\]
Let $s=T(x,y)$. Since $f$ is holomorphic, differentiation in the real variables gives
\[
F_x(x,y)=a\,f'(s)=a\,f'(T(x,y)),
\qquad
F_y(x,y)=b\,f'(s)=b\,f'(T(x,y)).
\]
In particular, $F\in C^1(Q)$.

Set
\[
\sigma(x,y):=(x+y+2)(2-x-y)=4-(x+y)^2.
\]
Because $T$ is bi-Lipschitz and sends $\begin{bmatrix} -1\\ -1\end{bmatrix}$ to $\begin{bmatrix} 0\\ 0\end{bmatrix}$ and $\begin{bmatrix} 1\\ 1\end{bmatrix}$ to 
$\begin{bmatrix} \pi \\ 0\end{bmatrix}$, we have
\[
|T(x,y)| \asymp \sqrt{(x+1)^2+(y+1)^2},\qquad |T(x,y)-\pi|\asymp \sqrt{(1-x)^2+(1-y)^2}.
\]
Here and in the sequel, the notation $f\asymp g$ means that $f$ and $g$ are equal up to multiplicative constants on their domain of definition. Since for nonnegative numbers $u,v$ one has 
\[
\frac{u+v}{\sqrt2}\le \sqrt{u^2+v^2}\le u+v,
\]
it follows that
\begin{equation}\label{eq:comppon}
\rho_\delta(T(x,y))
\asymp \sigma(x,y)^\delta,
\qquad (x,y)\in Q.
\end{equation}
Therefore, using the change of variables $s=T(x,y)$,
\[
\iint_Q |F_x(x,y)|^2\sigma(x,y)^\delta\,{\rm d}x\,{\rm d}y<\infty,\quad 
\iint_Q |F_y(x,y)|^2\sigma(x,y)^\delta\,{\rm d}x\,{\rm d}y<\infty.
\]

We claim that
\begin{equation}\label{eq:fxyl2}
\iint_Q |F(x,y)|^2\sigma(x,y)^\delta\,{\rm d}x\,{\rm d}y<\infty.
\end{equation}
First note that on the vertical strip $|x|\le \tfrac12$ we have $|x+y|\le \tfrac32$, hence
\[
\sigma(x,y)=4-(x+y)^2\ge 4-\frac94=\frac74.
\]
Thus 
\(
F_y\in L^2\Bigl(\bigl(-\tfrac12,\tfrac12\bigr)\times(-1,1)\Bigr),
\) 
and by Fubini's theorem, there exists some $x_0\in\left(-\tfrac12,\tfrac12\right)$ such that
\[
\int_{-1}^1 |F_y(x_0,y)|^2\,{\rm d}y<\infty.
\]
For $y\in(-1,1)$,
\[
F(x_0,y)=F(x_0,0)+\int_0^y F_y(x_0,t)\,{\rm d}t.
\]
By Cauchy--Schwarz,
\[
\int_{-1}^1 |F(x_0,y)|^2\,{\rm d}y 
\le 4|F(x_0,0)|^2+8\int_{-1}^1 |F_y(x_0,t)|^2\,{\rm d}t<\infty.
\]

Now fix $y\in(-1,1)$ and $x\in(-1,1)$, and let $I_x$ be the interval with end points $x_0$ and $x$. Then
\[
F(x,y)=F(x_0,y)+\int_{x_0}^x F_x(\xi,y)\,{\rm d}\xi,
\]
so that
\begin{align}\nonumber 
\sigma(x,y)^\delta |F(x,y)|^2
&\le 2\sigma(x,y)^\delta |F(x_0,y)|^2
+2\sigma(x,y)^\delta \left|\int_{x_0}^x F_x(\xi,y)\,{\rm d}\xi\right|^2\\
&\le 2\sigma(x,y)^\delta|F(x_0,y)|^2
+2 K(x,y) \int_{-1}^1 |F_x(\xi,y)|^2\sigma(\xi,y)^\delta\,{\rm d}\xi, \label{eq:jkl}
\end{align}
where $K(x,y)=\sigma(x,y)^\delta\left(\int_{I_x}\sigma(\xi,y)^{-\delta}\, {\rm d}\xi\right)$. 

We now claim that
\begin{equation}\label{eq:jkl2}
K:=\sup_{(x,y)\in Q}K(x,y)<\infty.
\end{equation}
Indeed, with the change of variables $u=x+y$, $u_0=x_0+y$, we have $u\in(-2,2)$ and $u_0\in\left(-\tfrac32,\tfrac32\right)$, while 
\(
\sigma(\xi,y)=4-(\xi+y)^2.
\) 
Thus
\[
\int_{I_x}\sigma(\xi,y)^{-\delta}\,{\rm d}\xi=
\int_{I_u} (4-t^2)^{-\delta}\,{\rm d}t,
\]
where $I_u$ is the interval with endpoints $u_0$ and $u$. Since $0<\delta<1$, we have $(4-t^2)^{-\delta}\in L^1(-2,2)$, so the integral above is uniformly bounded. Moreover, since $\sigma(x,y)^\delta\le 4^\delta$, it follows that  $K<\infty$. Therefore, from \eqref{eq:jkl}
 and \eqref{eq:jkl2} we deduce that
\[
\sigma(x,y)^\delta |F(x,y)|^2
\le
2\sigma(x,y)^\delta |F(x_0,y)|^2
+2K\int_{-1}^1 |F_x(\xi,y)|^2\sigma(\xi,y)^\delta\,{\rm d}\xi.
\]
Integrating in $x$ the above inequality and taking into account that the integral $\int_{-1}^1 \sigma(x,y)^\delta\,{\rm d}x$ is bounded uniformly in $y$, 
we obtain that
\[
\int_{-1}^1 \sigma(x,y)^\delta |F(x,y)|^2\,{\rm d}x
\le
C|F(x_0,y)|^2
+C\int_{-1}^1 |F_x(\xi,y)|^2\sigma(\xi,y)^\delta\,{\rm d}\xi.
\]
Integrating now in $y$, we infer that
\begin{align*}
\iint_Q |F(x,y)|^2\sigma(x,y)^\delta\,{\rm d}x\,{\rm d}y
\le &
C\int_{-1}^1 |F(x_0,y)|^2\,{\rm d}y\\
&+C\iint_Q |F_x(\xi,y)|^2\sigma(\xi,y)^\delta\,{\rm d}\xi\,{\rm d}y
<\infty.
\end{align*}
This proves the claim \eqref{eq:fxyl2}.

Finally, since \eqref{eq:comppon} shows that $\rho_\delta(T(x,y))$ is comparable to $\sigma(x,y)^\delta$ and $T$ has constant Jacobian $J$,
\[
\int_{D_\Theta}|f(s)|^2\rho_\delta(z)\,{\rm d}A(s)
=J\iint_Q |F(x,y)|^2\rho_\delta(T(x,y))\,{\rm d}x\,{\rm d}y
<\infty.
\]
Hence $f\in A_\delta^2(D_\Theta)$.
\end{proof}

Before stating the second result i, this section, we remark that, according to \Cref{de_combinat}, 
$A_\delta^{1,2}(D_\Theta)\subset L^2[0,\pi]$ so that $\int_0^\pi f(x)\,{\rm d}x$
is a well defined quantity. We can thus state the following Poincar\'e type inequality:

\begin{proposition}\label{prop:poincare-Qtheta}
There exists a constant $C>0$, depending only on $\Theta$ and $\delta$, 
such that every $f\in A^{1,2}_\delta(D_\Theta)$  satisfies
\begin{equation}\label{eq:poincaremedie}
\|f\|_{A_\delta^2(D_\Theta)}
\le
C\left(
\|f'\|_{A_\delta^2(D_\Theta)}
+\left|\int_0^\pi f(x)\,{\rm d}x\right|
\right).
\end{equation}
\end{proposition}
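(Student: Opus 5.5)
We argue by contradiction combined with compactness, but a direct constructive route is also available; we propose the constructive one, reusing the machinery of the proof of \Cref{prop:embedding-DTheta}. Transport $f$ to the square $Q=(-1,1)^2$ via the affine map $T$ and set $F=f\circ T$, so that $|F_x|,|F_y|\asymp |f'\circ T|$ and $\rho_\delta\circ T\asymp\sigma^\delta$ by \eqref{eq:comppon}. The proof of \Cref{prop:embedding-DTheta} already gives, quantitatively,
\[
\iint_Q |F|^2\sigma^\delta \leqslant C\int_{-1}^1|F(x_0,y)|^2\,{\rm d}y + C\iint_Q|F_x|^2\sigma^\delta,
\]
and $\int_{-1}^1|F(x_0,y)|^2{\rm d}y\leqslant 4|F(x_0,0)|^2+8\int|F_y(x_0,\cdot)|^2$. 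Rather than choosing a single $x_0$ by Fubini, we average this over $x_0\in(-\tfrac12,\tfrac12)$, where $\sigma\geqslant \tfrac74$. This yields
\[
\|f\|^2_{A_\delta^2(D_\Theta)}\leqslant C\|f'\|^2_{A_\delta^2(D_\Theta)}+C\,|f(s_*)|^2
\]
for a fixed interior point $s_*=T(x_*,0)$. More robustly, we get it with $|f(s_*)|^2$ replaced by $\int_{K}|f|^2\,{\rm d}A$ over a fixed compact disc $K\subset D_\Theta$ centred on the real axis, after averaging over base points in $K$.

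It therefore suffices to control $f$ on a fixed compact subset of $D_\Theta$ by $\|f'\|_{A_\delta^2}$ plus $|\int_0^\pi f|$. Write $f=c+g$ with $c=f(\pi/2)$, so that $g(\pi/2)=0$. On the real segment, $g(x)=\int_{\pi/2}^x f'(t)\,{\rm d}t$. We then use two facts. First, by \Cref{lem_intoarsa} applied to $f'$, together with Cauchy--Schwarz, we have $\int_0^\pi |g(x)|\,{\rm d}x\leqslant C\|f'\|_{A_\delta^2}$. This holds because $\int_0^\pi |f'(t)|\,\min(t,\pi-t)\,{\rm d}t\leqslant C\big(\int t^{1+\delta}(\pi-t)^{1+\delta}|f'|^2\big)^{1/2}\big(\int t^{1-\delta}(\pi-t)^{1-\delta}\big)^{1/2}$, and the last integral is finite. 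Fubini then converts $\int_0^\pi|g|$ into exactly this weighted integral of $|f'|$. Second, on a compact disc $K$ around $\pi/2$, the mean value property gives $\sup_K|f'|\leqslant C\|f'\|_{A_\delta^2}$, since the weight is bounded below there; integrating along segments from $\pi/2$ then gives $\sup_K|g|\leqslant C\|f'\|_{A^2_\delta}$. Hence $|c|\,\pi=\big|\int_0^\pi f-\int_0^\pi g\big|\leqslant |\int_0^\pi f|+C\|f'\|_{A_\delta^2}$, and so $\sup_K|f|\leqslant |c|+\sup_K|g|$ is bounded by the right-hand side of \eqref{eq:poincaremedie}. Combining this with the first paragraph proves \eqref{eq:poincaremedie}.

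The main obstacle is making the first step quantitative with a genuinely interior anchor. The proof of \Cref{prop:embedding-DTheta} integrates $F_y$ along a vertical segment $\{x_0\}\times(-1,1)$ in $Q$, which reaches near the non-singular vertices of $D_\Theta$, but there $\sigma$ is bounded below, so that is harmless. The weight only degenerates near $u=x+y=\pm2$, i.e.\ near $0$ and $\pi$. One has to check that averaging over $x_0\in(-\tfrac12,\tfrac12)$ replaces $|F(x_0,0)|^2$ by $\int_K|f|^2$ with $K=T([-\tfrac12,\tfrac12]\times\{0\})$-neighbourhood, up to the $F_y$ term. If that bookkeeping becomes awkward, we fall back on a compactness argument instead. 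We would suppose $\|f_n\|=1$, $\|f_n'\|\to0$ and $\int f_n\to0$. Then $f_n'\to 0$ locally uniformly, so $f_n-f_n(\pi/2)\to0$ locally uniformly, and by the quantitative estimate the constants $f_n(\pi/2)$ are bounded. A subsequence tends to a constant $c$, and $\int_0^\pi f_n\to0$ forces $c=0$, via the $L^1$ bound on $g_n$ above. This contradicts $\|f_n\|=1$, by the estimate of the first paragraph.
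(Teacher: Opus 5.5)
Your argument is correct, but it is organised differently from the paper's. The paper works in Cartesian coordinates $s=x+iy$. Integrating $f'$ along vertical segments, it bounds $\|f\|^2_{A^2_\delta(D_\Theta)}$ by $\|f'\|^2_{A^2_\delta(D_\Theta)}$ plus the weighted trace norm $\int_0^\pi |f(x)|^2\widetilde\omega(x)\,{\rm d}x$, where $\widetilde\omega(x)\asymp x^{1+\delta}(\pi-x)^{1+\delta}$. It then quotes a one-dimensional weighted Poincar\'e inequality (Chua--Wheeden) for this weight with the Lebesgue mean $\int_0^\pi f$. Finally, it converts $\int_0^\pi|f'|^2\widetilde\omega$ into $\|f'\|^2_{A^2_\delta(D_\Theta)}$ via \Cref{lem_intoarsa}.

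You never pass to an $L^2$ trace of $f$. Instead, the version of the argument of \Cref{prop:embedding-DTheta} averaged over base lines reduces everything to interior control of $f$. Its constants are indeed uniform in $x_0\in(-\frac12,\frac12)$, because the bound on $K(x,y)$ there only uses $(4-t^2)^{-\delta}\in L^1(-2,2)$. The mean $\int_0^\pi f$ then enters only to recover the constant $c=f(\pi/2)$, and for that an $L^1$ Hardy-type bound on $g=f-c$ suffices.

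What your route buys is self-containedness. The external sharp weighted Poincar\'e theorem, whose hypotheses the paper only sketches, is replaced by Fubini, Cauchy--Schwarz and the mean value property. In fact your $L^1$ bound is weak enough to follow from the pointwise estimate $|f'(t)|\leqslant C\min(t,\pi-t)^{-1-\delta/2}\|f'\|_{A^2_\delta(D_\Theta)}$. This estimate comes from the mean value property on the discs $B\bigl(t,\frac12\min(t,\pi-t)\sin\Theta\bigr)\subset D_\Theta$, so you could even avoid \Cref{lem_intoarsa}. The paper's route is shorter, and its structure (area norm, then trace norm, then one-dimensional Poincar\'e) is more transparent.

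Two cosmetic remarks. First, the compactness fallback is unnecessary, since your constructive argument already closes. Second, your $K$ is too small for small angles. The segment $T([-\frac12,\frac12]\times\{0\})$ reaches distance $\frac{\pi}{8\cos\Theta}$ from $\pi/2$, while the largest disc centred at $\pi/2$ inside $D_\Theta$ has radius $\frac{\pi}{2}\sin\Theta$. So for $\Theta<\pi/12$ that segment fits in no such disc. Take $K$ to be a compact convex set such as $T([-\frac12,\frac12]^2)$, or average over a shorter interval of $x_0$. Your mean value and segment-integration steps work verbatim for any compact convex $K\subset D_\Theta$ containing $\pi/2$.
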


\begin{proof}
Write $s=x+iy\in D_\Theta$. Set
\[
\omega(x):=x^\delta(\pi-x)^\delta,
\qquad
\widetilde\omega(x):=a(x)\omega(x),
\qquad 0<x<\pi.
\]
Since for $s\in D_\theta$ we have $|y|<a(x)\le (\tan\Theta)x$ and $|y|<a(x)\le (\tan\Theta)(\pi-x)$, it follows that
\[
x\le |x+iy|\le (\sec\Theta)x,
\qquad
\pi-x\le |\pi-(x+iy)|\le (\sec\Theta)(\pi-x).
\]
Hence,  for $s=x+iy\in D_\Theta$ we have
\begin{equation}\label{eq:weight-comparison}
\rho_\delta(s)\asymp \omega(x),
\end{equation}
with constants depending only on $\Theta$ and $\delta$.

We first reduce the weighted $L^2$ norm on $D_\Theta$ to a weighted trace norm on $(0,\pi)$.  
For $x\in(0,\pi)$ and $|y|<a(x)$, the fundamental theorem of calculus in the vertical direction gives
\[
f(x+iy)=f(x)+\int_0^y \partial_y f(x+it)\,{\rm d}t.
\]
Since $f$ is holomorphic, $\partial_y f=i f'$, and therefore
\[
|f(x+iy)|^2
\le
2|f(x)|^2+2\left|\int_0^y f'(x+it)\,{\rm d}t\right|^2.
\]
Integrating in $y\in(-a(x),a(x))$ and applying Cauchy--Schwarz inequality, we obtain
that for every $x\in (0,\pi)$ we have
\[
\int_{-a(x)}^{a(x)} |f(x+iy)|^2\,{\rm d}y
\le
2a(x)|f(x)|^2
+2a^2(x)\int_{-a(x)}^{a(x)} |f'(x+it)|^2\,{\rm d}t.
\]
Multiplying by $\omega(x)$, integrating in $x$, and using the boundedness of $a$, we get
\begin{align*}
\int_0^\pi\int_{-a(x)}^{a(x)} |f(x+iy)|^2\omega(x)\,{\rm d}y\,{\rm d}x
&\le
C\int_0^\pi |f(x)|^2\widetilde\omega(x)\,{\rm d}x \notag\\
&\quad + C\int_0^\pi\int_{-a(x)}^{a(x)} |f'(x+iy)|^2\omega(x)\,{\rm d}y\,{\rm d}x.
\end{align*}
By \eqref{eq:weight-comparison}, the above inequality  gives
\begin{multline}\label{eq:reduce-to-trace-rho}
\int_{D_\Theta}|f(s)|^2\rho_\delta(s)\, {\rm d}A(s)\\
\le
C\int_0^\pi |f(x)|^2\widetilde\omega(x)\,{\rm d}x
+C\int_{D_\Theta}|f'(s)|^2\rho_\delta(s)\,{\rm d}A(s).
\end{multline}
Thus it remains to estimate the one-dimensional weighted norm of $f$ appearing in the right hand side term of \eqref{eq:reduce-to-trace-rho}. Indeed, we recall that there exists $C>0$, depending only on $\Theta$ and $\delta$, such that for any absolutely continuous function $g$ defined on $(0,\pi)$ we have that 
\begin{equation}\label{eq:trace-poincare}
\int_0^\pi |g(x)|^2\widetilde\omega(x)\,{\rm d}x
\le
C\left(
\int_0^\pi |g'(x)|^2\widetilde\omega(x)\,{\rm d}x
+\left|\int_0^\pi g(x)\,{\rm d}x\right|^2
\right).
\end{equation}
Inequality \eqref{eq:trace-poincare} is a particular case of the one-dimensional weighted Poincar\'e inequality
proved by Chua--Wheeden \cite[Theorem~1.4]{ChuaWheeden2000}, applied with
$a=0$, $b=\pi$, $p=q=2$, $v=dx$, and $\mu=w=\widetilde\omega$.
The required hypothesis reduces to the finiteness of the associated Hardy-type
quantities, which follows immediately from the definition of $\widetilde\omega$. 

Now we are able to complete the proof of  \Cref{prop:poincare-Qtheta}. Since $$\widetilde{\omega}(x)\asymp x^{1+\delta} (\pi-x)^{1+\delta} \qquad (0<x<\pi),$$ we can apply \Cref{lem_intoarsa} to deduce that 
$$\int_0^\pi |f'(x)|^2\widetilde\omega(x)\,{\rm d}x \le
C \|f'\|^2_{A^2_\delta(D_\Theta)}.
$$
Applying  \eqref{eq:trace-poincare} to $g(x)=f(x)$, from the above inequality we obtain that \Cref{eq:poincaremedie} holds and the proof of proposition is complete.
\end{proof}

\begin{remark}\label{rem_fonala_foarte}
 By combining the above proposition and \Cref{de_combinat} it follows that
\[
f\longmapsto \|f'\|_{A_\delta^2(D_\Theta)}+\|f\|_{L^2[0,\pi]}
\]
is a norm on $A_\delta^{1,2}(D_\Theta)$ equivalent to the original norm
\[
f\longmapsto \|f\|_{A_\delta^2(D_\Theta)}+\|f'\|_{A_\delta^2(D_\Theta)}.
\]
\end{remark}

{\bf Acknowledgments.} The authors thank George Weiss and Sylvain Ervedoza for the careful reading of the manuscript and 
for their helpful remarks.

\bibliographystyle{alpha}
\bibliography{biblio}

@misc{ervedoza2025reachablespaceparabolicequations,
      title={On the reachable space for parabolic equations}, 
      author={Sylvain Ervedoza and Adrien Tendani-Soler},
      year={2025},
      eprint={2507.15407},
      archivePrefix={arXiv},
      primaryClass={math.AP},
      url={https://arxiv.org/abs/2507.15407}, 
}

@article{ChuaWheeden2000,
  author  = {Chua, Seng-Kee and Wheeden, Richard L.},
  title   = {Sharp conditions for weighted 1-dimensional Poincar\'e inequalities},
  journal = {Indiana University Mathematics Journal},
  volume  = {49},
  number  = {1},
  year    = {2000},
  pages   = {143--175},
  doi     = {10.1512/iumj.2000.49.1754},
}

@book{KufnerPersson2003,
  author    = {Kufner, Alois and Persson, Lars-Erik},
  title     = {Weighted Inequalities of Hardy Type},
  series    = {Series in Approximations and Decompositions},
  volume    = {31},
  publisher = {World Scientific},
  address   = {River Edge, NJ},
  year      = {2003},
  isbn      = {981-238-324-7}
}

@book{OpicKufner1990,
  author    = {Opic, Bohum{\'\i}r and Kufner, Alois},
  title     = {Hardy-type Inequalities},
  series    = {Pitman Research Notes in Mathematics Series},
  volume    = {219},
  publisher = {Longman Scientific \& Technical},
  address   = {Harlow},
  year      = {1990},
  isbn      = {0-582-04021-0}
}

@article{weiss1989admissibility_con,
  title={Admissibility of unbounded control operators},
  author={Weiss, George},
  journal={SIAM Journal on Control and Optimization},
  volume={27},
  number={3},
  pages={527--545},
  year={1989},
  publisher={SIAM}
}

@article {FatRus2,
    AUTHOR = {Fattorini, H. O. and Russell, D. L.},
     TITLE = {Exact controllability theorems for linear parabolic
             equations in one space dimension},
   JOURNAL = {Arch. Rational Mech. Anal.},
  FJOURNAL = {Archive for Rational Mechanics and Analysis},
    VOLUME = {43},
      YEAR = {1971},
     PAGES = {272--292},
      ISSN = {0003-9527},
MRREVIEWER = {F. M. Kirillova},
}

@article {Seid79,
    AUTHOR = {Seidman, Thomas I.},
     TITLE = {Time-invariance of the reachable set for linear control
              problems},
   JOURNAL = {J. Math. Anal. Appl.},
  FJOURNAL = {Journal of Mathematical Analysis and Applications},
    VOLUME = {72},
      YEAR = {1979},
    NUMBER = {1},
     PAGES = {17--20},
      ISSN = {0022-247X},
       DOI = {10.1016/0022-247X(79)90271-3},
}

@article{fatt78,
  title={Reachable states in boundary control of the heat equation are independent of time},
  author={Fattorini, HO},
  journal={Proceedings of the Royal Society of Edinburgh Section A: Mathematics},
  volume={81},
  number={1-2},
  pages={71--77},
  year={1978},
  publisher={Royal Society of Edinburgh Scotland Foundation}
}

@article{weiss1989admissible,
  title={Admissible observation operators for linear semigroups},
  author={Weiss, George},
  journal={Israel Journal of Mathematics},
  volume={65},
  number={1},
  pages={17--43},
  year={1989},
  publisher={Springer}
}

@book{EN06,
  title={A short course on operator semigroups},
  author={Engel, Klaus-Jochen and Nagel, Rainer},
  year={2006},
  publisher={Springer}
}

@book{duren2024bergman,
  title={Bergman spaces},
  author={Duren, Peter and Schuster, Alexander},
  volume={100},
  year={2024},
  publisher={American Mathematical Society}
}

@article{hartmann2021separation,
  title={Separation of singularities for the {B}ergman space and application to control theory},
  author={Hartmann, Andreas and Orsoni, Marcu-Antone},
  journal={Journal de Math{\'e}matiques Pures et Appliqu{\'e}es},
  volume={150},
  pages={181--201},
  year={2021},
  publisher={Elsevier}
}

@article{fkirine2024evolution,
  title={On evolution equations with white-noise boundary conditions},
  author={Fkirine, Mohamed and Hadd, Said and Rhandi, Abdelaziz},
  journal={Journal of Mathematical Analysis and Applications},
  volume={535},
  number={1},
  pages={128087},
  year={2024},
  publisher={Elsevier}
}

@article {MR1953738,
    AUTHOR = {Al\`os, Elisa and Bonaccorsi, Stefano},
     TITLE = {Stability for stochastic partial differential equations with
              {D}irichlet white-noise boundary conditions},
   JOURNAL = {Infin. Dimens. Anal. Quantum Probab. Relat. Top.},
  FJOURNAL = {Infinite Dimensional Analysis, Quantum Probability and Related
              Topics},
    VOLUME = {5},
      YEAR = {2002},
    NUMBER = {4},
     PAGES = {465--481},
      ISSN = {0219-0257},
       DOI = {10.1142/S0219025702000948},
       URL = {https://doi.org/10.1142/S0219025702000948},
}

@article {MR1899108,
    AUTHOR = {Al\`os, Elisa and Bonaccorsi, Stefano},
     TITLE = {Stochastic partial differential equations with {D}irichlet
              white-noise boundary conditions},
   JOURNAL = {Ann. Inst. H. Poincar\'{e} Probab. Statist.},
  FJOURNAL = {Annales de l'Institut Henri Poincar\'{e}. Probabilit\'{e}s et
              Statistiques},
    VOLUME = {38},
      YEAR = {2002},
    NUMBER = {2},
     PAGES = {125--154},
      ISSN = {0246-0203},
       DOI = {10.1016/S0246-0203(01)01097-4},
       URL = {https://doi.org/10.1016/S0246-0203(01)01097-4},
}

@article {MR3315663,
    AUTHOR = {Brze\'{z}niak, Zdzis\l  and Goldys, Ben and Peszat, Szymon and
              Russo, Francesco},
     TITLE = {Second order {PDE}s with {D}irichlet white noise boundary
              conditions},
   JOURNAL = {J. Evol. Equ.},
  FJOURNAL = {Journal of Evolution Equations},
    VOLUME = {15},
      YEAR = {2015},
    NUMBER = {1},
     PAGES = {1--26},
      ISSN = {1424-3199},
       DOI = {10.1007/s00028-014-0246-2},
       URL = {https://doi.org/10.1007/s00028-014-0246-2},
}

@article {MR4561684,
    AUTHOR = {Goldys, Ben and Peszat, Szymon},
     TITLE = {Linear parabolic equation with {D}irichlet white noise
              boundary conditions},
   JOURNAL = {J. Differential Equations},
  FJOURNAL = {Journal of Differential Equations},
    VOLUME = {362},
      YEAR = {2023},
     PAGES = {382--437},
      ISSN = {0022-0396},
       DOI = {10.1016/j.jde.2023.03.003},
       URL = {https://doi.org/10.1016/j.jde.2023.03.003},
}

@book {MR747979,
    AUTHOR = {Cannon, John Rozier},
     TITLE = {The one-dimensional heat equation},
    SERIES = {Encyclopedia of Mathematics and its Applications},
    VOLUME = {23},
 PUBLISHER = {Addison-Wesley Publishing Company, Advanced Book Program,
              Reading, MA},
      YEAR = {1984},
     PAGES = {xxv+483},
      ISBN = {0-201-13522-1},
       DOI = {10.1017/CBO9781139086967},
       URL = {https://doi.org/10.1017/CBO9781139086967},
}

@article {MR4474841,
    AUTHOR = {Ervedoza, Sylvain and Le Balc'h, K\'{e}vin and Tucsnak, Marius},
     TITLE = {Reachability results for perturbed heat equations},
   JOURNAL = {J. Funct. Anal.},
  FJOURNAL = {Journal of Functional Analysis},
    VOLUME = {283},
      YEAR = {2022},
    NUMBER = {10},
     PAGES = {Paper No. 109666, 61},
      ISSN = {0022-1236},
       DOI = {10.1016/j.jfa.2022.109666},
       URL = {https://doi.org/10.1016/j.jfa.2022.109666},
}

@book {TucsnakWeiss,
    AUTHOR = {Tucsnak, Marius and Weiss, George},
     TITLE = {Observation and control for operator semigroups},
    SERIES = {Birkh\"auser Advanced Texts: Basler Lehrb\"ucher.
              [Birkh\"auser Advanced Texts: Basel Textbooks]},
 PUBLISHER = {Birkh\"auser Verlag, Basel},
      YEAR = {2009},
     PAGES = {xii+483},
      ISBN = {978-3-7643-8993-2},
       URL = {http://dx.doi.org/10.1007/978-3-7643-8994-9},
}

@article {DAP_Z_Main,
    AUTHOR = {Da Prato, G. and Zabczyk, J.},
     TITLE = {Evolution equations with white-noise boundary conditions},
   JOURNAL = {Stochastics Stochastics Rep.},
  FJOURNAL = {Stochastics and Stochastics Reports},
    VOLUME = {42},
      YEAR = {1993},
    NUMBER = {3-4},
     PAGES = {167--182},
      ISSN = {1045-1129},
       DOI = {10.1080/17442509308833817},
       URL = {https://doi-org.docelec.u-bordeaux.fr/10.1080/17442509308833817},
}

@book {DAP_Z_Book,
    AUTHOR = {Da Prato, Giuseppe and Zabczyk, Jerzy},
     TITLE = {Stochastic equations in infinite dimensions},
    SERIES = {Encyclopedia of Mathematics and its Applications},
    VOLUME = {152},
   EDITION = {Second},
 PUBLISHER = {Cambridge University Press, Cambridge},
      YEAR = {2014},
     PAGES = {xviii+493},
      ISBN = {978-1-107-05584-1},
       DOI = {10.1017/CBO9781107295513},
       URL = {https://doi-org.docelec.u-bordeaux.fr/10.1017/CBO9781107295513},
}

@misc{hairer2023,
      title={An {I}ntroduction to {S}tochastic {PDE}s}, 
      author={Martin Hairer},
      year={2023},
      eprint={0907.4178},
      archivePrefix={arXiv},
      primaryClass={math.PR},
      url={https://arxiv.org/abs/0907.4178}, 
}

@article{abreu2013stochastic,
  title={The stochastic {W}eiss conjecture for bounded analytic semigroups},
  author={Abreu, Jamil and Haak, Bernhard and Van Neerven, Jan},
  journal={Journal of the London Mathematical Society},
  volume={88},
  number={1},
  pages={181--201},
  year={2013},
  publisher={Oxford University Press}
}

@article {Saitoh,
    AUTHOR = {Aikawa, Hiroaki and Hayashi, Nakao and Saitoh, Saburou},
     TITLE = {The {B}ergman space on a sector and the heat equation},
   JOURNAL = {Complex Variables Theory Appl.},
  FJOURNAL = {Complex Variables. Theory and Application. An International
              Journal},
    VOLUME = {15},
      YEAR = {1990},
    NUMBER = {1},
     PAGES = {27--36},
      ISSN = {0278-1077},
}

@article {HKT_2020,
    AUTHOR = {Hartmann, Andreas and Kellay, Karim and Tucsnak, Marius},
     TITLE = {From the reachable space of the heat equation to {H}ilbert
              spaces of holomorphic functions},
   JOURNAL = {J. Eur. Math. Soc. (JEMS)},
  FJOURNAL = {Journal of the European Mathematical Society (JEMS)},
    VOLUME = {22},
      YEAR = {2020},
    NUMBER = {10},
     PAGES = {3417--3440},
      ISSN = {1435-9855},
       DOI = {10.4171/jems/989},
       URL = {https://doi.org/10.4171/jems/989},
}

\end{document}